\documentclass[final,1p,times,numbers,sort&compress]{elsarticle}
\usepackage{txfonts}
\usepackage{latexsym}
\usepackage{enumerate}
\usepackage{amsmath,amsfonts,amscd}
\usepackage{amssymb,amsthm}
\usepackage{float}
\usepackage{mathrsfs}
\usepackage{multirow}
\usepackage{graphics}

\usepackage{subfigure}
\usepackage{epsfig}
\usepackage{booktabs}
\usepackage{epstopdf}

\numberwithin{equation}{section}

\newtheorem{theorem}{Theorem}[section]

\newtheorem{lemma}[theorem]{Lemma}
\newtheorem{definition}[theorem]{Definition}

\newtheorem{remark}[theorem]{Remark}

\journal{$\ast\ast\ast$}
\begin{document}

\begin{frontmatter}

\title{Normalized solution to the nonlinear p-Laplacian equation with an $L^2$ constrain: mass supercritical case}

\author{Yulu Tian, Deng-Shan Wang, Liang Zhao}

\begin{abstract}
\noindent
In this paper, we study the existence of ground state solutions to the following p-Laplacian equation in some dimension $N\geq3$ with an $L^2$ constraint:
\begin{equation*}
     \begin{cases}
         -\Delta_{p}u+{\vert u\vert}^{p-2}u=f(u)-\mu u \quad \text{ in } \mathbb{R}^N,\\
         {\Vert u\Vert}^2_{L^2(\mathbb{R}^N)}=m,\\
         u\in W^{1,p}(\mathbb{R}^N)\cap L^2(\mathbb{R}^N),
     \end{cases}
\end{equation*}
where $-\Delta_{p}u=div\left( {\vert\nabla u\vert}^{p-2}\nabla u \right)$, $2\leq p<N$, $f\in C(\mathbb{R},\mathbb{R})$, $m>0$, $\mu\in\mathbb{R}$ will appear as a Lagrange multiplier and the continuous nonlinearity $f$ satisfies mass supercritical conditions. We mainly study the behavior of ground state energy $E_m$ with $m>0$ changing within a certain range and aim at extending nonlinear scalar field equation when $p=2$ and reducing the constraint condition of nonlinearity $f$.
\end{abstract}

\begin{keyword}
normalized solutions\sep p-Laplacian equations\sep ground states
\end{keyword}

\end{frontmatter}

\section{Introduction}
In present paper, what we are concerned about is the following p-Laplacian equation with an $L^2$ constraint:
\begin{equation}\label{(P_m)}
     \begin{cases}
         -\Delta_{p}u+{\vert u\vert}^{p-2}u=f(u)-\mu u \quad \text{ in } \mathbb{R}^N,\\
         {\Vert u\Vert}^2_{L^2(\mathbb{R}^N)}=m,\\
         u\in W^{1,p}(\mathbb{R}^N)\cap L^2(\mathbb{R}^N),
     \end{cases}
\end{equation}
where $-\Delta_{p}u=div\left( {\vert\nabla u\vert}^{p-2}\nabla u \right)$ is p-Laplacian operator $2\leq p<N$, $f\in C(\mathbb{R},\mathbb{R})$, $m>0$ is a prescribed mass and $\mu\in\mathbb{R}$ will appear as a Lagrange multiplier. In particular, choosing $p=2$ and $\mu_0=\mu+1$, we obtain following nonlinear scalar field equation
\begin{equation}\label{p=2}
-\Delta u=f(u)-\mu_0 u \quad \text{ in }\mathbb{R}^N.
\end{equation}
It is worth pointing out that \eqref{p=2} closely related to a function $\psi=\psi(t,x):\mathbb{R}^+\times\mathbb{R}^N\to\mathbb{C}$, which satisfies following nonlinear Schr\"{o}dinger equation
\begin{equation}\label{NLS}
i\psi_t+\Delta\psi+g(\vert\psi\vert^2)\psi=0\text{ in }\mathbb{R}^+\times\mathbb{R}^N.
\end{equation}
This type of equation are derived as models of several physical phenomena, such as non-Newtonian fluids, dilatant fluids, electromagnetic fields and reaction diffusions. Readers may refer to \cite{Diaz1985,DiazSIAM1994} for more physical background.

Clearly,  $\psi(x,t)=e^{i\lambda t}u(x)$ is a solution to \eqref{NLS} if and only if $u(x)$ is a solution to \eqref{p=2} with $f(u)=g(\vert u\vert^2)u$. Hence, the solution of nonlinear Schr\"{o}dinger equation is transformed into the solution of the nonlinear scalar field equation \eqref{p=2}. From a physical point of view, variational method seems particularly meaningful, since the $L^2$ constraint is a reserved quantity of evolution. Moreover, the variational characteristics of such solutions are usually helpful to analyze their orbital stability \cite{BellazziniPLMS2013,CazenaveCMP1982,SoaveJDE2020,SoaveJFA2020}.

L. Jeanjean et al. first studied the mass supercritical case of \eqref{p=2} in the pioneer work \cite{JeanjeanNA1997} and proved that the relevant functional possesses the mountain pass geometric structure. Moreover, they further obtained the normalized solution to  \eqref{p=2} by a skillful compactness argument and the minimax approach. Subsequently, many researchers begin to study the normalized solutions of models closely related to \eqref{p=2}. Please turn to \cite{YeMMAS2015,YeZMP2016,Kong2022} for Kirchhoff type equations, \cite{ColinNA2004,ColinNon2010,YeZMP2021} for quasi-linear Schr\"{o}dinger equation, \cite{LiJFPTA2021,BartschJMPA2016,BartschJFA2017} for Schr\"{o}dinger systems and \cite{SoaveJDE2020,SoaveJFA2020,JeanjeanJMPA2022,JeanjeanJMPA2017} for combined nonlinearities. More results concerning normalized problems can be found in above papers and the references therein.

 Our aim is to study the existence of the ground state solutions of p-Laplacian equation \eqref{(P_m)} by using the methods inspired by \cite{JeanjeanCVPDE2020},  which constructs normalized solutions for the nonlinear scalar field equation with an $L^2$ constraint. The innovation of this paper is to generalize the Laplace operator and give the mass supercritical growth condition of the more general nonlinear term $f$ corresponding to the p-Laplacian operator. Thus, more difficulties will arise when dealing with the relationship between $p$ and $N$ (see Lemmas \ref{lem2.5} and  \ref{lem4.6} for more details).

For some suitable nonlinearities $f$, we can define the following energy functional
\begin{equation*}
I(u)=\frac{1}{p}\int_{\mathbb{R}^N}\left( {\vert\nabla u\vert}^p+{\vert u\vert}^p\right)dx-\int_{\mathbb{R}^N}F(u)dx,
\end{equation*}
which corresponds to \eqref{(P_m)} on $\mathscr{X}:= W^{1,p}(\mathbb{R}^N)\cap L^2(\mathbb{R}^N)$ with $F(t):=\int^t_0f(s)ds$ for  $t\in\mathbb{R}$. The norm of $\mathscr{X}$ is defined by
\begin{equation*}
{\Vert u\Vert}_\mathscr{X}:={\Vert u\Vert}_{W^{1,p}(\mathbb{R}^N)}+{\Vert u\Vert}_{L^2(\mathbb{R}^N)}=\left(\int_{\mathbb{R}^N}({\vert\nabla u\vert}^p+{\vert u\vert}^p)dx\right)^\frac{1}{p}+\left(\int_{\mathbb{R}^N}{\vert u\vert}^2dx\right)^{\frac{1}{2}}.
\end{equation*}

Motivated by the pioneer work \cite{JeanjeanNA1997} and a further work \cite{JeanjeanCVPDE2020} that focused on the supercritical case, we make following assumptions on nonlinearity $f$:

$(f0)\ \ f:\mathbb{R}\to\mathbb{R}$ is continuous;

$(f1)\ \ \lim\limits_{t\to0} \frac{f(t)}{{\vert t\vert}^{p-1+\frac{2p}{N}}}=0$;

$(f2)\ \ \lim\limits_{t\to\infty}\frac{f(t)}{{\vert t\vert}^{\frac{Np-N+p}{N-p}}}=0$;

$(f3)\ \ \lim\limits_{t\to\infty}\frac{F(t)}{{\vert t\vert}^{p+\frac{2p}{N}}}=+\infty$;

$(f4)\ \ t\mapsto\frac{\widetilde{F}(t)}{{\vert t\vert}^{p+\frac{2p}{N}}}$ is strictly
decreasing on $(-\infty,0)$ and strictly increasing on $(0,+\infty)$ with $\widetilde{F}(t):=f(t)t-2F(t)$.

For any given $m>0$, set $S_m:= \left\{u\in\mathscr{X}:{\Vert u\Vert}^2_{L^2(\mathbb{R}^N)}=m\right\}$. Consider the Pohozaev manifold
\begin{equation*}
     \mathcal{P}_m:=\left\{u\in S_m:P(u)=0\right\},
\end{equation*}
where $P(u)$ is the Pohozaev functional defined by
\begin{equation*}
P(u):=\frac{N(p-2)+2p}{2p}{\Vert\nabla u\Vert}^p_{L^p(\mathbb{R}^N)}+\frac{N(p-2)}{2p}{\Vert u\Vert}^p_{L^p(\mathbb{R}^N)}-\frac{N}{2}\int_{\mathbb{R}^N}\widetilde{F}(u)dx=0.
\end{equation*}
Then the ground state energy is given by $E_m:=\inf_{u\in\mathcal{P}_m}I(u)$.

To simplify the notation, set $p_*:=p+\frac{2p}{N}$, $p_c:=\frac{N(p-2)}{2}$ and $p^*:=\frac{Np}{N-p}$. To ensure that the Lagrange mutipliers are positive, we also provide the following condition:

$(f5)\ \ \text{There exists some } \gamma\in(p_*,p^*) \text{ such that } f(t)t\leq\gamma F(t)$ for all $t\in\mathbb{R}\setminus\{0\}$.

As an example for the nonlinearity which satisfies $(f0)-(f5)$, we have following odd function
\begin{equation*}
f(t):=\left[p_*\ln(1+\vert t\vert^\alpha)+\frac{\alpha\vert t\vert^\alpha}{1+\vert t\vert^\alpha}\right]\vert t\vert^{p_*-2}t,
\end{equation*}
and the primitive function is $F(t)=\vert t\vert^{p_*}\ln(1+\vert t\vert^\alpha)$, where $\alpha=\frac{1}{2}\left(\gamma-p_*\right)$.

Based on above conditions, our main results can be presented as follows.

\begin{theorem}\label{theo1.1}
Let $N\geq3$ and $f$ satisfy $(f0)-(f5)$. Then there exists $m_0>0$ small enough such that
\eqref{(P_m)} admits a ground state for any $m\in(0,m_0)$. Furthermore, if $f$ is an odd function, any ground state of \eqref{(P_m)} is non-negative for any $m\in(0,m_0)$. In both cases, the associated Lagrange multiplier $\mu$ is positive for any ground state.
\end{theorem}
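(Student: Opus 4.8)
The plan is to obtain the ground state as a minimizer of $I$ on the Pohozaev manifold $\mathcal{P}_m$, to show this minimizer solves \eqref{(P_m)}, and finally to pin down the sign of $\mu$. First I would analyse, for fixed $u\in S_m$, the fiber map $t\mapsto I(t\star u)$ associated with the $L^2$-preserving dilation $(t\star u)(x):=e^{Nt/2}u(e^{t}x)$, which satisfies $\frac{d}{dt}I(t\star u)=P(t\star u)$; in particular $t\star u\in\mathcal{P}_m$ exactly at the critical points of this map. Conditions $(f1)$--$(f3)$ give the mass-supercritical geometry: $I(t\star u)$ stays bounded and non-negative as $t\to-\infty$ and tends to $-\infty$ as $t\to+\infty$, because the nonlinear term and the gradient term carry the same dilation exponent $\tfrac{N(p-2)+2p}{2}$ while $(f3)$ makes the former dominate. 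The monotonicity in $(f4)$ then forces a unique critical point, a strict global maximum, so every ray meets $\mathcal{P}_m$ exactly once, $\mathcal{P}_m$ is a smooth natural constraint, and $E_m=\inf_{u\in\mathcal{P}_m}I(u)=\inf_{v\in S_m}\max_{t\in\mathbb{R}}I(t\star v)$ is a mountain-pass level.

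Next I would produce a minimizer. Starting from a minimizing sequence and using that Schwarz symmetrization commutes with $\star$ and does not increase $I$ (P\'olya--Szeg\H{o}), I may assume the sequence $(u_n)\subset\mathcal{P}_m$ is radially symmetric. From $P(u_n)=0$ together with $I(u_n)\to E_m$ I would deduce a uniform bound in $\mathscr{X}$; controlling the nonlinear integrals here is exactly where the relation between $p$ and $N$ enters (cf. Lemmas \ref{lem2.5} and \ref{lem4.6}), since $(f1)$--$(f2)$ place the growth of $F$ strictly between the exponents $p_*$ and $p^*$. The compact radial embedding $W^{1,p}_{\mathrm{rad}}(\mathbb{R}^N)\hookrightarrow L^{q}(\mathbb{R}^N)$ for $p<q<p^*$ then yields $\int F(u_n)\to\int F(u)$ and $\int\widetilde{F}(u_n)\to\int\widetilde{F}(u)$ for the weak limit $u$; since one checks $E_m>0$, the limit is nontrivial. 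The one genuinely delicate point is that $L^2$ lies outside this compact range, so $L^2$-mass could escape and leave $\|u\|_{L^2}^2<m$; this is excluded for $m<m_0$ by the strict monotonicity of $m\mapsto E_m$ (a rescaling to $S_m$ would otherwise strictly lower the energy, a contradiction). Thus $u\in\mathcal{P}_m$ attains $E_m$, and by the natural-constraint property it is a critical point of $I|_{S_m}$, hence a weak solution of \eqref{(P_m)} with some $\mu\in\mathbb{R}$.

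To fix the sign of $\mu$ I would use two identities: testing the equation with $u$ gives $\|\nabla u\|_{L^p}^p+\|u\|_{L^p}^p+\mu m=\int_{\mathbb{R}^N}f(u)u\,dx$, while $P(u)=0$ gives the Pohozaev relation; eliminating $\int f(u)u$ and $\int F(u)$ between them and inserting the bound $\int f(u)u\,dx\le\gamma\int F(u)\,dx$ from $(f5)$ produces an estimate of the shape $\mu m\ge\kappa_A\|\nabla u\|_{L^p}^p-\kappa_B\|u\|_{L^p}^p$ with $\kappa_A,\kappa_B>0$. The constant $\kappa_A$ is positive precisely because $\gamma<p^*$ (this threshold is sharp in the computation), whereas the lower-order term $\|u\|_{L^p}^p$ carries the wrong sign; I would absorb it through the Gagliardo--Nirenberg interpolation $\|u\|_{L^p}^p\lesssim m^{p(1-\theta)/2}\|\nabla u\|_{L^p}^{p\theta}$, so that for $m$ small the gradient term dominates and $\mu>0$. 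As a cross-check I would record the clean identity $I(u)=\tfrac1N\|\nabla u\|_{L^p}^p-\tfrac{\mu m}{2}$, valid on any solution.

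Finally, when $f$ is odd, $F$ and $\widetilde{F}$ are even, so $|u|\in S_m$ has the same energy and the same value of $P$ as $u$; hence $|u|$ is again a minimizer and a weak solution, and the strong maximum principle for the $p$-Laplacian (V\'azquez) gives $|u|>0$, which forces any ground state to be of constant sign, i.e. non-negative after the symmetry $u\mapsto-u$. The main obstacle in the whole scheme is compactness: because $\mathscr{X}=W^{1,p}(\mathbb{R}^N)\cap L^2(\mathbb{R}^N)$ mixes two incompatible scalings, both the uniform bound on minimizing sequences and the exclusion of $L^2$-mass loss are delicate and rely on the precise interplay of $p$ and $N$ and on the smallness of $m$; by comparison the fiber-map geometry and the sign of $\mu$ are comparatively routine once $(f3)$--$(f5)$ are available.
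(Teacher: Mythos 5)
Your fiber-map analysis, your mountain-pass characterization of $E_m$, and your sign-of-$\mu$ computation (eliminating $\int f(u)u$ between the Nehari and Pohozaev identities, using $\gamma<p^*$ from $(f5)$ and Gagliardo--Nirenberg for small mass) all match the paper, the last being essentially Lemma~\ref{lemma_xin}. The genuine problem is your compactness mechanism. Schwarz symmetrization controls the gradient term by P\'olya--Szeg\H{o}, but for the nonlinear term equimeasurability only gives $\int_{\mathbb{R}^N}F(u^*)\,dx=\int_{\mathbb{R}^N}F(|u|)\,dx$, which has no relation to $\int_{\mathbb{R}^N}F(u)\,dx$ unless $F$ is even, i.e.\ essentially unless $f$ is odd. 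Theorem~\ref{theo1.1}'s existence statement assumes only $(f0)$--$(f5)$, with no oddness, so your symmetrized sequence need not be minimizing and the whole radial reduction collapses. This is exactly why the paper stays in the non-radial setting: it produces a Palais--Smale sequence inside $\mathcal{P}_m$ by a Ghoussoub minimax applied to the functional $\Phi(u)=I(s(u)\star u)$ (Lemmas~\ref{lem4.1}--\ref{lem4.5}) and recovers compactness through a translation/splitting argument (Lemma~\ref{lem4.6}). There, mass loss is excluded by the mechanism you sketch, but in the correct logical order: strict monotonicity of $m\mapsto E_m$ is \emph{not} available a priori (Lemma~\ref{lem3.2} gives only non-increasing); one first shows the limit profile $w^1$ is a minimizer at its own mass $z\leq m$ solving the equation, then gets $\mu>0$ for small mass from Lemma~\ref{lemma_xin}, and only then invokes Lemma~\ref{lem3.4} to conclude $E_z>E_m$ if $z<m$, a contradiction. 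Your phrasing ("excluded by the strict monotonicity") presupposes what must be extracted from the limit profile itself.

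Two further steps fail as written. First, "by the natural-constraint property it is a critical point of $I|_{S_m}$": since $f$ is only continuous, $\widetilde{F}(t)=f(t)t-2F(t)$ need not be $C^1$, so $P$ is not a $C^1$ functional and $\mathcal{P}_m$ is not a $C^1$ manifold; the Lagrange multiplier rule cannot be applied on it directly. The paper's substitute is precisely that $\Phi$ \emph{is} $C^1$ despite $P$ not being so (Lemma~\ref{lem4.2}), so that criticality can be transferred from $J=\Phi|_{S_m}$ to $I|_{S_m}$ via Lemma~\ref{lem4.3} and Lemma~\ref{Lemma 3}; your argument needs this detour or a smoothness hypothesis on $f$ that is not assumed. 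Second, the V\'azquez strong maximum principle does not apply when $p>2$: for $v=|u|$ the absorption term is $\beta(s)=s^{p-1}+\mu s-f(s)\sim\mu s$ near $0$ (note $f(s)=o(s)$ by $(f1)$, since $p-1+\tfrac{2p}{N}>1$), and V\'azquez's condition $\int_0(s\beta(s))^{-1/p}\,ds=+\infty$ fails for $\beta(s)\sim s$ when $p>2$; equations of the form $-\Delta_p v+\mu v=(\text{lower order})$ genuinely admit dead cores in this range, so "$|u|>0$" is unjustified and the constant-sign conclusion does not follow. The paper avoids the maximum principle altogether: when $f$ is odd it selects the Palais--Smale sequence so that ${\Vert u_n^-\Vert}_{L^2(\mathbb{R}^N)}\to0$ (Lemma~\ref{lem4.5}), and the strong limit is then non-negative by construction.
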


\begin{remark}
We would like to give some further explanations of Theorem \ref{theo1.1}.

(i) Since $p_*<p^*$, we have $p>\frac{2N}{N+2}$, which implies that $N\geq3$. For $p=2$, the range of $N$ can be extended to $[1,\infty)$ (see \cite{JeanjeanCVPDE2020}).

(ii) For the sake of mathematical technique (see Lemma \ref{Lemma 3}), we focus on \eqref{(P_m)} with the condition ${\Vert u\Vert}^2_{L^2(\mathbb{R}^N)}=m$ instead of ${\Vert u\Vert}^p_{L^p(\mathbb{R}^N)}=m$ for any given $m>0$.
\end{remark}

The changing behavior of the ground state energy $E_m$, such as continuity and monotonicity, plays an important role in the proof of Theorem \ref{theo1.1}. Here are some results of the ground state energy $E_m$.

\begin{theorem}\label{theo1.2}
Let $N\geq3$ and $f$ satisfy $(f0)-(f4)$. Then the function $m\mapsto E_m$ is continuous, nonincreasing with $\lim_{m\to0^+}E_m=+\infty$. Moreover, if $f$ also satisfies $(f5)$, then $E_m$ is strictly decreasing with respect to $m$ in $(0,\infty)$.
\end{theorem}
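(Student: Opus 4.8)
The plan is to build everything on a fibering/scaling framework and then dispatch the three assertions in turn. For $u\in\mathscr{X}$ and $s\in\mathbb R$ I would introduce the $L^2$-preserving dilation $(s\star u)(x):=e^{Ns/2}u(e^sx)$, so that $s\star u\in S_m$ whenever $u\in S_m$. A direct computation gives
\[
\|\nabla(s\star u)\|_{L^p}^p=e^{\frac{N(p-2)+2p}{2}s}\|\nabla u\|_{L^p}^p,\qquad \|s\star u\|_{L^p}^p=e^{\frac{N(p-2)}{2}s}\|u\|_{L^p}^p,
\]
and, since $\frac{d}{ds}\int F(s\star u)\,dx=\frac N2\int\widetilde F(s\star u)\,dx$, the crucial identity $\frac{d}{ds}I(s\star u)=P(s\star u)$. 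Using $(f1)$–$(f4)$ one checks (this is the content of the earlier fibering lemmas) that for each $u\in S_m$ the map $s\mapsto I(s\star u)$ has a unique critical point $s_u$, a strict global maximum with $s_u\star u\in\mathcal P_m$, so that
\[
E_m=\inf_{u\in S_m}\max_{s\in\mathbb R}I(s\star u),
\]
with $s_u$ and the maximal value depending continuously on $u$. This representation is the backbone of all three claims.

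For monotonicity I would fix $0<m_1<m_2$, set $\kappa:=m_2/m_1>1$, and for $\varepsilon>0$ take $u\in\mathcal P_{m_1}$ with $I(u)\le E_{m_1}+\varepsilon$ (so $I(u)=\max_sI(s\star u)$). Consider the competitors $w_B:=\kappa^{1/2}B^{-N/2}u(\cdot/B)\in S_{m_2}$, $B>0$; since varying $B$ merely retraces the fiber over the rescaled profile one has $E_{m_2}\le V(\kappa):=\max_{B>0}I(w_B)$ and $V(1)=I(u)$. Computing the derivative of the envelope (legitimate by the nondegenerate fiber structure) and using that the optimal $w$ lies on $\mathcal P_{m_2}$ reduces $V'(\kappa)\le0$ to
\[
\int_{\mathbb R^N}F(w)\,dx\ \ge\ \frac{N-p}{Np}\|\nabla w\|_{L^p}^p+\frac1p\|w\|_{L^p}^p,\qquad w\in\mathcal P_{m_2}.
\]
Inserting the Pohozaev identity $P(w)=0$ into $(f5)$ gives $\int F(w)\ge\frac{2}{N(\gamma-2)}\bigl(\tfrac{N(p-2)+2p}{2p}\|\nabla w\|_{L^p}^p+\tfrac{N(p-2)}{2p}\|w\|_{L^p}^p\bigr)$, and the requirement $\gamma<p^{*}$ is \emph{exactly} what makes the dominant gradient coefficient prevail, the lower–order $\|w\|_{L^p}^p$ contribution being absorbed by the strict margin; this yields $E_{m_2}<E_{m_1}$. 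Under $(f0)$–$(f4)$ only, the same scheme with the weaker information from $(f4)$ still produces the non–strict bound $E_{m_2}\le E_{m_1}$.

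The limit $\lim_{m\to0^+}E_m=+\infty$ is the hardest point and the one most sensitive to the relation between $p$ and $N$. For $u\in\mathcal P_m$ I would rewrite $P(u)=0$ as $\frac{N(p-2)+2p}{2p}\|\nabla u\|_{L^p}^p+\frac{N(p-2)}{2p}\|u\|_{L^p}^p=\frac N2\int\widetilde F(u)$, estimate $|\widetilde F(t)|\le\delta|t|^{p^{*}}+C_\delta|t|^{p_*}$ using $(f2)$ with a \emph{small} constant on the critical term, and apply the mass–critical Gagliardo–Nirenberg inequality $\int|u|^{p_*}\le C\|\nabla u\|_{L^p}^p\,m^{p/N}$ together with $\int|u|^{p^{*}}\le S^{-p^{*}/p}\|\nabla u\|_{L^p}^{p^{*}}$. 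Dividing by $\|\nabla u\|_{L^p}^p$ and letting $\delta\to0$ forces $\|\nabla u\|_{L^p}^p\to\infty$ uniformly on $\mathcal P_m$ as $m\to0$. One then upgrades this to $E_m\to\infty$ by using the fiber–maximality of $u$: evaluating $I$ at the scale where the gradient equals an adjustable level $r$ gives $I(u)\ge(\tfrac1p-o(1))r-\delta S^{-p^{*}/p}r^{p^{*}/p}$, and because $\|\nabla u\|_{L^p}^p\to\infty$ one may take $r$ large while $\delta\to0$, driving the right–hand side to $+\infty$. The genuine difficulty here is that on $\mathcal P_m$ the leading parts of $I$ and of $P$ balance at the mass–critical exponent $p_*$, so the divergence sits in the lower–order gap, which must be extracted from the strict supercriticality $(f3)$.

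Finally, continuity on $(0,\infty)$ I would obtain from the min–max formula by running the competitor construction in both directions. For $m_n\to m$, the bound $\limsup E_{m_n}\le E_m$ comes from a near–optimal $u\in\mathcal P_m$ and $w_n:=\sqrt{m_n/m}\,u\in S_{m_n}$, together with the continuity of $u\mapsto\max_sI(s\star u)$; the reverse bound $\liminf E_{m_n}\ge E_m$ uses near–optimizers $u_n\in\mathcal P_{m_n}$, which by the monotonicity and local finiteness of $E$ are bounded in $\mathscr{X}$ (the lower bounds above keep their gradients from degenerating), rescaled to $\sqrt{m/m_n}\,u_n\in S_m$. Combined with the established monotonicity this gives continuity and finishes the proof. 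Thus the two real obstacles are the divergence $E_m\to\infty$ and the persistent extra term $\frac1p\|u\|_{L^p}^p$ in every scaling computation, which is precisely what forces $2\le p<N$ and the careful bookkeeping of Lemmas \ref{lem2.5} and \ref{lem4.6}.
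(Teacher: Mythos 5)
Your fibering setup and your treatment of $\lim_{m\to0^+}E_m=+\infty$ are sound; in fact the latter (fiber maximality plus the bounds $|F(t)|\le\delta|t|^{p^*}+C_\delta|t|^{p_*}$, Sobolev and mass-critical Gagliardo--Nirenberg) is a cleaner, more quantitative route than the paper's Lemma \ref{lem3.5}, which goes through rescaled sequences and Lions' vanishing lemma. Your continuity argument follows the paper's Lemma \ref{lem3.1} in outline, but it glosses the actual technical content there: to compare $\max_sI\bigl(s\star(\sqrt{m/m_n}\,u_n)\bigr)$ with $\max_sI(s\star u_n)$ one must show the maximizing parameters $s(\cdot)$ stay bounded from above along the sequence; boundedness of $\{u_n\}$ in $\mathscr X$ is not enough, and the paper obtains this from non-vanishing (via Lemma \ref{Lemma I.1}) together with a Fatou-type contradiction (Claims 2 and 3 of Lemma \ref{lem3.1}).

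The genuine gap is in the monotonicity step, which is where you depart from the paper. Your reduction of $V'(\kappa)\le0$ to $\int F(w)\ge\frac{N-p}{Np}\|\nabla w\|_{L^p}^p+\frac1p\|w\|_{L^p}^p$ on $\mathcal P_{m_2}$ is correct, and so is the bound from $(f5)$ plus $P(w)=0$. But the coefficient comparison succeeds only for the gradient term: on the $\|w\|_{L^p}^p$ term you would need $\frac{p-2}{p(\gamma-2)}\ge\frac1p$, i.e.\ $\gamma\le p$, whereas $(f5)$ forces $\gamma>p_*>p$. The resulting deficit is $\frac{\gamma-p}{p(\gamma-2)}\|w\|_{L^p}^p$, and your ``absorption by the strict margin'' is exactly the inequality
\begin{equation*}
(N-p)(p^*-\gamma)\,\|\nabla w\|_{L^p}^p\ \ge\ N(\gamma-p)\,\|w\|_{L^p}^p\qquad\text{for all }w\in\mathcal P_{m_2},\ \text{all }m_2>0,
\end{equation*}
which you never prove and which has no reason to hold: Gagliardo--Nirenberg only gives $\|w\|_{L^p}^p\le C(m)\|\nabla w\|_{L^p}^{\theta p}$ with $\theta<1$ and an $m$-dependent constant, so at best this could be salvaged as a smallness-of-mass statement, not one valid on $(0,\infty)$. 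Your fallback claim that under $(f0)$--$(f4)$ ``the same scheme'' still yields the non-strict bound points in the wrong direction: by Lemma \ref{lem2.3}, $(f4)$ gives $f(t)t\ge p_*F(t)$, hence $\widetilde F\ge(p_*-2)F$, and $P(w)=0$ then produces an \emph{upper} bound on $\int F(w)$, useless for the lower bound your scheme needs. The paper sidesteps both problems: nonincreasingness (Lemma \ref{lem3.2}) is proved with no sign information at all, by truncating a near-minimizer and adding the missing mass through a disjointly supported bump $\sigma\star\tilde v$ whose energy is made arbitrarily small as $\sigma\to-\infty$; strictness is then obtained only through an \emph{attained} minimizer and the sign of its Lagrange multiplier (Lemmas \ref{lem3.3}, \ref{lem3.4} and \ref{lemma_xin}), precisely because the sign of $I'(w)w$ on $\mathcal P_m$ is not controlled pointwise by $(f0)$--$(f5)$. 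Note finally that even if your pointwise inequality held, passing from near-minimizers to strict decrease requires a $u$-independent negative upper bound on $V'$; without that uniformity the limit $\varepsilon\to0$ only returns $E_{m_2}\le E_{m_1}$.
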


The content of this paper is organized as follows.  In section 2, we prepare basic results about the nonlinearity $f$ and the energy functional $I$ that will be needed later. In section 3, we mainly focus on the properties of ground state energy $E_m$. Section 4 is devoted to the proofs the Theorems \ref{theo1.1} and \ref{theo1.2}.

\section{Basic and preliminary results}
In this section, we give some basic results of the nonlinearity $f$ and the functional $I$ that are useful in the proof of our main results. Set
\begin{equation*}
B_m:=\left\{u\in\mathscr{X}:{\Vert u\Vert}^2_{L^2(\mathbb{R}^N)}\leq m\right\},
\end{equation*}
where $m>0$ is arbitrary but fixed. Then we have following results.

\begin{lemma}\label{lem2.1}
 Assume $N\geq3$ and $f$ satisfies $(f0)-(f2)$, then the following statements hold.

(i) There exists $\delta=\delta(N,p,m)>0$ small enough such that for all $u\in B_m$ satisfying ${\Vert\nabla u\Vert}_{L^p(\mathbb{R}^N)}\leq\delta$, we have
\begin{equation*}
\frac{1}{2p}\int_{\mathbb{R}^N}({\vert\nabla u\vert}^p+{\vert u\vert}^p)dx\leq I(u)\leq\int_{\mathbb{R}^N}({\vert\nabla u\vert}^p+{\vert u\vert}^p)dx.
\end{equation*}

(ii) Let $\{u_n\}$ be a bounded sequence in $\mathscr{X}$ with $\lim_{n\to\infty}{\Vert u_n\Vert}_{L^{p_*}(\mathbb{R}^N)}=0$, then
\begin{equation*}
\lim\limits_{n\to\infty}\int_{\mathbb{R}^N}F(u_n)dx=0 =\lim\limits_{n\to\infty}\int_{\mathbb{R}^N}\widetilde{F}(u_n)dx.
\end{equation*}

(iii) Let $\{u_n\}$ and $\{v_n\}$ be two bounded sequences in $\mathscr{X}$ with $\lim_{n\to\infty}{\Vert v_n\Vert}_{L^{p_*}(\mathbb{R}^N)}=0$, then
\begin{equation}\label{eq2.1}
\lim\limits_{n\to\infty}\int_{\mathbb{R}^N}F(u_n)v_ndx=0.
\end{equation}
\end{lemma}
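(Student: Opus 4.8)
The plan is to reduce all three statements to two elementary two-sided growth bounds on $f$ and $F$ and then feed them into the Gagliardo--Nirenberg and Sobolev inequalities. Since $p_*-1=p-1+\frac{2p}{N}$ and $p^*-1=\frac{Np-N+p}{N-p}$, conditions $(f1)$ and $(f2)$ state precisely that $f(t)/|t|^{p_*-1}\to0$ as $t\to0$ and $f(t)/|t|^{p^*-1}\to0$ as $t\to\infty$. From these, together with $(f0)$ (boundedness on compact sets), I would extract for every $\varepsilon>0$ a constant $C_\varepsilon>0$ with
\[
|f(t)|\le \varepsilon|t|^{p_*-1}+C_\varepsilon|t|^{p^*-1}\quad\text{and}\quad |f(t)|\le C_\varepsilon|t|^{p_*-1}+\varepsilon|t|^{p^*-1}\qquad(t\in\mathbb{R}),
\]
the first version being efficient near $t=0$ and the second near $t=\infty$; integrating yields $|F(t)|\le\varepsilon|t|^{p_*}+C_\varepsilon|t|^{p^*}$ and its symmetric companion, and since $\widetilde F(t)=f(t)t-2F(t)$ the same forms bound $|\widetilde F|$. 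I would also record two fixed inequalities: the mass-critical Gagliardo--Nirenberg inequality $\|u\|_{L^{p_*}}^{p_*}\le C_N\|\nabla u\|_{L^p}^{p}\|u\|_{L^2}^{2p/N}$, in which the interpolation exponent $\theta=\frac{N}{N+2}$ is chosen so that $\theta p_*=p$ (this is what singles out $p_*=p+\frac{2p}{N}$), and the Sobolev inequality $\|u\|_{L^{p^*}}\le C_S\|\nabla u\|_{L^p}$.

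For (i) I would bound $\int F(u)\,dx$ both ways by $\int|F(u)|\,dx\le\varepsilon\|u\|_{L^{p_*}}^{p_*}+C_\varepsilon\|u\|_{L^{p^*}}^{p^*}$. On $B_m$ we have $\|u\|_{L^2}^2\le m$, so Gagliardo--Nirenberg turns the first term into $\varepsilon C_N m^{p/N}\|\nabla u\|_{L^p}^{p}$, while Sobolev turns the second into $C_\varepsilon C_S^{p^*}\|\nabla u\|_{L^p}^{p^*}$. Because $p^*>p$, on $\{\|\nabla u\|_{L^p}\le\delta\}$ one has $\|\nabla u\|_{L^p}^{p^*}\le\delta^{p^*-p}\|\nabla u\|_{L^p}^{p}$, so in total $\int|F(u)|\,dx\le K(\varepsilon,\delta)\int(|\nabla u|^p+|u|^p)\,dx$ with $K(\varepsilon,\delta)=\varepsilon C_N m^{p/N}+C_\varepsilon C_S^{p^*}\delta^{p^*-p}$. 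Choosing $\varepsilon$ and then $\delta$ small so that $K\le\frac{1}{2p}$ gives the lower bound $I(u)\ge\frac{1}{2p}\int(|\nabla u|^p+|u|^p)$ directly, and the upper bound $I(u)\le\int(|\nabla u|^p+|u|^p)$ as well, since $\frac{1}{2p}\le\frac{p-1}{p}$ for $p\ge2$.

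For (ii) and (iii) the idea is to let the small factor $\varepsilon$ ride on the $L^{p^*}$-quantities, which are only bounded, and the large factor $C_\varepsilon$ ride on the $L^{p_*}$-quantities, which vanish. Thus in (ii), $\int|F(u_n)|\,dx\le\varepsilon\|u_n\|_{L^{p^*}}^{p^*}+C_\varepsilon\|u_n\|_{L^{p_*}}^{p_*}$; boundedness in $W^{1,p}$ (hence in $L^{p^*}$ by Sobolev) controls the first term by $\varepsilon\cdot\mathrm{const}$ and the hypothesis $\|u_n\|_{L^{p_*}}\to0$ kills the second, so $\limsup_n\int|F(u_n)|\,dx\le\varepsilon\cdot\mathrm{const}$ and $\varepsilon\downarrow0$ finishes it; the bound on $|\widetilde F|$ gives $\int\widetilde F(u_n)\,dx\to0$ identically. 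For (iii) I would apply Hölder to each piece of $\int|f(u_n)||v_n|\,dx\le\varepsilon\int|u_n|^{p^*-1}|v_n|\,dx+C_\varepsilon\int|u_n|^{p_*-1}|v_n|\,dx$, getting $\varepsilon\|u_n\|_{L^{p^*}}^{p^*-1}\|v_n\|_{L^{p^*}}+C_\varepsilon\|u_n\|_{L^{p_*}}^{p_*-1}\|v_n\|_{L^{p_*}}$; the $L^{p^*}$- and $L^{p_*}$-norms of $u_n$ are bounded (Sobolev and Gagliardo--Nirenberg), $\|v_n\|_{L^{p^*}}$ stays bounded, and $\|v_n\|_{L^{p_*}}\to0$, so once more the limit is $\le\varepsilon\cdot\mathrm{const}$ for every $\varepsilon$.

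I do not expect any single estimate to be hard; the real work is the exponent bookkeeping that the introduction flags as the $p$--$N$ difficulty. One must check that $\theta p_*=p$ so the $L^{p_*}$-term in (i) is genuinely absorbed into $\|\nabla u\|_{L^p}^{p}$, and that $2\le p<N$, $N\ge3$ keep both $p_*$ and $p^*$ admissible in Gagliardo--Nirenberg and Sobolev; one must also choose, in each part, the correct one of the two interchangeable growth bounds so that the vanishing norm always meets $C_\varepsilon$ and the merely bounded norm always meets $\varepsilon$. One technical point in (iii): the Hölder computation pairs $|u_n|^{p^*-1}$ with $|v_n|$ in conjugate $L^{p^*}$-spaces, i.e. it is the integrand $f(u_n)v_n$ that is controlled directly this way, so I would verify the boundedness of $\|u_n\|_{L^{p_*}}$ and $\|u_n\|_{L^{p^*}}$ via Gagliardo--Nirenberg and Sobolev before invoking the inequality.
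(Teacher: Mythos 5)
Your proposal is correct and follows essentially the same route as the paper: the same dual pair of growth bounds $|F(t)|\le\varepsilon|t|^{p_*}+C_\varepsilon|t|^{p^*}$ (respectively with $\varepsilon$ and $C_\varepsilon$ swapped), with the vanishing $L^{p_*}$-quantities always paired against $C_\varepsilon$ and the merely bounded $L^{p^*}$-quantities against $\varepsilon$, and the same H\"older estimates in (iii). The only cosmetic difference is in (i), where you invoke the mass-critical Gagliardo--Nirenberg inequality $\|u\|_{L^{p_*}}^{p_*}\le C_N\|\nabla u\|_{L^p}^{p}\|u\|_{L^2}^{2p/N}$ directly, while the paper obtains exactly the same bound by interpolating $L^{p_*}$ between $L^2$ and $L^{p^*}$ via H\"older and then applying Sobolev.
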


\begin{proof}
(i) After transposition and calculation of terms, it only remains to prove that there exists $\delta=\delta(N,p,m)>0$ small enough such that, for any $u\in B_m$ with ${\Vert\nabla u\Vert}_{L^p(\mathbb{R}^N)}\leq\delta$,
\begin{equation}\label{eq2.2}
\int_{\mathbb{R}^N}\vert F(u)\vert dx\leq\frac{1}{2p}\int_{\mathbb{R}^N}{\vert\nabla u\vert}^pdx.
\end{equation}
For any arbitrary constant $\varepsilon>0$, it follows from $(f0)-(f2)$ that there exists $C_{\varepsilon}>0$ such that $\vert F(t)\vert\leq\varepsilon{\vert t\vert}^{p_*}+C_{\varepsilon}{\vert t\vert}^{p^*}$ for $t\in\mathbb{R}$. For any $u\in B_m$, using H{\"o}lder inequality and Gagliardo-Nirenberg inequality, we have
\begin{equation*}
     \begin{aligned}
          \int_{\mathbb{R}^N}\vert F(u)\vert dx & \leq\varepsilon\int_{\mathbb{R}^N}{\vert u\vert}^{p_*}dx+C_{\varepsilon}\int_{\mathbb{R}^N}{\vert u\vert}^{p^*}dx\\
          & \leq\varepsilon m^{\frac{p}{N}}\left(\int_{\mathbb{R}^N}{\vert u\vert}^{p^*}dx\right)^{\frac{N-p}{N}}+C_{\varepsilon}\int_{\mathbb{R}^N}{\vert u\vert}^{p^*}dx\\
          & \leq\varepsilon m^{\frac{p}{N}}C_1\int_{\mathbb{R}^N}{\vert\nabla u\vert}^pdx+C_{\varepsilon}C_2\left(\int_{\mathbb{R}^N}{\vert\nabla u\vert}^pdx\right)^{\frac{N}{N-p}}\\
          & =\left[\varepsilon m^{\frac{p}{N}}C_1+C_{\varepsilon}C_2\left(\int_{\mathbb{R}^N}{\vert\nabla u\vert}^pdx\right)^{\frac{p}{N-p}} \right] \int_{\mathbb{R}^N}{\vert\nabla u\vert}^pdx,
     \end{aligned}
\end{equation*}
where $C_1$ and $C_2$ are positive constants depending only on $p$ and $N$. Clearly, we can choose $\varepsilon\leq\frac{1}{4pC_1m^{\frac{p}{N}}}$ and $\delta\leq\left(\frac{1}{4pC_{\varepsilon}C_2}\right)^{\frac{N-p}{p^2}}$ to obtain \eqref{eq2.1}.

(ii) Now we only prove the second part of the claim, that is,
\begin{equation}\label{eq2.3}
     \lim\limits_{n\to\infty}\int_{\mathbb{R}^N}\widetilde{F}(u_n)dx=0\text{ if } \lim\limits_{n\to\infty}{\Vert u_n\Vert}_{L^{p_*}(\mathbb{R}^N)}=0.
\end{equation}
The rest of the claim can be proved in a similar way. Choose $M>0$ large enough such that $\sup_{n\geq1}{\Vert u_n\Vert}_\mathscr{X}\leq M$, then ${\Vert\nabla u_n\Vert}^p_{L^p(\mathbb{R}^N)}\leq M^p$. For any arbitrary constant $\varepsilon>0$, it follows from $(f0)-(f2)$ that there exists $M_{\varepsilon}>0$ such that
\begin{equation*}
     \vert\widetilde{F}(t)\vert\leq\vert f(t)\vert\vert t\vert+\vert F(t)\vert\leq\varepsilon{\vert t\vert}^{p^*}+M_{\varepsilon}{\vert t\vert}^{p_*}\text{ for }t\in\mathbb{R}.
\end{equation*}
Hence, direct calculations yield that
\begin{equation*}
     \begin{aligned}
          \int_{\mathbb{R}^N}\vert\widetilde{F}(u)\vert dx & \leq\varepsilon\int_{\mathbb{R}^N}{\vert u_n\vert}^{p^*}dx+M_{\varepsilon}\int_{\mathbb{R}^N}{\vert u_n\vert}^{p_*}dx\\
          & \leq\varepsilon C_1\left(\int_{\mathbb{R}^N}{\vert\nabla u_n\vert}^pdx\right)^{\frac{N}{N-p}}+M_{\varepsilon}\int_{\mathbb{R}^N}{\vert u_n\vert}^{p_*}dx\\
          & \leq\varepsilon C_1M^{p^*}+M_{\varepsilon}\int_{\mathbb{R}^N}{\vert u_n\vert}^{p_*}dx.
     \end{aligned}
\end{equation*}
By the arbitrariness of the $\varepsilon$, \eqref{eq2.2} holds.

(iii) Choose $M_1$, $M_2>0$ large enough such that $\sup_{n\geq1}{\Vert u_n\Vert}_\mathscr{X}\leq M_1$ and $\sup_{n\geq1}{\Vert v_n\Vert}_\mathscr{X}\leq M_2$, then ${\Vert u_n\Vert}^2_{L^2(\mathbb{R}^N)}\leq M_1^2$, ${\Vert\nabla u_n\Vert}^p_{L^p(\mathbb{R}^N)}\leq M_1^p$ and ${\Vert\nabla v_n\Vert}^p_{L^p(\mathbb{R}^N)}\leq M_2^p$. For any arbitrary constant $\varepsilon>0$, it follows from $(f0)-(f2)$ that there exists $M_{\varepsilon}>0$ such that $\vert F(t)\vert\leq\varepsilon{\vert t\vert}^{p^*-1}+M_{\varepsilon}{\vert t\vert}^{p_*-1}$ for $t\in\mathbb{R}$.
Using H{\"o}lder inequality and Gagliardo-Nirenberg inequality, we have
\begin{equation*}
     \begin{aligned}
          \int_{\mathbb{R}^N}\vert f(u_n)v_n\vert dx & \leq\varepsilon\int_{\mathbb{R}^N}{\vert u_n\vert}^{p^*-1}\vert v_n\vert dx+M_{\varepsilon}\int_{\mathbb{R}^N}{\vert u_n\vert}^{p_*-1}\vert v_n\vert dx\\
          & \leq\varepsilon\left(\int_{\mathbb{R}^N}{\vert u_n\vert}^{p^*}dx\right)^{p^*-1}\left(\int_{\mathbb{R}^N} {\vert v_n\vert}^{p^*}dx\right)^{p^*}\\
          & +M_{\varepsilon}\left(\int_{\mathbb{R}^N}{\vert u_n\vert}^{p_*}dx\right)^{\frac{Np-N+2p}{Np+2p}} \left(\int_{\mathbb{R}^N}{\vert v_n\vert}^{p_*}dx\right)^{\frac{N}{(N+2)p}}\\
          & \leq\varepsilon C_1C_2\left(\int_{\mathbb{R}^N}{\vert\nabla u_n\vert}^pdx\right)^{\frac{N(Np-N+2p)}{p(N+2)(N-p)}}\left(\int_{\mathbb{R}^N} {\vert\nabla v_n\vert}^pdx\right)^{\frac{1}{p}}\\
          & +M_{\varepsilon}\left(\int_{\mathbb{R}^N}{\vert u_n\vert}^{p_*}dx\right)^{\frac{Np-N+2p}{Np+2p}}{\Vert v_n\Vert}_{L^{p_*}(\mathbb{R}^N)}\\
          & \leq\varepsilon C_1C_2M_1^{\frac{N(Np-N+2p)}{(N+2)(N-p)}}M_2+M_{\varepsilon}M_1^{\frac{2(Np-N+2p)}{N(N+2)}}\left(\int_{\mathbb{R}^N}{\vert\nabla u_n\vert}^pdx\right)^{\frac{Np-N+2p}{Np+2p}}{\Vert v_n\Vert}_{L^{p_*}(\mathbb{R}^N)}\\
          & \leq\varepsilon C_1C_2M_1^{\frac{N(Np-N+2p)}{(N+2)(N-p)}}M_2+M_{\varepsilon} M_1^{\frac{Np-N+2p}{N}}{\Vert v_n\Vert}_{L^{p_*}(\mathbb{R}^N)}.
     \end{aligned}
\end{equation*}
Hence, \eqref{eq2.3} holds.
\end{proof}

\begin{remark}\label{re2.1}
Under the assumptions of Lemma \ref{lem2.1} and with slight modification of the proof of \eqref{eq2.1}, there exists $\delta=\delta(N,p,m)>0$ small enough such that for each $u\in B_m$ with ${\Vert\nabla u_n\Vert}_{L^p(\mathbb{R}^N)}\leq \delta$, it holds
\begin{equation*}
     \int_{\mathbb{R}^N}\vert\widetilde{F}(u)\vert dx\leq\frac{2}{Np}\int_{\mathbb{R}^N}{\vert\nabla u_n\vert}^pdx,
\end{equation*}
which further leads to
\begin{equation*}
     \begin{aligned}
          P(u) & =\frac{N(p-2)+2p}{2p}\int_{\mathbb{R}^N}{\vert\nabla u\vert}^pdx+\frac{N(p-2)}{2p}\int_{\mathbb{R}^N}{\vert u\vert}^pdx-\frac{N}{2}\int_{\mathbb{R}^N}\widetilde{F}(u)dx\\
          & \geq\frac{N(p-2)+2(p-1)}{2p}\int_{\mathbb{R}^N}{\vert\nabla u\vert}^pdx+\frac{N(p-2)}{2p}\int_{\mathbb{R}^N}{\vert u\vert}^pdx\\
          & \geq\frac{N(p-2)+2(p-1)}{2p}\int_{\mathbb{R}^N}{\vert\nabla u\vert}^pdx.
     \end{aligned}
\end{equation*}
\end{remark}

\begin{remark}\label{re2.2}
Assume $N\geq3$ and $f$ satisfies $(f0)$, $(f1)$ and $(f4)$, then we can define
\begin{equation}\label{eq2.7}
     \Lambda(t):=\left\{
     \begin{aligned}
          \frac{f(t)t-2F(t)}{{\vert t\vert}^{p_*}},
          &\quad \text{ for } t\neq0,\\
          0,
          &\quad \text{ for } t=0.
     \end{aligned}
     \right.
\end{equation}
Clearly, the function $\Lambda:\mathbb{R}\to\mathbb{R}$ is continuous and strictly decreasing on $(-\infty,0)$ and strictly increasing on $(0,+\infty)$.
\end{remark}

\begin{lemma}\label{lem2.3}
Assume $N\geq3$ and $f$ satisfies $(f0)-(f4)$, then $f(t)t>p_*F(t)>0$ for all $t\neq0$.
\end{lemma}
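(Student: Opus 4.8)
The plan is to reduce everything to the auxiliary function $\Lambda$ from Remark \ref{re2.2} together with an integral representation of $F$ in terms of $\widetilde{F}$. First I would record the sign information coming from Remark \ref{re2.2}: since $\Lambda$ is continuous with $\Lambda(0)=0$, strictly decreasing on $(-\infty,0)$ and strictly increasing on $(0,+\infty)$, it follows at once that $\Lambda(t)>0$ for every $t\neq0$, that is, $\widetilde{F}(t)=\Lambda(t)|t|^{p_*}>0$ for all $t\neq0$. This settles the sign of $\widetilde{F}$, but note that the target inequality $f(t)t>p_*F(t)$ is strictly stronger than $\widetilde{F}(t)>0$ (the latter only yields $f(t)t>2F(t)$, and $p_*>2$). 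Hence the monotonicity of $\Lambda$ must be used \emph{quantitatively}, not merely through its sign.

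The key computation is the elementary identity
\[
\frac{d}{dt}\left(\frac{F(t)}{t^2}\right)=\frac{f(t)t-2F(t)}{t^3}=\frac{\widetilde{F}(t)}{t^3},\qquad t\neq0,
\]
which is valid because $f\in C(\mathbb{R},\mathbb{R})$ gives $F\in C^1$ with $F'=f$. Next I would check, using $(f1)$ and the fact that $p_*=p+\tfrac{2p}{N}>2$ (since $p\geq2$ forces $p_*\geq 2+\tfrac{4}{N}$), that $F(t)=o(|t|^{p_*})$ and $\widetilde{F}(t)=o(|t|^{p_*})$ as $t\to0$. This makes $\tfrac{F(t)}{t^2}\to0$ and renders $\tfrac{\widetilde{F}(s)}{s^3}=\Lambda(s)|s|^{p_*-3}$ integrable near the origin (the borderline exponent condition $p_*-3>-1$ being exactly $p_*>2$). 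Integrating the identity from $0$ then gives, for $t>0$,
\[
\frac{F(t)}{t^2}=\int_0^t\frac{\widetilde{F}(s)}{s^3}\,ds=\int_0^t\Lambda(s)\,s^{p_*-3}\,ds,
\]
and since the integrand is strictly positive this already yields $F(t)>0$; the case $t<0$ is handled identically by integrating over $(t,0)$ and tracking the sign of $s^3$ (where $-s^3=|s|^3>0$ restores positivity).

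Finally, to upgrade from $F(t)>0$ to $f(t)t>p_*F(t)$ I would feed the strict monotonicity of $\Lambda$ into the representation. For $t>0$ we have $\Lambda(s)<\Lambda(t)$ for $s\in(0,t)$, so
\[
\frac{F(t)}{t^2}=\int_0^t\Lambda(s)\,s^{p_*-3}\,ds<\Lambda(t)\int_0^t s^{p_*-3}\,ds=\frac{\Lambda(t)}{p_*-2}\,t^{p_*-2},
\]
whence $(p_*-2)F(t)<\Lambda(t)t^{p_*}=\widetilde{F}(t)=f(t)t-2F(t)$, i.e. $f(t)t>p_*F(t)$; for $t<0$ one uses that $\Lambda$ is decreasing on $(-\infty,0)$ in the same manner. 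Combined with $F(t)>0$, this produces $f(t)t>p_*F(t)>0$ for all $t\neq0$. I expect the main obstacle to be the bookkeeping near $t=0$: verifying that the boundary term $\lim_{t\to0}\tfrac{F(t)}{t^2}$ vanishes and that the integral $\int_0^t\tfrac{\widetilde{F}(s)}{s^3}\,ds$ converges, both of which hinge on the borderline growth in $(f1)$ together with the strict inequality $p_*>2$. Once the integral representation is justified, the monotonicity of $\Lambda$ closes the argument, with the sign tracking in the $t<0$ case being the only remaining point of care.
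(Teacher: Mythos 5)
Your proof is correct, and it takes a genuinely different route from the paper's. The paper argues through five claims, mostly by contradiction: positivity of $F$ via a global-minimum argument applied to $F(t)/|t|^{p_*}$ (which needs $(f1)$ at zero and $(f3)$ at infinity), existence of sequences near $0$ and near infinity along which $f(t)t>p_*F(t)$ (Claims 2 and 3, again using $(f1)$ and $(f3)$), then the non-strict inequality everywhere by an interval argument driven by $(f4)$ (Claim 4), and finally strictness (Claim 5). You instead convert the strict monotonicity of $\Lambda$ from Remark \ref{re2.2} into one quantitative estimate: integrating the identity $\frac{d}{dt}\bigl(F(t)/t^2\bigr)=\widetilde{F}(t)/t^3$ from the origin --- legitimate because $(f1)$ gives $F(t)=o(|t|^{p_*})$ near zero, so that $p_*>2$ both kills the boundary term $F(\varepsilon)/\varepsilon^2$ and makes $\Lambda(s)|s|^{p_*-3}$ integrable near $0$ --- and then bounding $\Lambda(s)<\Lambda(t)$ pointwise under the integral, which yields $(p_*-2)F(t)<\widetilde{F}(t)$, i.e.\ exactly $f(t)t>p_*F(t)$, while $F(t)>0$ falls out of the positivity of the integrand; your sign bookkeeping for $t<0$ (where $-s^3=|s|^3$ and $\Lambda(s)<\Lambda(t)$ by the decreasing monotonicity) is also right. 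Besides being shorter and constructive rather than contradiction-based, your argument has a real advantage: it only ever uses $(f0)$, $(f1)$ and $(f4)$ (the inputs to Remark \ref{re2.2}, plus $p_*>2$), never $(f2)$ or $(f3)$, so it establishes the lemma under strictly weaker hypotheses than the paper assumes. The paper's scheme, modeled on \cite{JeanjeanCVPDE2020}, reaches the same conclusion but genuinely needs $(f3)$ for its Claims 1 and 3; once your integral representation is available, that machinery is superfluous for this statement.
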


\begin{proof}
For clarity, we divide the proof into five claims.

\textbf{Claim 1.} $F(t)>0$ for all $t\neq0$.

Suppose that there exists $t_0\neq0$ such that $F(t_0)\leq0$. From $(f1)$ and $(f3)$, the function $F(t)/{\vert t\vert}^{p_*}$ reaches its global minimum at some $\theta\neq0$ satisfying $F(\theta)\leq0$ and
\begin{equation*}
     \left[\frac{F(t)}{{\vert t\vert}^{p_*}}\right]^{'}_{t=\theta}=\frac{f(\theta)\theta-p_*F(\theta)}{{\vert \theta\vert}^{p_*+1}\text{sign}(\theta)}=0.
\end{equation*}
Noting that by Remark \ref{re2.2}, we have $f(t)t>2F(t)$ for any $t\neq0$, then
\begin{equation*}
     0<f(\theta)\theta-2F(\theta)=\left(p-2+2p/N\right)F(\theta)\leq0.
\end{equation*}
Therefore, Claim 1 is proved.

\textbf{Claim 2.} There exist a positive sequence $\{\theta^+_n\}$ and a negative sequence $\{\theta^-_n\}$ such that $\vert \theta^\pm_n\vert\to0$ as $n\to+\infty$ and $f(\theta^\pm_n)\theta^\pm_n>p_*F(\theta^\pm_n)$ for each $n\geq1$.

We only consider the positive case by contradiction, the negative case can be proved similarly. Assume that there exists $T_\theta>0$ small enough such that $f(t)t\leq p_*F(t)$ for any $t\in(0,T_\theta]$. By Claim 1 we obtain that
\begin{equation*}
     \frac{F(t)}{t^{p_*}}\geq\frac{F(T_\theta)}{{T_\theta}^{p_*}}>0 \text{ for all } t\in(0,T_\theta],
\end{equation*}
which contradicts with the fact that $\lim_{t\to0}F(t)/{\vert t\vert}^{p_*}=0$ from $(f1)$. Hence, the desired sequence $\{\theta^+_n\}$ exists.

\textbf{Claim 3.} There exist a positive sequence $\{\lambda^+_n\}$ and a negative sequence $\{\lambda^-_n\}$ such that $\vert \lambda^\pm_n\vert\to+\infty$ as $n\to+\infty$ and $f(\lambda^\pm_n)\lambda^\pm_n>p_*F(\lambda^\pm_n)$ for each $n\geq1$.

We only prove the existence of $\{\lambda^-_n\}$, the positive case can proved similarly. Assume that there exists $T_\lambda>0$ such that $f(t)t\leq p_*F(t)$ for any $t\leq-T_\lambda$, then
\begin{equation*}
     \frac{F(t)}{t^{p_*}}\leq\frac{F(-T_\lambda)}{{T_\lambda}^{p_*}}<+\infty\text{ for all } t<-T_\lambda,
\end{equation*}
which contradicts with the condition $(f3)$. Hence the proof of Claim 3 is completed.

\textbf{Claim 4.} $f(t)t\geq p_*F(t)$ for all $t\neq0$.

Suppose $f(t_\tau)t_\tau<(p_*)F(t_\tau)$ for some $t_\tau\neq0$. Since the case $t_\tau>0$ can be treated in a similar way, we further suppose that $t_\tau<0$. By Claims 2 and 3, there exist $\theta_{min}$ and $\theta_{max}\in\mathbb{R}$ such that $\theta_{min}<t_0\tau<\theta_{max}<0$ and
\begin{equation}\label{eq2.8}
     f(t)t<p_*F(t)\text{ for } t\in(\theta_{min},\theta_{max}).
\end{equation}
Moreover,
\begin{equation}\label{eq2.9}
     f(\theta_{min})\theta_{min}=p_*F(\theta_{min})\text{ and }f(\theta_{max})\theta_{max}=p_*F(\theta_{max}).
\end{equation}
It follows from \eqref{eq2.8} and Claim 1 that
\begin{equation}\label{eq2.10}
     \frac{F(\theta_{min})}{{\vert\theta_{min}\vert}^{p_*}}\leq\frac{F(\theta_{max})}{{\vert\theta_{max}\vert}^{p_*}}.
\end{equation}
On the other hand, using \eqref{eq2.9} and $(f4)$, we have
\begin{equation*}\label{eq2.11}
     \frac{F(\theta_{min})}{{\vert\theta_{min}\vert}^{p_*}}=\frac{N}{N(p-2)+2p}\frac{\widetilde{F}(\theta_{min})}{{\vert\theta_{min}\vert}^{p_*}}>\frac{N}{N(p-2)+2p}\frac{\widetilde{F}(\theta_{max})}{{\vert\theta_{max}\vert}^{p_*}}=\frac{F(\theta_{max})}{{\vert\theta_{max}\vert}^{p_*}}.
\end{equation*}
This contradicts with \eqref{eq2.10}, which implies Claim 4.

\textbf{Claim 5.} $f(t)t>p_*F(t)$ for any $t\neq0$.

From Claim 4, the function $F(t)/t^{p_*}$ is nonincreasing with respect to $t$ on $(-\infty,0)$ and nondecreasing on $(0,+\infty)$. Hence, it follows from $(f4)$ that the function $f(t)/t^{p_*-1}$ is strictly increasing with respect to $t$ on $(-\infty,0)\cup(0,+\infty)$. Then
\begin{equation*}
     p_*F(t)=p_*\int^t_0f(s)ds<p_*\frac{f(t)}{{\vert t\vert}^{p_*-1}}\int^t_0\vert s\vert^{p_*-1}ds=f(t)t\text{ for }t\neq0,
\end{equation*}
which leads to Claim 5.

To sum up, using Claims 1 and 5, we complete the proof of Lemma \ref{lem2.3}.
\end{proof}

Recalling from the fiber map, for any $u\in\mathscr{X}$ and $s\in\mathbb{R}$, we define
\begin{equation*}
     u(x)\mapsto(s\star u)(x):=e^{\frac{Ns}{2}}u(e^sx) \text{ for a.e. } x\in\mathbb{R}^N.
\end{equation*}
Clearly, $s\star u\in\mathscr{X}$ and it preserves the $L^2$-norm ${\Vert s\star u\Vert}_{L^2(\mathbb{R}^N)}={\Vert u\Vert}_{L^2(\mathbb{R}^N)}$ for $s\in\mathbb{R}$.
Moreover, we have
\begin{equation}\label{bian_huan}
     \begin{aligned}
          \int_{\mathbb{R}^N}{\vert\nabla (s\star u)\vert}^pdx
          & =(e^s)^{p_c+p}\int_{\mathbb{R}^N}{\vert\nabla u\vert}^pdx,\\
          \int_{\mathbb{R}^N}{\vert s\star u\vert}^pdx
          & =(e^s)^{p_c}\int_{\mathbb{R}^N}{\vert u\vert}^pdx.
     \end{aligned}
\end{equation}
The following is an important result of the function $s\mapsto I(s\star u)$.

\begin{lemma}\label{lem2.2}
Assume $N\geq3$ and $f$ satisfies the conditions $(f0)-(f3)$. Then for all $u\in\mathscr{X}\setminus\{0\}$, we have

(i) $I(s\star u)\mapsto 0^+$ as $s\mapsto-\infty$;

(ii) $I(s\star u)\mapsto -\infty$ as $s\mapsto+\infty$.
\end{lemma}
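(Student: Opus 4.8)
The plan is to reduce both limits to a single explicit expression for $I(s\star u)$ and then read off the asymptotics term by term. Using the scaling identities in \eqref{bian_huan} for the gradient and $L^p$ pieces, together with the change of variables $y=e^sx$ in the nonlinear piece (which gives $\int_{\mathbb{R}^N}F((s\star u)(x))\,dx=e^{-Ns}\int_{\mathbb{R}^N}F(e^{Ns/2}u(y))\,dy$), one obtains
\begin{equation*}
I(s\star u)=\frac{(e^s)^{p_c+p}}{p}\int_{\mathbb{R}^N}{\vert\nabla u\vert}^p\,dx+\frac{(e^s)^{p_c}}{p}\int_{\mathbb{R}^N}{\vert u\vert}^p\,dx-e^{-Ns}\int_{\mathbb{R}^N}F(e^{Ns/2}u)\,dx.
\end{equation*}
Throughout I would keep in view the algebraic identity $N\big(\tfrac{p_*}{2}-1\big)=p_c+p$, which couples the mass-critical exponent $p_*$ to the gradient scaling and is the reason the two limits behave as stated.

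For (i), as $s\to-\infty$ I would invoke Lemma \ref{lem2.1}(i) rather than estimate $F$ by hand. Fixing $m\ge{\Vert u\Vert}^2_{L^2(\mathbb{R}^N)}$ and using that $s\star u$ preserves the $L^2$-norm, we have $s\star u\in B_m$ for every $s$; moreover ${\Vert\nabla(s\star u)\Vert}^p_{L^p(\mathbb{R}^N)}=(e^s)^{p_c+p}{\Vert\nabla u\Vert}^p_{L^p(\mathbb{R}^N)}\to0$, so the hypothesis ${\Vert\nabla(s\star u)\Vert}_{L^p(\mathbb{R}^N)}\le\delta$ of Lemma \ref{lem2.1}(i) is met for $s$ sufficiently negative. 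The two-sided bound there then squeezes $I(s\star u)$ between a strictly positive quantity and $(e^s)^{p_c+p}{\Vert\nabla u\Vert}^p_{L^p(\mathbb{R}^N)}+(e^s)^{p_c}{\Vert u\Vert}^p_{L^p(\mathbb{R}^N)}$, which tends to $0$; hence $I(s\star u)\to0^+$.

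For (ii), as $s\to+\infty$ I would factor out the leading weight and write $I(s\star u)=(e^s)^{p_c+p}\big[\tfrac{1}{p}{\Vert\nabla u\Vert}^p_{L^p(\mathbb{R}^N)}+(e^s)^{-p}\tfrac{1}{p}{\Vert u\Vert}^p_{L^p(\mathbb{R}^N)}-J(s)\big]$, where by the identity above $J(s):=(e^s)^{-(p_c+p)}\int_{\mathbb{R}^N}F(s\star u)\,dx=\int_{\mathbb{R}^N}\frac{F(e^{Ns/2}u(y))}{(e^{Ns/2})^{p_*}}\,dy$. Writing the integrand as $\frac{F(e^{Ns/2}u)}{\vert e^{Ns/2}u\vert^{p_*}}{\vert u\vert}^{p_*}$, condition $(f3)$ forces it to $+\infty$ pointwise on the positive-measure set $\{u\neq0\}$ as $s\to+\infty$; an application of Fatou's lemma, legitimized by the integrable lower bound $F(t)\ge-C_0{\vert t\vert}^{p_*}$ (valid since $F(t)/{\vert t\vert}^{p_*}$ is continuous, vanishes at $0$ by $(f1)$ and blows up at $\infty$ by $(f3)$, hence is bounded below), yields $J(s)\to+\infty$. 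Since $(e^s)^{-p}\to0$, the bracket tends to $-\infty$ while $(e^s)^{p_c+p}\to+\infty$, so $I(s\star u)\to-\infty$.

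The main obstacle I anticipate is the justification of $J(s)\to+\infty$ in (ii): one must pass the limit inside the integral under only the one-sided growth control of $(f3)$, which is exactly what the integrable lower bound on $F/{\vert\cdot\vert}^{p_*}$ is designed to supply. The companion identity $N\big(\tfrac{p_*}{2}-1\big)=p_c+p$ is what makes $J(s)$ collapse to the clean scale-free integral on which Fatou's lemma can act, so verifying this identity carefully is the linchpin of the whole argument.
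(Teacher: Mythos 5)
Your proof is correct and follows essentially the same route as the paper: part (i) invokes Lemma \ref{lem2.1}(i) in exactly the same way, and part (ii) is the paper's Fatou argument, with your integrable lower bound $F(t)\geq-C_0|t|^{p_*}$ playing precisely the role of the paper's auxiliary function $\Gamma_c$ with $c>0$ chosen so that $\Gamma_c\geq0$. The scaling identity $N\bigl(\tfrac{p_*}{2}-1\bigr)=p_c+p$ that you highlight is indeed the computation the paper carries out implicitly in \eqref{eq2.6}.
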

\begin{proof}
(i) Set $m_0:={\Vert u\Vert}^2_{L^2(\mathbb{R}^N)}>0$, then $s\star u\in S_{m_0}\subset B_{m_0}$. Thanks to Lemma \ref{lem2.1}(i), when $s\to-\infty$, it holds
\begin{equation*}
     \frac{1}{2p}(e^s)^{p_c+p}\int_{\mathbb{R}^N}{\vert\nabla u\vert}^pdx+\frac{1}{2p}e^{sp_c}\int_{\mathbb{R}^N}{\vert u\vert}^pdx\leq I(s\star u)\leq(e^s)^{p_c+p}\int_{\mathbb{R}^N}{\vert\nabla u\vert}^pdx+e^{sp_c}\int_{\mathbb{R}^N}{\vert u\vert}^pdx,
\end{equation*}
which implies $\lim_{s\to-\infty}I(s\star u)=0^+$.

(ii) For each $c\geq0$, we define the following auxiliary function:
\begin{equation}\label{eq2.5}
     \Gamma_c(t):=\left\{
     \begin{aligned}
          \frac{F(t)}{{\vert t\vert}^{p_*}}+c,
          &  \quad\text{ for } t\neq0,\\
          c,&  \quad\text{ for } t=0.
     \end{aligned}
     \right.
\end{equation}
Clearly, $F(t)=\Gamma_c(t){\vert t\vert}^{p_*}-c{\vert t\vert}^{p_*}$ for all $t\in\mathbb{R}$. It follows from $(f0)-(f3)$ that $\Gamma_c(t)$ is continuous with respect to $t$ and $\lim_{\vert t\vert\to\infty}\Gamma_c(t)=+\infty$. Thus there exists $c>0$ large enough such that $\Gamma_c(t)\geq0$ for any $t\in\mathbb{R}$. Thanks to the Fatou's lemma, we have
\begin{equation*}
     \lim\limits_{s\to+\infty}\int_{\mathbb{R}^N}\Gamma_c(e^{\frac{Ns}{2}}u){\vert u\vert}^{p_*}dx=+\infty.
\end{equation*}
Since
\begin{equation}\label{eq2.6}
     \begin{aligned}
          I(s\star u) &=\frac{1}{p}\int_{\mathbb{R}^N}({\vert\nabla(s\star u)\vert}^p+{\vert s\star u\vert}^p)dx+c\int_{\mathbb{R}^N}{\vert s\star u\vert}^{p_*}dx-\int_{\mathbb{R}^N}\Gamma_c(s\star u){\vert s\star u\vert}^{p_*}dx\\
          & =\frac{1}{p}(e^s)^{p_c+p}\int_{\mathbb{R}^N}{\vert\nabla u\vert}^pdx+\frac{1}{p}e^{sp_c}\int_{\mathbb{R}^N}{\vert u\vert}^pdx+(e^s)^{p_c+p}\left(c\int_{\mathbb{R}^N}{\vert
          u\vert}^{p_*}-\Gamma_c(e^{\frac{Ns}{2}}u){\vert u\vert}^{p_*}dx\right)\\
          & =(e^s)^{p_c+p}\left[\frac{1}{p}\int_{\mathbb{R}^N}{\vert\nabla u\vert}^pdx+\frac{1}{p}e^{-sp}\int_{\mathbb{R}^N}{\vert u\vert}^pdx+c\int_{\mathbb{R}^N}{\vert u\vert}^{p_*}dx-\int_{\mathbb{R}^N}\Gamma_c(e^{\frac{Ns}{2}}u){\vert u\vert}^{p_*}dx\right],
     \end{aligned}
\end{equation}
then $I(s\star u)\to-\infty$ as $s\to+\infty$.
\end{proof}

Recalling from the Pohozaev functional
\begin{equation*}
     P(u)=\frac{p_c+p}{p}\int_{\mathbb{R}^N}{\vert\nabla u\vert}^pdx+\frac{p_c}{p}\int_{\mathbb{R}^N}{\vert u\vert}^pdx-\frac{N}{2}\int_{\mathbb{R}^N}\widetilde{F}(u)dx,
\end{equation*}
where $\widetilde{F}(t):=f(t)t-2F(t)$ for any $t\in\mathbb{R}$, and thanks to the monotonicity condition $(f4)$, we have following lemma.
\begin{lemma}\label{lem2.4}
Assume $N\geq3$ and $f$ satisfies $(f0)-(f4)$. Then for any $u\in\mathscr{X}\backslash\{0\}$, following statements are true:

(i) There exists a unique number $s_0=s(u)\in\mathbb{R}$ such that $P(s_0\star u)=0$;

(ii) $I(s_0\star u)>I(s\star u)$ for any $s\neq s_0$. Moreover, $I(s_0\star u)>0$;

(iii) The mapping $u\mapsto s_0=s(u)$ is continuous in $u\in\mathscr{X}\backslash\{0\}$;

(iv) $s(u(\cdot+y))=s(u)$ for any $y\in\mathbb{R}^N$. Moreover if $f$ is odd, then we have $s(-u)=s(u)$.
\end{lemma}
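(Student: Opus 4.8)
The plan is to reduce all four claims to the study of the single real function $g_u(s):=I(s\star u)$, whose $s$-derivative turns out to be exactly the Pohozaev functional along the fiber. First I would record the scaling identities \eqref{bian_huan} together with the companion identity
\begin{equation*}
\int_{\mathbb{R}^N}\widetilde{F}(s\star u)\,dx=(e^s)^{p_c+p}\int_{\mathbb{R}^N}\Lambda(e^{\frac{Ns}{2}}u)\,{\vert u\vert}^{p_*}\,dx,
\end{equation*}
obtained by the change of variables $y=e^sx$ and the definition \eqref{eq2.7} of $\Lambda$, using that $Np_*/2-N=p_c+p$. Differentiating \eqref{eq2.6} in $s$ and applying the same change of variables to the nonlinear term, one checks the key relation $\tfrac{d}{ds}g_u(s)=P(s\star u)$, and after factoring,
\begin{equation*}
P(s\star u)=(e^s)^{p_c+p}\,\varphi_u(s),\qquad \varphi_u(s):=\frac{p_c+p}{p}{\Vert\nabla u\Vert}^p_{L^p}+\frac{p_c}{p}(e^s)^{-p}{\Vert u\Vert}^p_{L^p}-\frac{N}{2}\int_{\mathbb{R}^N}\Lambda(e^{\frac{Ns}{2}}u)\,{\vert u\vert}^{p_*}\,dx.
\end{equation*}

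For (i) I would show $\varphi_u$ is strictly decreasing on $\mathbb{R}$: the first term is constant, the second is nonincreasing (strictly so when $p>2$), and by Remark \ref{re2.2} the map $s\mapsto\Lambda(e^{Ns/2}u(x))$ is strictly increasing for every $x$ with $u(x)\neq0$, so the last term is strictly decreasing whenever $u\neq0$. Since Lemma \ref{lem2.3} gives $\widetilde{F}>0$, hence $\Lambda\geq0$ with $\Lambda(t)\to+\infty$ as $\vert t\vert\to\infty$ (combine $\widetilde{F}(t)>(p_*-2)F(t)$ with $(f3)$), monotone convergence yields $\varphi_u(s)\to-\infty$ as $s\to+\infty$, while Remark \ref{re2.1} applied to $s\star u$ (whose gradient norm tends to $0$) gives $\varphi_u(s)>0$ for $s$ sufficiently negative. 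Strict monotonicity plus the intermediate value theorem then produce a unique zero $s_0=s(u)$ of $\varphi_u$, equivalently of $P(\cdot\star u)$.

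Because $(e^s)^{p_c+p}>0$, the sign of $g_u'=P(\cdot\star u)$ agrees with that of $\varphi_u$, so $g_u$ is strictly increasing on $(-\infty,s_0)$ and strictly decreasing on $(s_0,+\infty)$; this gives the strict global maximum $I(s_0\star u)>I(s\star u)$ for $s\neq s_0$ in (ii), and positivity $I(s_0\star u)>0$ follows since Lemma \ref{lem2.1}(i) forces $g_u(s)>0$ for $s\ll0$. For the continuity (iii) I would argue by compactness rather than the implicit function theorem (unavailable since $f$ is only continuous): given $u_n\to u\neq0$ in $\mathscr{X}$, I would first show $\{s(u_n)\}$ is bounded — if $s(u_n)\to+\infty$ the $\Lambda$-term in $\varphi_{u_n}(s(u_n))=0$ blows up by Fatou while the other terms stay bounded, and if $s(u_n)\to-\infty$ the term $\frac{p_c}{p}(e^s)^{-p}{\Vert u_n\Vert}^p_{L^p}$ blows up (resp., for $p=2$, the constant term stays bounded away from $0$) while the $\Lambda$-term vanishes, both contradictions. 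Any subsequential limit $\bar s$ then satisfies $P(\bar s\star u)=0$ by continuity of $P$ on $\mathscr{X}$ and of the fiber action, so $\bar s=s(u)$ by uniqueness, whence $s(u_n)\to s(u)$.

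Finally, (iv) is a symmetry statement. Writing $u_y=u(\cdot+y)$ one has $s\star u_y=(s\star u)(\cdot+e^{-s}y)$, and since ${\Vert\nabla\cdot\Vert}_{L^p}$, ${\Vert\cdot\Vert}_{L^p}$ and $\int_{\mathbb{R}^N}\widetilde{F}(\cdot)\,dx$ are translation invariant, $P(s\star u_y)=P(s\star u)$; uniqueness in (i) then yields $s(u_y)=s(u)$. If $f$ is odd then $F$ and $\widetilde{F}$ are even and $s\star(-u)=-(s\star u)$, so $P(s\star(-u))=P(s\star u)$ and again $s(-u)=s(u)$. The main obstacle is the strict monotonicity of $\varphi_u$ uniformly in $p\in[2,N)$: this is precisely where $(f4)$ (through $\Lambda$) is indispensable, and the degenerate case $p=2$, where $p_c=0$ removes the $L^p$-contribution, must be handled with care in both the monotonicity and the boundedness arguments.
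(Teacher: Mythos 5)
Your proposal is correct and follows essentially the same route as the paper: the key identity $\frac{d}{ds}I(s\star u)=P(s\star u)$, the factorization $P(s\star u)=(e^s)^{p_c+p}\varphi_u(s)$ with the $\Lambda$-term from \eqref{eq2.7}, strict monotonicity of $\varphi_u$ via $(f4)$ and Remark \ref{re2.2} for uniqueness, a compactness/uniqueness argument for (iii), and change of variables for (iv). The only differences are tactical rather than structural — you obtain existence of $s_0$ from the sign change of $\varphi_u$ (Remark \ref{re2.1} for $s\ll0$, monotone convergence for $s\to+\infty$) instead of the global maximum of $I(s\star u)$ furnished by Lemma \ref{lem2.2}, and you bound $\{s(u_n)\}$ in (iii) directly from the Pohozaev identity $\varphi_{u_n}(s(u_n))=0$ (splitting $p>2$ and $p=2$) rather than from energy estimates as in the paper; both variants are sound.
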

\begin{proof}
(i) Since
\begin{equation*}
     \begin{aligned}
          I(s\star u) &=\frac{1}{p}\int_{\mathbb{R}^N}{\vert\nabla(s\star u)\vert}^pdx+\frac{1}{p}\int_{\mathbb{R}^N}{\vert s\star u\vert}^pdx-\int_{\mathbb{R}^N}F(s\star u)dx\\
          & =\frac{1}{p}(e^s)^{p_c+p}\int_{\mathbb{R}^N}{\vert\nabla u\vert}^pdx+\frac{1}{p}e^{sp_c}\int_{\mathbb{R}^N}{\vert u\vert}^pdx-e^{-sN}\int_{\mathbb{R}^N}F(e^{\frac{Ns}{2}}u)dx,
     \end{aligned}
\end{equation*}
\begin{equation*}
     \begin{aligned}
          P(s\star u) &=\frac{p_c+p}{p}\int_{\mathbb{R}^N}{\vert\nabla(s\star u)\vert}^pdx+\frac{p_c}{p}\int_{\mathbb{R}^N}{\vert s\star u\vert}^pdx-\frac{N}{2}\int_{\mathbb{R}^N}\widetilde{F}(s\star u)dx\\
          & =\frac{p_c+p}{p}(e^s)^{p_c+p}\int_{\mathbb{R}^N}{\vert\nabla u\vert}^pdx+\frac{p_c}{p}e^{sp_c}\int_{\mathbb{R}^N}{\vert u\vert}^pdx-\frac{N}{2}e^{-sN}\int_{\mathbb{R}^N}\widetilde{F}(e^{\frac{Ns}{2}}u)dx,
     \end{aligned}
\end{equation*}
\begin{equation*}
     \begin{aligned}
          \frac{d}{ds}I(s\star u) &=\frac{p_c+p}{p}(e^s)^{p_c+p}\int_{\mathbb{R}^N}{\vert\nabla u\vert}^pdx+\frac{p_c}{p}e^{sp_c}\int_{\mathbb{R}^N}{\vert u\vert}^pdx-\frac{N}{2}e^{-sN}\int_{\mathbb{R}^N}\widetilde{F}(e^{\frac{Ns}{2}}u)dx\\
          & =P(s\star u).
     \end{aligned}
\end{equation*}
By Lemma \eqref{lem2.2}, we have
\begin{equation*}
     \lim\limits_{s\to-\infty}I(s\star u)=0^+, \qquad \lim\limits_{s\to+\infty}I(s\star u)=-\infty.
\end{equation*}
Hence, $I(s\star u)$ reaches its global maximum at some $s_0=s(u)\in\mathbb{R}$, then
\begin{equation*}
     P(s_0\star u)=\frac{d}{ds}I(s_0\star u)=0.
\end{equation*}
Note that $\widetilde{F}(t)=\Lambda(t){\vert t\vert}^{p_*}$ for any $t\in\mathbb{R}$,
where $\Lambda$ is defined by \eqref{eq2.7}, we have
\begin{equation*}
     \begin{aligned}
          P(s\star u) &=\frac{p_c+p}{p}(e^s)^{p_c+p}\int_{\mathbb{R}^N}{\vert\nabla u\vert}^pdx+\frac{p_c}{p}e^{sp_c}\int_{\mathbb{R}^N}{\vert u\vert}^pdx-\frac{N}{2}e^{-sN}\int_{\mathbb{R}^N}\widetilde{F}(e^{\frac{Ns}{2}}u)dx\\
          & =\frac{p_c+p}{p}(e^s)^{p_c+p}\int_{\mathbb{R}^N}{\vert\nabla u\vert}^pdx+\frac{p_c}{p}e^{sp_c}\int_{\mathbb{R}^N}{\vert u\vert}^pdx-\frac{N}{2}e^{-sN}\int_{\mathbb{R}^N}\Lambda(e^{\frac{Ns}{2}}u)e^{\frac{(N+2)sp}{2}}{\vert u\vert}^{p_*}dx\\
          & =(e^s)^{p_c+p}\left[\frac{p_c+p}{p}\int_{\mathbb{R}^N}{\vert\nabla u\vert}^pdx+\frac{p_c}{p}e^{-sp}\int_{\mathbb{R}^N}{\vert u\vert}^pdx-\frac{N}{2}\int_{\mathbb{R}^N}\Lambda(e^{\frac{Ns}{2}}u){\vert u\vert}^{p_*}dx\right]
     \end{aligned}
\end{equation*}
Thanks to $(f4)$ and Remark \ref{re2.2}, for any fixed $t\in\mathbb{R}\backslash\{0\}$, the function $s\mapsto\Lambda(e^{Ns/2}t)$ is strictly increasing. Hence, $s_0=s(u)$ is unique and the mapping $u\mapsto s(u)$ is well-defined.

(ii) The result is the direct conclusion proved above.

(iii) Let $u\in\mathscr{X}\backslash\{0\}$ and $\{u_n\}\subset\mathscr{X}\backslash\{0\}$ be any sequence such that $u_n\to u$ as $n\to\infty$ in $\mathscr{X}$.
Set $s_n:=s(u_n)$ for any $n\geq1$, then we only need to prove that, up to a subsequence if necessary, $s_n\to s_0=s(u)$ as $n\to+\infty$.

Firstly, we prove that $\{s_n\}$ is bounded. Clearly, $\Gamma_c$ defined in \eqref{eq2.5} is a continuous coercive function. Moreover, by Lemma \eqref{lem2.3}, $\Gamma_0(t)\geq0$ for any $t\in\mathbb{R}$. If $s_n\to+\infty$, up to a subsequence, by Fatou's lemma and the fact that $u_n\to u$ a.e. in $\mathbb{R}^N$ and $u\neq0$, then we have
\begin{equation*}
     \lim\limits_{n\to+\infty}\int_{\mathbb{R}^N}\Gamma_0(e^{\frac{Ns_n}{2}}u_n){\vert u_n\vert}^{p_*}dx=+\infty.
\end{equation*}
Using the conclusion (ii) and \eqref{eq2.6} with $c=0$, we know that as $n\to\infty$,
\begin{equation}\label{eq2.12}
          0\leq(e^{-s_n})^{p_c+p}I(s_n\star u_n)=\frac{1}{p}\int_{\mathbb{R}^N}{\vert\nabla u\vert}^pdx+\frac{1}{p}e^{-ps_n}\int_{\mathbb{R}^N}{\vert u_n\vert}^pdx-\int_{\mathbb{R}^N}\Gamma_0(e^{\frac{Ns_n}{2}}u_n){\vert u_n\vert}^{p_*}dx\to-\infty,
\end{equation}
which is impossible. Then $\{s_n\}$ is bounded from above. On the other hand, (ii) implies that
\begin{equation*}
     I(s_n\star u_n)\geq I(s_0\star u_n) \quad \text{ for any } n\geq1.
\end{equation*}
Since $s_0\star u_n\to s_0\star u$ in $\mathscr{X}$, it follows that $I(s_0\star u_n)=I(s_0\star u)+o_n(1)$
and
\begin{equation}\label{eq2.13}
     \liminf\limits_{n\to+\infty}I(s_n\star u_n)\geq I(s_0\star u)>0.
\end{equation}
We can choose $m>0$ large enough such that $\{s_n\star u_n\}\subset B_m$. From Lemma \ref{lem2.1}(i), \eqref{bian_huan} and \eqref{eq2.13}, $\{s_n\}$ is bounded from below.

Without lose of generality, we can assume that:
\begin{equation*}
     \lim_{n\to\infty}s_n=\hat{s}\text{ for some }\hat{s}\in\mathbb{R}.
\end{equation*}
Since $\lim_{n\to\infty}u_n=u$ in $\mathscr{X}$, then $s_n\star u_n\to \hat{s}\star u$ in $\mathscr{X}$. Thanks to $P(s_n\star u_n)=0$ for $n\geq1$, we have $P(\hat{s}\star u)=0$. It follows from conclusion (i) that $\hat{s}=s_0$, which implies conclusion (iii).

(iv) By changing variables in the integrals, we have
\begin{equation*}
     P(s(u)\star u(\cdot+y))=P(s(u)\star u(\cdot))=0\text{ for any }y\in\mathbb{R}^N.
\end{equation*}
It follows from conclusion (i) that $s(u(\cdot+y))=s(u)$. Moreover, if $f$ is an odd function, it holds
\begin{equation*}
     P(s(u)\star (-u))=P(-(s(u)\star u))=P(s(u)\star u)=0,
\end{equation*}
which leads to $s(-u)=s(u)$.
\end{proof}

At the end of this section, we have the following result about the Pohozaev manifold
$$\mathcal{P}_m:=\left\{u\in S_m : P(u)=0\right\}$$
and the energy functional $I$ constrained to $\mathcal{P}_m$.

\begin{lemma}\label{lem2.5}
Assume $N\geq3$ and $f$ satisfies $(f0)-(f4)$, then following statements hold:

(i) $\mathcal{P}_m\neq\emptyset$;

(ii) $\inf_{u\in\mathcal{P}_m}{\Vert\nabla u_n\Vert}_{L^p(\mathbb{R}^N)}>0$;

(iii) $\inf_{u\in\mathcal{P}_m}I(u)>0$;

(iv) $I$ is coercive on $\mathcal{P}_m$, that is, for every sequence $\{u_n\}\subset\mathcal{P}_m$,
\begin{equation*}
     {\Vert u_n\Vert}_\mathscr{X}\to+\infty\text{ implies }I(u_n)\to+\infty\text{ as }n\to\infty.
\end{equation*}
\end{lemma}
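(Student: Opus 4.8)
The plan is to establish the four assertions in order, drawing on the fiber-map analysis of Lemma \ref{lem2.4}, the sign information of Lemma \ref{lem2.3}, and the small-gradient estimates of Lemma \ref{lem2.1} and Remark \ref{re2.1}. For (i), I start from any $u\in S_m$ (nonempty since $m>0$), which is nonzero, and invoke Lemma \ref{lem2.4}(i) to produce $s_0=s(u)$ with $P(s_0\star u)=0$; since the $\star$-action preserves the $L^2$-norm, $s_0\star u\in S_m$, hence $s_0\star u\in\mathcal P_m$. For (ii), I argue that every $u\in\mathcal P_m$ satisfies $\|\nabla u\|_{L^p(\mathbb R^N)}>\delta$, where $\delta=\delta(N,p,m)$ is the threshold from Remark \ref{re2.1}: if instead $\|\nabla u\|_{L^p(\mathbb R^N)}\le\delta$, then $u\in B_m$ and Remark \ref{re2.1} gives $0=P(u)\ge\frac{N(p-2)+2(p-1)}{2p}\|\nabla u\|_{L^p(\mathbb R^N)}^p$, forcing $\nabla u\equiv0$ and thus $u\equiv0$, contradicting $\|u\|_{L^2(\mathbb R^N)}^2=m>0$. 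Hence $\inf_{\mathcal P_m}\|\nabla u\|_{L^p(\mathbb R^N)}\ge\delta>0$.

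For (iii), I exploit the maximality in Lemma \ref{lem2.4}(ii): for $u\in\mathcal P_m$ one has $s(u)=0$, so $I(u)\ge I(s\star u)$ for every $s$. Because $\|\nabla(s\star u)\|_{L^p(\mathbb R^N)}^p=(e^s)^{p_c+p}\|\nabla u\|_{L^p(\mathbb R^N)}^p$ decreases continuously to $0$ as $s\to-\infty$ while exceeding $\delta^p$ at $s=0$ by (ii), I may pick $s^*\le0$ with $\|\nabla(s^*\star u)\|_{L^p(\mathbb R^N)}=\delta$; then $s^*\star u\in S_m\subset B_m$ and Lemma \ref{lem2.1}(i) yields $I(u)\ge I(s^*\star u)\ge\frac1{2p}\|\nabla(s^*\star u)\|_{L^p(\mathbb R^N)}^p=\frac{\delta^p}{2p}$. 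This lower bound is uniform over $\mathcal P_m$, so $\inf_{\mathcal P_m}I>0$.

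The heart of the lemma is (iv). The starting point is the identity obtained from $I-\frac1{p_c+p}P$ together with $P(u)=0$ and the algebraic relation $2(p_c+p)=N(p_*-2)$, namely
\begin{equation*}
I(u)=\frac1{p_c+p}\|u\|_{L^p(\mathbb R^N)}^p+\frac1{p_*-2}\int_{\mathbb R^N}\big(f(u)u-p_*F(u)\big)\,dx\qquad(u\in\mathcal P_m),
\end{equation*}
in which both terms are nonnegative by Lemma \ref{lem2.3}. Since $\|u_n\|_{L^2(\mathbb R^N)}^2=m$ is fixed, $\|u_n\|_{\mathscr X}\to\infty$ is equivalent to $\|\nabla u_n\|_{L^p(\mathbb R^N)}^p+\|u_n\|_{L^p(\mathbb R^N)}^p\to\infty$. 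If $\|u_n\|_{L^p(\mathbb R^N)}\to\infty$ along a subsequence the first term alone forces $I(u_n)\to\infty$, so the delicate regime is $\|\nabla u_n\|_{L^p(\mathbb R^N)}\to\infty$ with $\|u_n\|_{L^p(\mathbb R^N)}$ bounded. There $P(u_n)=0$ gives $\int_{\mathbb R^N}\widetilde F(u_n)\,dx\to\infty$, and the task reduces to showing $\int_{\mathbb R^N}\big(f(u_n)u_n-p_*F(u_n)\big)\,dx\to\infty$.

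This last step is where the relationship between $p$ and $N$ becomes essential, and where I expect the real difficulty. The naive linear combination $I-\lambda P$ that would retain a positive coefficient on $\|\nabla u\|_{L^p(\mathbb R^N)}^p$ is ruled out precisely because $2(p_c+p)=N(p_*-2)$ makes that coefficient degenerate, so no purely algebraic estimate suffices. Instead I would rescale by the $\star$-action to normalize $\|\nabla w_n\|_{L^p(\mathbb R^N)}=1$ (this keeps $\|w_n\|_{L^2(\mathbb R^N)}^2=m$ and, at least for $p>2$, drives $\|w_n\|_{L^p(\mathbb R^N)}\to0$, leaving $\{w_n\}$ bounded in $\mathscr X$), and then use the strict subcriticality encoded in $(f1)$ and $(f2)$ via the Gagliardo--Nirenberg and Sobolev inequalities, together with the monotonicity $(f4)$ and Lemma \ref{lem2.3}, to preclude the scaling-invariant concentration that would otherwise keep $\int_{\mathbb R^N}\big(f(u_n)u_n-p_*F(u_n)\big)\,dx$ bounded while the gradient blows up. Controlling this concentration under the mass constraint $\|u_n\|_{L^2(\mathbb R^N)}^2=m$ is the crux; the remaining bookkeeping is routine once it is settled.
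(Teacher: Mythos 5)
Parts (i)--(iii) of your proposal are correct and essentially coincide with the paper's own arguments: (i) is Lemma \ref{lem2.4}(i) applied to any $u\in S_m$; your (ii) is in fact slightly sharper than the paper's sequential contradiction, since it extracts the quantitative bound $\inf_{u\in\mathcal{P}_m}\Vert\nabla u\Vert_{L^p(\mathbb{R}^N)}\geq\delta$ directly from Remark \ref{re2.1}; and (iii) is exactly the paper's proof (rescale so that $\Vert\nabla(s^{*}\star u)\Vert_{L^p(\mathbb{R}^N)}=\delta$ and apply Lemma \ref{lem2.1}(i) together with the maximality in Lemma \ref{lem2.4}(ii)). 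Your identity for (iv),
\begin{equation*}
I(u)=\frac{1}{p_c+p}\Vert u\Vert^p_{L^p(\mathbb{R}^N)}+\frac{1}{p_*-2}\int_{\mathbb{R}^N}\bigl(f(u)u-p_*F(u)\bigr)dx \quad\text{ for } u\in\mathcal{P}_m,
\end{equation*}
is also correct (it rests on $2(p_c+p)=N(p_*-2)$ and on Lemma \ref{lem2.3} for the signs), and it genuinely streamlines part of the paper's proof: the regimes the paper treats as Case 2 and Case 3, where $\Vert u_n\Vert_{L^p(\mathbb{R}^N)}\to\infty$, follow at once from the first term.

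However, (iv) contains a genuine gap. In the remaining regime --- $\Vert\nabla u_n\Vert_{L^p(\mathbb{R}^N)}\to\infty$ with $\Vert u_n\Vert_{L^p(\mathbb{R}^N)}$ bounded, the paper's Case 1 --- you must show $\int_{\mathbb{R}^N}\bigl(f(u_n)u_n-p_*F(u_n)\bigr)dx\to+\infty$, and you do not: you identify it as ``the crux,'' say you \emph{would} rescale and invoke subcriticality, $(f4)$ and Lemma \ref{lem2.3} ``to preclude scaling-invariant concentration,'' and declare the rest routine. That deferred step is the entire content of the lemma, and nothing in your sketch performs it. Concretely, the paper resolves this regime by a concentration-compactness dichotomy on the rescaled sequence $v_n:=(-s_n)\star u_n$ (your $w_n$): setting $\rho:=\limsup_n\sup_{y\in\mathbb{R}^N}\int_{B(y,1)}\vert v_n\vert^pdx$, either $\rho>0$, in which case one translates, extracts a nontrivial a.e.\ limit, and uses Fatou's lemma together with $(f3)$ (coercivity of $F(t)/\vert t\vert^{p_*}$) in the rescaled expression \eqref{eq2.6} to contradict $I(u_n)\geq 0$; or $\rho=0$, in which case Lemma \ref{Lemma I.1} gives $v_n\to0$ in $L^{p_*}(\mathbb{R}^N)$, Lemma \ref{lem2.1}(ii) makes the nonlinear term negligible, and the fiber-map maximality $I(u_n)=I(s_n\star v_n)\geq I(s\star v_n)$ for every fixed $s$ (valid since $s(u_n)=0$ by Lemma \ref{lem2.4}) yields $I(u_n)\geq\frac{1}{p}e^{s(p_c+p)}\Vert\nabla v_n\Vert^p_{L^p(\mathbb{R}^N)}+o_n(1)$, which is unbounded in $s$. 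Neither mechanism --- the dichotomy, Fatou plus $(f3)$ in the non-vanishing case, or the Lions-type lemma plus fiber maximality in the vanishing case --- appears concretely in your plan; in particular it is not apparent how Gagliardo--Nirenberg and Sobolev estimates alone could close this case, since the divergence of $\int_{\mathbb{R}^N}\bigl(f(u_n)u_n-p_*F(u_n)\bigr)dx$ is ultimately driven by $(f3)$ and by the variational structure of the fiber map, not by growth estimates. Until this step is carried out, (iv) is unproven.
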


Following Lions lemma (see \cite{LionsAIHPAN1984}) is needed in the proof of Lemma \ref{lem2.5}.

\begin{lemma}\label{Lemma I.1}
Let $1<p\leq+\infty$, $1\leq q<+\infty$ with $q\neq p^*$ if $p<N$. Assume that $u_n$ is bounded in $L^{q}(\mathbb{R}^N)$ and $\nabla u_n$ is bounded in $L^p(\mathbb{R}^N)$ such  that
\begin{equation*}
    \lim_{n\to\infty}\sup\limits_{y\in\mathbb{R}^N}\int_{y+B_R}{\vert u_n\vert}^qdx=0 \text{ for some } R>0,
\end{equation*}
then $\lim_{n\to\infty}u_n=0$ in $L^\alpha(\mathbb{R}^N)$ for some $\alpha$ between $q$ and $p^*$.
\end{lemma}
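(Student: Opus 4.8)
\section*{Proof proposal}

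The plan is to prove this as the classical Lions vanishing lemma, reducing the global statement to a single local interpolation inequality on balls of the fixed radius $R$ via a finite-overlap covering. Throughout I focus on the case $p<N$ that is relevant to this paper (for $p\ge N$, where $p^*$ is read as $+\infty$, the embedding $W^{1,p}(y+B_R)\hookrightarrow L^r(y+B_R)$ holds for every finite $r$ and one argues identically with a large finite exponent in place of $p^*$). First I would fix a countable cover of $\mathbb{R}^N$ by balls $\{y_i+B_R\}_i$ with centers on a lattice chosen so that every point lies in at most $\kappa=\kappa(N)$ of them; this finite-overlap property gives $\sum_i\int_{y_i+B_R}g\le\kappa\int_{\mathbb{R}^N}g$ for every $g\ge0$, which is what makes the additive quantities $\sum_i\int_{y_i+B_R}|\nabla u_n|^p$ and $\sum_i\int_{y_i+B_R}|u_n|^q$ comparable to the global integrals, hence uniformly bounded.

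The heart of the matter is a local inequality on a single ball $B:=y+B_R$. We may assume $q<p^*$ (the case $q>p^*$ is symmetric), so that, using $q\neq p^*$, the endpoints $q$ and $p^*$ are genuinely distinct and there is room to interpolate. I would choose the target exponent explicitly as $\alpha:=q+p\cdot\frac{p^*-q}{p^*}$, which one checks satisfies $\max\{p,q\}<\alpha<p^*$; writing $\frac1\alpha=\frac{1-\lambda}{q}+\frac{\lambda}{p^*}$ then forces $\lambda=p/\alpha\in(0,1)$, that is, $\lambda\alpha=p$. Logarithmic convexity of $L^r$-norms gives $\|u\|_{L^\alpha(B)}\le\|u\|_{L^q(B)}^{1-\lambda}\|u\|_{L^{p^*}(B)}^{\lambda}$. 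Raising to the power $\alpha$ and applying the Sobolev embedding on the ball in the form $\|u\|_{L^{p^*}(B)}^{p}\le C(\|\nabla u\|_{L^p(B)}^p+\|u\|_{L^p(B)}^p)$ --- the entire point of the choice $\lambda\alpha=p$ being that the Sobolev factor now appears to the \emph{first} power in the $L^p$-integrals --- yields
\[
\int_B|u|^\alpha\,dx\le C\Big(\int_B|u|^q\,dx\Big)^{(1-\lambda)\alpha/q}\Big(\int_B|\nabla u|^p\,dx+\int_B|u|^p\,dx\Big),
\]
where the exponent $(1-\lambda)\alpha=\alpha-p>0$ is strictly positive.

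I would then sum over the cover. Bounding the vanishing factor by its supremum $\sigma:=\sup_y\int_{y+B_R}|u|^q$ and pulling the positive power $(1-\lambda)\alpha/q$ outside, the remaining sum of $\int_{B_i}|\nabla u|^p$ is controlled by $\kappa\|\nabla u\|_{L^p(\mathbb{R}^N)}^p$. Applied to $u=u_n$ this gives $\|u_n\|_{L^\alpha(\mathbb{R}^N)}^\alpha\le C\,\sigma_n^{(1-\lambda)\alpha/q}\big(\|\nabla u_n\|_{L^p}^p+(\text{the }L^p\text{ contribution})\big)$, and since $\sigma_n\to0$ by hypothesis while the bracketed factor stays bounded, the conclusion $u_n\to0$ in $L^\alpha(\mathbb{R}^N)$ follows.

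The main obstacle is the lower-order term $\int_B|u|^p$ produced by the \emph{inhomogeneous} Sobolev embedding on a bounded ball, together with the fact that $L^p$-boundedness of $\{u_n\}$ is not assumed; this is precisely where the interplay between $p$ and $N$ enters (cf.\ the difficulties flagged for Lemmas \ref{lem2.5} and \ref{lem4.6}). I would dispose of it by a dichotomy. If $q\ge p$, then on the fixed-radius ball Hölder's inequality gives $\|u\|_{L^p(B)}\le|B_R|^{1/p-1/q}\|u\|_{L^q(B)}$, so $\int_B|u|^p$ is absorbed into the $L^q$-factor \emph{before} summing; the resulting sum then contains only $(\int_{B_i}|u|^q)^{\alpha/q}$ with $\alpha/q>1$, which is bounded by $\sigma^{\alpha/q-1}\kappa\|u\|_{L^q}^q\to0$, so no global $L^p$ bound is ever needed. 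If instead $q<p$, then $p$ lies strictly between $q$ and $p^*$, so the \emph{global} Gagliardo--Nirenberg inequality $\|u\|_{L^p(\mathbb{R}^N)}\le C\|\nabla u\|_{L^p(\mathbb{R}^N)}^{\theta}\|u\|_{L^q(\mathbb{R}^N)}^{1-\theta}$ shows that $\{u_n\}$ is automatically bounded in $L^p(\mathbb{R}^N)$, whence $\sum_i\int_{B_i}|u_n|^p\le\kappa\|u_n\|_{L^p}^p$ is uniformly bounded and the term is controlled directly. In either case the summation closes, completing the proof.
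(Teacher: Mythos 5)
Your proof is correct, and it essentially coincides with the paper's treatment of this statement: the paper does not prove the lemma but quotes it from Lions \cite{LionsAIHPAN1984}, and your argument --- a finite-overlap covering of $\mathbb{R}^N$ by balls of radius $R$, H\"older interpolation on each ball with the exponent $\alpha=p+q-pq/p^*$ normalized precisely so that $\lambda\alpha=p$, and the local Sobolev embedding, followed by summation against the vanishing factor --- is exactly the classical proof in that reference. Your dichotomy for the lower-order term $\int_B\vert u\vert^p$ (H\"older on the fixed ball when $q\geq p$, the global Gagliardo--Nirenberg inequality to get an $L^p$ bound when $q<p$) correctly closes the summation without any a priori $L^p$ bound and checks out in both cases, including the symmetric case $q>p^*$.
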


\noindent\textit{The proof of Theorem \ref{lem2.5}}.
(i) This conclusion is a direct result of Lemma \ref{lem2.4}(i).

(ii) If there exists $\{u_n\}\subset\mathcal{P}_m$ such that ${\Vert\nabla u_n\Vert}_{L^p(\mathbb{R}^N)}\to0$ as $n\to\infty$, then it follows from Remark \ref{re2.1} that
\begin{equation*}
    0=P(u_n)\geq\frac{N(p-2)+2(p-1)}{2p}\int_{\mathbb{R}^N}{\vert\nabla u_n\vert}^pdx+\frac{p_c}{p}\int_{\mathbb{R}^N}{\vert u_n\vert}^pdx>0
\end{equation*}
for $n>0$ large enough, which is a contradiction. Thus, $\inf_{u\in\mathcal{P}_m}{\Vert\nabla u_n\Vert}_{L^p(\mathbb{R}^N)}>0$.

(iii) Thanks to Lemma \ref{lem2.4}, $I(u)=I(0\star u)\geq I(s\star u)$ for any $u\in\mathcal{P}_m$ and $s\in\mathbb{R}$. For any small $\delta>0$ given by Lemma \ref{lem2.1}(i), we set
\begin{equation*}
    s:=\frac{p}{p_c+p}\ln{\left(\frac{\delta}{{\Vert\nabla u_n\Vert}_{L^p(\mathbb{R}^N)}}\right)}.
\end{equation*}
A simple calculation yields that ${\Vert\nabla(s\star u_n)\Vert}_{L^p(\mathbb{R}^N)}=\delta$. Due to Lemma \ref{lem2.1}(i), we have
\begin{equation*}
    I(u)\geq I(s\star u)\geq\frac{1}{2p}\int_{\mathbb{R}^N}({\vert\nabla(s\star u)\vert}^p+{\vert s\star u\vert}^p)dx\geq\frac{1}{2p}\int_{\mathbb{R}^N}{\vert\nabla(s\star u)\vert}^pdx=\frac{1}{2p}\delta^p>0.
\end{equation*}
Then conclusion (iii) holds.

(iv) Suppose there exists a sequence $\{u_n\}\subset\mathcal{P}_m$ and a constant $C>0$ such that
$$\lim\limits_{n\to\infty}{\Vert u_n\Vert}_\mathscr{X}=+\infty\text{ and }\sup\limits_{n\geq1}I(u_n)\leq C.$$
 We consider the following three cases.

\textbf{Case 1}: For any $n\geq1$, there exists a large number $M_1>0$ such that $\int_{\mathbb{R}^N}{\vert u_n\vert}^pdx\leq M_1$. Note that $\lim_{n\to\infty}{\Vert u_n\Vert}_\mathscr{X}=\infty$, then $\lim_{n\to\infty}\int_{\mathbb{R}^N}{\vert\nabla u_n\vert}^pdx=+\infty$. Without loss of generality, for any $n\geq1$, there exists $M_2>0$ such that $\int_{\mathbb{R}^N}{\vert\nabla u_n\vert}^pdx\geq M_2$. Set
\begin{equation*}
    s_n:=\frac{1}{p_c+p}\ln{\left(\int_{\mathbb{R}^N}{\vert\nabla u_n\vert}^pdx\right)} \text{ and }v_n:=(-s_n)\star u_n.
\end{equation*}
Clearly, $s_n\to+\infty$ as $n\to\infty$, $\{v_n\}\subset S_m$ and
\begin{equation*}
    \int_{\mathbb{R}^N}{\vert\nabla v_n\vert}^pdx=\int_{\mathbb{R}^N}{\vert\nabla((-s_n)\star u_n)\vert}^pdx=(-s_n)^{p_c+p}\int_{\mathbb{R}^N}{\vert\nabla u_n\vert}^pdx=1,
\end{equation*}
\begin{equation*}
          \int_{\mathbb{R}^N}{\vert v_n\vert}^pdx=\int_{\mathbb{R}^N}{\vert(-s_n)\star u_n\vert}^pdx=(-s_n)^{p_c}\int_{\mathbb{R}^N}{\vert u_n\vert}^pdx=\frac{\int_{\mathbb{R}^N}{\vert u_n\vert}^pdx}{\left({\int_{\mathbb{R}^N}{\vert\nabla u_n\vert}^pdx}\right)^{\hat{p}}}\leq\frac{M_1}{{M_2}^{\hat{p}}},
\end{equation*}
where $\hat{p}=1-p/(p_c+p)$. Thus, both ${\Vert v_n\Vert}_{L^p(\mathbb{R}^N)}$ and ${\Vert\nabla v_n\Vert}_{L^p(\mathbb{R}^N)}$ are bounded for $n\geq1$.

\textbf{Case 2}: For any $n\geq1$, there exists a large number $M_1>0$ such that $\int_{\mathbb{R}^N}{\vert\nabla u_n\vert}^pdx\leq M_1$. Note that $\lim_{n\to\infty}{\Vert u_n\Vert}_\mathscr{X}=+\infty$, then $\lim_{n\to\infty}\int_{\mathbb{R}^N}{\vert u_n\vert}^pdx=+\infty$. Without loss of generality, for any $n\geq1$, there exists $M_2>0$ such that $\int_{\mathbb{R}^N}{\vert u_n\vert}^pdx\geq M_2$. Note that $\{u_n\}\subset\mathcal{P}_m$, that is, ${\Vert u_n\Vert}_{L^2(\mathbb{R}^N)}=m<+\infty$, then for $p\neq2$, we can set
\begin{equation*}
    s_n:=\frac{1}{p_c}\ln{\left(\int_{\mathbb{R}^N}{\vert u_n\vert}^pdx\right)} \text{ and } v_n:=(-s_n)\star u_n.
\end{equation*}
Then $\lim_{n\to\infty}s_n=+\infty$ and $\{v_n\}\subset S_m$. Moreover, $\int_{\mathbb{R}^N}{\vert v_n\vert}^pdx=(-s_n)^{p_c}\int_{\mathbb{R}^N}{\vert u_n\vert}^pdx=1$ and
\begin{equation*}
    \int_{\mathbb{R}^N}{\vert\nabla v_n\vert}^pdx=(-s_n)^{p_c+p}\int_{\mathbb{R}^N}{\vert\nabla u_n\vert}^pdx=\frac{\int_{\mathbb{R}^N}{\vert\nabla u_n\vert}^pdx}{\left({\int_{\mathbb{R}^N}{\vert u_n\vert}^pdx}\right)^{1+\frac{p}{p_c}}}\leq\frac{M_1}{{M_2}^{1+\frac{p}{p_c}}}.
\end{equation*}
Therefore, both ${\Vert v_n\Vert}_{L^p(\mathbb{R}^N)}$ and ${\Vert\nabla v_n\Vert}_{L^p(\mathbb{R}^N)}$ are bounded for $n\geq1$.

\textbf{Case 3}: For any $n\geq1$, we have
\begin{equation*}
    \int_{\mathbb{R}^N}{\vert\nabla u_n\vert}^pdx\to+\infty \text{ and } \int_{\mathbb{R}^N}{\vert u_n\vert}^pdx\to+\infty\text{ as }n\to\infty.
\end{equation*}
Without loss of generality, we can set
\begin{equation*}
    s_n:=\min\left\{\frac{1}{p_c+p}\ln{\left(\int_{\mathbb{R}^N}{\vert\nabla u_n\vert}^pdx\right)}, \frac{1}{p_c}\ln{\left(\int_{\mathbb{R}^N}{\vert u_n\vert}^pdx\right)} \right\}.
\end{equation*}
Clearly, $s_n\to+\infty$ as $n\to\infty$, $\{v_n\}\subset S_m$ and
\begin{equation*}
    \int_{\mathbb{R}^N}{\vert\nabla v_n\vert}^pdx=(-s_n)^{p_c+p}\int_{\mathbb{R}^N}{\vert\nabla u_n\vert}^pdx=\frac{\int_{\mathbb{R}^N}{\vert\nabla u_n\vert}^pdx}{\max{\left\{\int_{\mathbb{R}^N}{\vert\nabla u_n\vert}^pdx, \left(\int_{\mathbb{R}^N}{\vert u_n\vert}^pdx \right)^{1+\frac{p}{p_c}}\right\}}},
\end{equation*}
\begin{equation*}
    \int_{\mathbb{R}^N}{\vert v_n\vert}^pdx=(-s_n)^{p_c}\int_{\mathbb{R}^N}{\vert u_n\vert}^pdx=\frac{\int_{\mathbb{R}^N}{\vert u_n\vert}^pdx}{\max{\left\{\left(\int_{\mathbb{R}^N}{\vert\nabla u_n\vert}^pdx \right)^{1-\frac{p}{p_c+p}}, \int_{\mathbb{R}^N}{\vert u_n\vert}^pdx\right\}}}.
\end{equation*}
Hence, we have
\begin{equation*}
    1=1+0\leq\int_{\mathbb{R}^N}{\vert\nabla v_n\vert}^pdx+\int_{\mathbb{R}^N}{\vert v_n\vert}^pdx\leq1+1=2\text{ for any } n\geq1,
\end{equation*}
which implies that both ${\Vert v_n\Vert}_{L^p(\mathbb{R}^N)}$ and ${\Vert\nabla v_n\Vert}_{L^p(\mathbb{R}^N)}$ are bounded for $n\geq1$.

To sum up, there exist two large positive constants $M_3$ and $M_4$ such that $\int_{\mathbb{R}^N}{\vert\nabla v_n\vert}^pdx\leq M_3$ and $\int_{\mathbb{R}^N}{\vert v_n\vert}^pdx\leq M_4$ for all $n\geq1$. Set
\begin{equation*}
    \rho:=\limsup\limits_{n\to+\infty}\left(\sup\limits_{y\in\mathbb{R}^N}\int_{B(y,1)}{\vert v_n\vert}^pdx\right)\ge0.
\end{equation*}

If $\rho>0$, up to a subsequence, there exists $\{y_n\}\subset\mathbb{R}^N$ and $w\in \mathscr{X}\setminus\{0\}$ such that
\begin{equation*}
    w_n:=v_n(\cdot+y_n)\rightharpoonup w \text{ in } \mathscr{X}\text{ and }w_n\to w \text{ a.e. in } \mathbb{R}^N.
\end{equation*}
From Lemma \ref{lem2.2}, Fatou's lemma and the fact that $\lim_{n\to\infty}s_n=+\infty$, we have
\begin{equation*}
     \lim\limits_{n\to+\infty}\int_{\mathbb{R}^N}\Gamma_0(e^{\frac{Ns}{2}}w_n){\vert w_n\vert}^{p_*}dx=+\infty,
\end{equation*}
where $\Gamma_0$ is defined in \eqref{eq2.5} with $c=0$.
Thanks to conclusion (iii) and \eqref{eq2.6} with $c=0$, we further have
\begin{equation*}
     \begin{aligned}
          0 & \leq(-s_n)^{p_c+p}I(u_n)=(-s_n)^{p_c+p}I(s_n\star v_n)\\
          & =\frac{1}{p}\int_{\mathbb{R}^N}{\vert\nabla v_n\vert}^pdx+\frac{1}{p}e^{-ps_n}\int_{\mathbb{R}^N}{\vert v_n\vert}^pdx-\int_{\mathbb{R}^N}\Gamma_0(e^{\frac{Ns_n}{2}}v_n){\vert v_n\vert}^{p_*}dx\\
          & \leq\frac{M_3}{p}+\frac{M_4}{p}e^{-ps_n}-\int_{\mathbb{R}^N}\Gamma_0(e^{\frac{Ns_n}{2}}w_n){\vert w_n\vert}^{p_*}dx\to-\infty\text{ as }n\to\infty,
     \end{aligned}
\end{equation*}
which is a contradiction.

We next consider the case $\rho=0$. Thanks to Lemma \ref{Lemma I.1}, $v_n\to0$ in $L^{p_*}(\mathbb{R}^N)$ as $n\to\infty$. Lemma \ref{lem2.1}(ii) implies that
\begin{equation*}
     \lim\limits_{n\to+\infty}e^{-Ns}\int_{\mathbb{R}^N}F(e^{\frac{Ns}{2}}v_n)dx=0 \text{ for any } s\in\mathbb{R}.
\end{equation*}

In case 1, it follows from $P(s_n\star v_n)=P(u_n)=0$ and Lemma \ref{lem2.4} that there exists $C>0$ large enough such that
\begin{equation*}
     \begin{aligned}
          C&\geq I(u_n)=I(s_n\star u_n) \geq I(s\star v_n)\\
          & =\frac{1}{p}e^{sp_c}\left(e^{sp}\int_{\mathbb{R}^N}{\vert\nabla v_n\vert}^pdx+\int_{\mathbb{R}^N}{\vert v_n\vert}^pdx\right)-e^{-sN}\int_{\mathbb{R}^N}F(e^{\frac{Ns}{2}}v_n)dx\\
          & \geq\frac{1}{p}e^{s(p_c+p)}\int_{\mathbb{R}^N}{\vert\nabla v_n\vert}^pdx-e^{-sN}\int_{\mathbb{R}^N}F(e^{\frac{Ns}{2}}v_n)dx\\
          & =\frac{1}{p}e^{s(p_c+p)}+o_n(1)
     \end{aligned}
\end{equation*}
for all $s\in\mathbb{R}$. However, $\frac{1}{p}e^{s(p_c+p)}>C$ for $s=\frac{1}{p_c+p}\ln{\left(Cp\right)}+1$. We arrive at a contradiction.

Case 2 implies a contradiction in a similar way. Indeed, it follows from $P(s_n\star v_n)=P(u_n)=0$ and Lemma \ref{lem2.4} that there exists $C>0$ large enough such that
\begin{equation*}
     \begin{aligned}
          C&\geq I(u_n)=I(s_n\star u_n)\geq I(s\star v_n)\\
          & \geq\frac{1}{p}e^{sp_c}\int_{\mathbb{R}^N}{\vert v_n\vert}^pdx-e^{-Ns}\int_{\mathbb{R}^N}F(e^{\frac{Ns}{2}}v_n)dx=\frac{1}{p}e^{sp_c}+o_n(1).
     \end{aligned}
\end{equation*}
for all $s\in\mathbb{R}$.  However, $\frac{1}{p}e^{sp_c}>C$ for $s=\frac{1}{p_c}\ln{\left(Cp\right)}+1$. We arrive at a contradiction.

Similarly, case 3 also implies a contradiction. Indeed, it follows from $P(s_n\star v_n)=P(u_n)=0$ and Lemma \ref{lem2.4} that there exists $C>0$ large enough such that
\begin{equation*}
     \begin{aligned}
          C&\geq I(u_n)=I(s_n\star u_n)\geq I(s\star v_n)\\
          & \geq\frac{1}{p}e^{sp_c}\min\left\{e^{sp}, 1\right\}{\Vert v_n\Vert}^p_{W^{1,p}(\mathbb{R}^N)}-e^{-Ns}\int_{\mathbb{R}^N}F(e^{\frac{Ns}{2}}v_n)dx\\
          & \geq\frac{1}{p}e^{sp_c}\min\left\{e^{sp}, 1\right\}-e^{-Ns}\int_{\mathbb{R}^N}F(e^{\frac{Ns}{2}}v_n)dx\\
          & =\frac{1}{p}e^{sp_c}\min\left\{e^{sp}, 1\right\}+o_n(1)
     \end{aligned}
\end{equation*}
for all $s\in\mathbb{R}$. However, $\frac{1}{p}e^{sp_c}>C$ for $s=\frac{1}{p_c}\ln{\left(Cp\right)}+1$. We arrive at a contradiction.
 $\hfill\square$
\\ \hspace*{\fill}
\begin{remark}\label{re2.0}
Except for Cases 1,2,3 discussed in the proof of Theorem \ref{lem2.5}, there are two other possible cases satisfying $\lim_{n\to\infty}{\Vert u_n\Vert}_\mathscr{X}=+\infty$ and $\sup_{n\geq1}I(u_n)\leq C$.

\textbf{Case 4}: Only one of ${\Vert u_n\Vert}_{L^p(\mathbb{R}^N)}$ and ${\Vert\nabla u_n\Vert}_{L^p(\mathbb{R}^N)}$ goes to infinity as $n\to\infty$. Moreover, ${\Vert\nabla u_n\Vert}_{L^p(\mathbb{R}^N)}$ is unbounded when $\lim_{n\to\infty}{\Vert u_n\Vert}_{L^p(\mathbb{R}^N)}=+\infty$ and ${\Vert u_n\Vert}_{L^p(\mathbb{R}^N)}$ is unbounded when $\lim_{n\to\infty}{\Vert\nabla u_n\Vert}_{L^p(\mathbb{R}^N)}=+\infty$;

\textbf{Case 5}: Neither ${\Vert u_n\Vert}_{L^p(\mathbb{R}^N)}$ nor ${\Vert\nabla u_n\Vert}_{L^p(\mathbb{R}^N)}$ goes to infinity as $n\to\infty$. Moreover, both ${\Vert u_n\Vert}_{L^p(\mathbb{R}^N)}$ and ${\Vert\nabla u_n\Vert}_{L^p(\mathbb{R}^N)}$ are unbounded.

Without loss of generality, assume $\lim_{n\to\infty}{\Vert\nabla  u_n\Vert}_{L^p(\mathbb{R}^N)}=+\infty$ and ${\Vert u_n\Vert}_{L^p(\mathbb{R}^N)}$ is unbounded in case 4. Obviously, there exists a subsequence $\{u_{n_k}\}_{k=1}^{\infty}$ of $\{u_n\}$ such that $\lim_{k\to\infty}{\Vert u_{n_k}\Vert}_{L^p(\mathbb{R}^N)}=+\infty$. Repeating the proof of case 3, one can easily obtain the desired result.

Here is an example for case 5. For any $k\geq1$, there exists $C_*>0$ such that
\begin{equation*}
     \left\{
     \begin{aligned}
          &\int_{\mathbb{R}^N}{\vert\nabla u_k\vert}^pdx\to+\infty, \int_{\mathbb{R}^N}{\vert u_k\vert}^pdx\leq C_*,
          &\text{ if } k \text{ is a prime},\\
          &\int_{\mathbb{R}^N}{\vert\nabla u_k\vert}^pdx\leq C_*, \int_{\mathbb{R}^N}{\vert u_k\vert}^pdx\to+\infty,
          &\text{ if } k \text{ is not a prime}.
     \end{aligned}
     \right.
\end{equation*}
Nevertheless, we can assume that
\begin{equation*}
     \left\{
     \begin{aligned}
          &s_k:=\frac{1}{p_c+p}\ln{\left(\int_{\mathbb{R}^N}{\vert\nabla u_k\vert}^pdx\right)},
          &\text{ if } k \text{ is a prime},\\
          &s_k:=\frac{1}{p_c}\ln{\left(\int_{\mathbb{R}^N}{\vert u_k\vert}^pdx\right)},
          &\text{ if } k \text{ is not a prime}.
     \end{aligned}
     \right.
\end{equation*}
Similar to the proof of cases 1 and 2, both ${\Vert v_n\Vert}_{L^p(\mathbb{R}^N)}$ and ${\Vert\nabla v_n\Vert}_{L^p(\mathbb{R}^N)}$ are bounded.
\end{remark}

\begin{remark}\label{re2.3}
Assume $N\geq3$ and $f$ satisfies $(f0)-(f4)$. Then for each sequence $\{u_n\}\subset \mathscr{X}\setminus\{0\}$ such that $ P(u_n)=0,~\sup_{n\geq1}{\Vert u_n\Vert}_{L^2(\mathbb{R}^N)}<+\infty$ and $\sup_{n\geq1}I(u_n)<+\infty$,
repeating the proof of Lemma \ref{lem2.5}(iv), we can deduce that $\{u_n\}$ is bounded in $\mathscr{X}$.
\end{remark}

\section{The behavior of the function $m\mapsto E_m$}
When $N\geq3$ and $f$ satisfies $(f0)-(f4)$, it follows from Lemma \ref{lem2.5} that for any given $m>0$, following infimum is well-defined and strictly positive:
\begin{equation*}
    E_m:=\inf\limits_{m\in\mathcal{P}_m}I(u).
\end{equation*}
In this section, we mainly discuss the characteristic behavior of $E_m$ when $m>0$, especially prove that $E_m$ is nonincreasing for $m>0$. Firstly, we have following continuity result of $E_m$.

\begin{lemma}\label{lem3.1}
Assume $N\geq3$ and $f$ satisfies $(f0)-(f4)$, then the function $m\mapsto E_m$ is continuous at each $m>0$.
\end{lemma}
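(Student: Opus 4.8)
The plan is to prove that $m\mapsto E_m$ is both upper and lower semicontinuous at a fixed $m>0$. Since $(f5)$ is unavailable here, neither monotonicity nor any direct comparison of different masses can be invoked, so both bounds must be produced through the fiber map of Lemma~\ref{lem2.4}. Throughout I will use the elementary identity $s\star(\tau u)=\tau\,(s\star u)$ for $\tau>0$, which makes the amplitude rescaling $u\mapsto\tau u$ (the only operation that changes the $L^2$-mass, as $\|\tau u\|_{L^2}^2=\tau^2\|u\|_{L^2}^2$) commute with the mass-preserving action $\star$. I will also repeatedly use that $u\mapsto\int_{\mathbb R^N}F(u)\,dx$ and $u\mapsto\int_{\mathbb R^N}\widetilde F(u)\,dx$ are continuous on bounded subsets of $\mathscr X$ (a consequence of $(f1)$--$(f2)$ and the embeddings into $L^{p_*}\cap L^{p^*}$, exactly as in Lemma~\ref{lem2.1}), so that $I$ is uniformly continuous on bounded sets.

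\emph{Upper semicontinuity.} Fix $\varepsilon>0$ and pick $u\in\mathcal P_m$ with $I(u)\le E_m+\varepsilon$. For $m'$ near $m$ set $\tau=\sqrt{m'/m}$ and $v=\tau u\in S_{m'}$. Letting $s'=s(v)$ from Lemma~\ref{lem2.4}(i), we have $s'\star v\in\mathcal P_{m'}$, hence $E_{m'}\le I(s'\star v)$. As $m'\to m$ we have $\tau\to1$ and $v\to u$ in $\mathscr X$, so the continuity of $u\mapsto s(u)$ (Lemma~\ref{lem2.4}(iii), valid since $u\neq0$) gives $s'\to s(u)=0$; consequently $s'\star v=\tau\,(s'\star u)\to u$ in $\mathscr X$ and $I(s'\star v)\to I(u)$. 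Thus $\limsup_{m'\to m}E_{m'}\le I(u)\le E_m+\varepsilon$, and $\varepsilon\to0$ yields $\limsup_{m'\to m}E_{m'}\le E_m$.

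\emph{Lower semicontinuity.} Take $m_n\to m$ realizing $\liminf_{m'\to m}E_{m'}$ and choose $u_n\in\mathcal P_{m_n}$ with $I(u_n)\le E_{m_n}+1/n$. By the upper semicontinuity just proved, $\sup_nI(u_n)<\infty$, and since $P(u_n)=0$ with $\|u_n\|_{L^2}^2=m_n$ bounded, Remark~\ref{re2.3} gives that $\{u_n\}$ is bounded in $\mathscr X$; moreover Remark~\ref{re2.1} (with $\delta$ chosen uniformly for masses $\le m+1$) forces $\|\nabla u_n\|_{L^p}\ge\delta_0>0$. Put $\tau_n=\sqrt{m/m_n}\to1$, $w_n=\tau_n u_n\in S_m$ and $s_n=s(w_n)$, so $s_n\star w_n\in\mathcal P_m$ and $E_m\le I(s_n\star w_n)$. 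Using $s_n\star w_n=\tau_n\,(s_n\star u_n)$ and that $0=s(u_n)$ maximises $s\mapsto I(s\star u_n)$ (Lemma~\ref{lem2.4}(ii)), the argument reduces to the claim that $\{s_n\}$ is bounded: granting this, $z_n:=s_n\star u_n$ is bounded in $\mathscr X$, whence $\tau_n z_n-z_n\to0$ and, by uniform continuity of $I$ on bounded sets, $I(s_n\star w_n)=I(\tau_n z_n)=I(z_n)+o(1)=I(s_n\star u_n)+o(1)\le I(u_n)+o(1)\le E_{m_n}+o(1)$, giving $E_m\le\liminf_nE_{m_n}$. The lower bound on $\{s_n\}$ is immediate: $s_n\star w_n\in\mathcal P_m$ forces $\|\nabla(s_n\star w_n)\|_{L^p}\ge c>0$ by Lemma~\ref{lem2.5}(ii), and since $\|\nabla(s_n\star w_n)\|_{L^p}^p=e^{s_n(p_c+p)}\|\nabla w_n\|_{L^p}^p$ with $\|\nabla w_n\|_{L^p}$ bounded above, $e^{s_n(p_c+p)}$ is bounded below.

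\emph{Main obstacle -- the upper bound on $s_n$.} This is exactly where the interplay between $p$ and $N$ enters. Suppose, along a subsequence, $s_n\to+\infty$. Writing $P(s_n\star w_n)=0$ as
\[
\frac{p_c+p}{p}\|\nabla w_n\|_{L^p}^p+\frac{p_c}{p}e^{-s_np}\|w_n\|_{L^p}^p=\frac N2\int_{\mathbb R^N}\Lambda(e^{Ns_n/2}w_n)\,|w_n|^{p_*}\,dx,
\]
the left-hand side stays bounded, so it suffices to show the right-hand side diverges. First, $P(w_n)\to P(u_n)=0$ because $w_n-u_n\to0$ in $\mathscr X$ and the nonlinear term is continuous on bounded sets; if $w_n$ vanished locally, Lemma~\ref{Lemma I.1} (with $q=p$, legitimate since $p<p_*<p^*$) would give $w_n\to0$ in $L^{p_*}$, so $\int_{\mathbb R^N}\widetilde F(w_n)\,dx\to0$ by Lemma~\ref{lem2.1}(ii), forcing $P(w_n)\ge\frac{p_c+p}{p}\delta_0^p+o(1)>0$, a contradiction. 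Hence there exist $\beta>0$ and $\{y_n\}$ with $\int_{B(y_n,1)}|w_n|^p\ge\beta$, and the translates $\tilde w_n=w_n(\cdot+y_n)$ converge, up to a subsequence, weakly in $\mathscr X$, strongly in $L^p_{\mathrm{loc}}$ and a.e. to some $\tilde w\neq0$. Since the right-hand side is translation invariant, $\Lambda\ge0$ and $\Lambda(t)\to+\infty$ as $|t|\to\infty$ (both from $(f3)$--$(f4)$ together with Lemma~\ref{lem2.3}, using $p_*>2$), Fatou's lemma on $\{\tilde w\neq0\}$ forces $\int_{\mathbb R^N}\Lambda(e^{Ns_n/2}\tilde w_n)|\tilde w_n|^{p_*}\,dx\to+\infty$, contradicting the boundedness of the left-hand side. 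Thus $\{s_n\}$ is bounded above as well, which completes the lower semicontinuity and, combined with the upper one, gives $\lim_{m'\to m}E_{m'}=E_m$. I expect this upper bound on $s_n$ to be the genuine difficulty: $I(s_n\star w_n)$ cannot be controlled from above a priori, so the coercivity of Lemma~\ref{lem2.5}(iv) does not apply and the non-vanishing argument above is essential.
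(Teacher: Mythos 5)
Your proof is correct, and it shares the paper's skeleton: both semicontinuity inequalities are produced through the fiber map of Lemma~\ref{lem2.4}, the upper one exactly as in the paper (amplitude rescaling $\sqrt{m_k/m}\,u$ plus continuity of $u\mapsto s(u)$), and the lower one reduced to boundedness of the projection parameter, which both you and the paper obtain from boundedness of near-minimizers (Remark~\ref{re2.3}), non-vanishing (Lemma~\ref{Lemma I.1} combined with Remark~\ref{re2.1} and Lemma~\ref{lem2.1}(ii)), and a Fatou blow-up. The genuine difference is in the implementation of the lower bound. The paper matches masses by dilation, $\tilde v_k=v_k(\cdot/t_k)$ with $t_k=(m/m_k)^{1/N}$, and must then estimate the explicit error term $\zeta(k)=\vert I(s(\tilde v_k)\star\tilde v_k)-I(s(\tilde v_k)\star v_k)\vert$ via the commutation rule $s\star(v(\cdot/t))=(s\star v)(\cdot/t)$, which is what forces the $\varphi(k)$, $\psi(k)$ bookkeeping; you match masses by amplitude, $w_n=\sqrt{m/m_n}\,u_n$ (the same operation both proofs use for the upper bound), exploit $s\star(\tau u)=\tau\,(s\star u)$, and absorb the error by uniform continuity of $I$ on bounded sets --- a fact that does follow from $(f0)$--$(f2)$ and Gagliardo--Nirenberg, but which the paper never needs in this form, so in a final write-up you should verify it. For the key bound $\limsup_n s_n<+\infty$, the paper runs Fatou on $\Gamma_0(t)=F(t)/\vert t\vert^{p_*}$ through the positivity of $I(s(\cdot)\star\cdot)$ (the derivation of \eqref{eq2.12}), whereas you run Fatou on $\Lambda(t)=\widetilde{F}(t)/\vert t\vert^{p_*}$ of \eqref{eq2.7} through the Pohozaev identity $P(s_n\star w_n)=0$; both work because Lemma~\ref{lem2.3} and $(f3)$ make the relevant quotient nonnegative and coercive. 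Your route buys a more symmetric treatment of the two directions and dispenses with the $\zeta$, $\varphi$, $\psi$ estimates. Two minor points to tighten: the vanishing/non-vanishing dichotomy must be invoked along the subsequence on which $s_n\to+\infty$ (your contradiction argument does apply verbatim to that subsequence, since $P(w_n)\to0$ and $\Vert\nabla u_n\Vert_{L^p(\mathbb{R}^N)}\geq\delta_0$ persist there, but this should be said); and the lower bound on $s_n$ is superfluous, since for $p\geq2$ both exponents $p_c+p$ and $p_c$ are nonnegative, so an upper bound on $s_n$ alone keeps $z_n=s_n\star u_n$ bounded in $\mathscr{X}$.
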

\begin{proof}
It is enough to prove that for any given $m>0$ and positive sequence $\{m_k\}$ such that $\lim_{k\to\infty}m_k=m$, we have $\lim_{k\to+\infty}E_{m_k}=E_m$.

We first prove that
\begin{equation}\label{eq3.1}
    \limsup\limits_{k\to+\infty}E_{m_k}\leq E_m.
\end{equation}
For any $u\in\mathcal{P}_m$ and $ k\in\mathbb{N}^+$, define
\begin{equation*}
    u_k:=\sqrt{\frac{m_k}{m}}u\in S_{m_k}.
\end{equation*}
Note that $\lim_{k\to+\infty}u_k=u$ in $\mathscr{X}$. It follows from Lemma \ref{lem2.4}(iii) that
$\lim_{k\to+\infty}s(u_k)=s(u)=0$, which leads to
\begin{equation*}
    \lim_{k\to+\infty}s(u_k)\star u_k=s(u)\star u=u\text{ in } \mathscr{X}.
\end{equation*}
Thus, we have
\begin{equation*}
    \limsup\limits_{k\to+\infty}E_{m_k}\leq\limsup\limits_{k\to+\infty}I(s(u_k)\star u_k)=I(u).
\end{equation*}
By the arbitrary of $u$, \eqref{eq3.1} clearly holds.

Next, we prove the following opposite inequality
\begin{equation}\label{eq3.2}
    \liminf\limits_{k\to+\infty}E_{m_k}\geq E_m.
\end{equation}
Set
\begin{equation*}
    t_k:={\left(\frac{m}{m^k}\right)}^{\frac{1}{N}} \quad \text{ and } \quad \tilde{v}_k(\cdot):=v_k\left(\frac{\cdot}{t_k}\right)\in S_m.
\end{equation*}
For each $k\in\mathbb{N}^+$, there exists $v_k\in\mathcal{P}_{m_k}$ such that
\begin{equation}\label{eq3.3}
I(v_k)\leq E_{m_k}+1/k.
\end{equation}
Combining \eqref{eq3.3} with Lemma \ref{lem2.4}(ii), we have
\begin{equation*}
     \begin{aligned}
          E_m &\leq I(s(\tilde{v}_k)\star\tilde{v}_k)\leq I(s(\tilde{v}_k\star v_k)+\left\vert I(s(\tilde{v}_k)\star\tilde{v}_k)-I(s(\tilde{v}_k\star v_k)\right\vert\\
          & \leq I(v_k)+\left\vert I(s(\tilde{v}_k)\star\tilde{v}_k)-I(s(\tilde{v}_k\star v_k)\right\vert\\
          & \leq E_{m_k}+\frac{1}{k}+\left\vert I(s(\tilde{v}_k)\star\tilde{v}_k)-I(s(\tilde{v}_k\star v_k)\right\vert\\
          & =:E_{m_k}+\frac{1}{k}+\zeta(k)
     \end{aligned}
\end{equation*}
Obviously, \eqref{eq3.2} holds of and only if
\begin{equation}\label{eq3.4}
    \lim\limits_{k\to+\infty}\zeta(k)=0.
\end{equation}
Since $s\star\left(v(\cdot/t)\right)=(s\star v)(\cdot/t)$, we have
\begin{equation*}
     \begin{aligned}
          \zeta(k) &=\left\vert\frac{1}{p}\left(t^{N-p}_k-1\right)\int_{\mathbb{R}^N}{\vert\nabla \left(s(\tilde{v}_k)\star v_k\right)\vert}^p+{\vert \left(s(\tilde{v}_k)\star v_k\right)\vert}^pdx-\left(t_k^N-1\right)\int_{\mathbb{R}^N}F\left(s(\tilde{v}_k)\star v_k\right)dx\right\vert\\
          & \leq\frac{1}{p}\left\vert t^{N-p}_k-1\right\vert\int_{\mathbb{R}^N}{\vert\nabla \left(s(\tilde{v}_k)\star v_k\right)\vert}^p+{\vert \left(s(\tilde{v}_k)\star v_k\right)\vert}^pdx+\left\vert t_k^N-1\right\vert\int_{\mathbb{R}^N}\left\vert F\left(s(\tilde{v}_k)\star v_k\right)\right\vert dx\\
          & =:\frac{1}{p}\left\vert t^{N-p}_k-1\right\vert\varphi(k)+\left\vert t_k^N-1\right\vert\psi(k).
     \end{aligned}
\end{equation*}
Since $\lim_{k\to\infty}t_k=1$, the proof of \eqref{eq3.2} is reduced to
\begin{equation}\label{eq3.5}
    \limsup\limits_{k\to+\infty}\varphi(k)<+\infty\text{ and } \limsup\limits_{k\to+\infty}\psi(k)<+\infty.
\end{equation}
In what follows, we divide the proof of \eqref{eq3.5} into three claims.

\textbf{Claim 1.} The sequence $\{v_k\}$ is bounded in $\mathscr{X}$.

Indeed, \eqref{eq3.1} and \eqref{eq3.3} imply that $\limsup_{k\to+\infty}I(v_k)\leq E_m$. Note that $\lim_{k\to+\infty}m_k=m$ and $v_k\in\mathcal{P}_{m_k}$. Hence, it follows from Remark \ref{re2.3} that Claim 1 holds.

\textbf{Claim 2.} The sequence $\{\tilde{v}_k\}$ is bounded in $\mathscr{X}$. Moreover, up to a subsequence, there exists $\{y_k\}\subset\mathbb{R}^N$ and $v\in\mathscr{X}$ such that $\tilde{v}_k(\cdot+y_k)\to v$ a.e. in $\mathbb{R}^N$ as $k\to+\infty$ and $v\neq0$.

Since $\lim_{k\to+\infty}t_k=1$, Claim 1 implies that $\{\tilde{v}_k\}$ is bounded in $\mathscr{X}$. Set
\begin{equation*}
    \rho:=\limsup\limits_{k\to+\infty}\Big(\sup\limits_{y\in\mathbb{R}^N}\int_{B(y,1)}{\vert\tilde{v}_k\vert}^pdx\Big).
\end{equation*}
It only remains to prove that $\rho\neq0$. If $\rho=0$, then it follows from Lemma \ref{Lemma I.1} that $\tilde{v}_k\to0$ in $L^{p_*}(\mathbb{R}^N)$ as $k\to+\infty$, which leads to
\begin{equation*}
    \int_{\mathbb{R}^N}{\vert v_k\vert}^{p_*}dx=\int_{\mathbb{R}^N}{\vert \tilde{v}_k(\cdot\  t_k)\vert}^{p_*}dx=t^{-N}_k\int_{\mathbb{R}^N}{\vert \tilde{v}_k\vert}^{p_*}dx\to0\text{ as }k\to+\infty.
\end{equation*}
Thanks to Lemma \ref{lem2.1}(ii) and $P(v_k)=0$, we have
\begin{equation*}
    \frac{p_c+p}{p}\int_{\mathbb{R}^N}{\vert\nabla v_k\vert}^pdx+\frac{p_c}{p}\int_{\mathbb{R}^N}{\vert v_k\vert}^pdx=\frac{N}{2}\int_{\mathbb{R}^N}\widetilde{F}(v_k)dx\to0\text{ as }k\to+\infty,
\end{equation*}
which implies $\lim_{k\to+\infty}\int_{\mathbb{R}^N}{\vert\nabla v_k\vert}^pdx=0$. In view of Remark \ref{re2.1}, it holds
\begin{equation*}
    0=P(v_k)\geq\frac{p_c+p}{p}\int_{\mathbb{R}^N}{\vert\nabla v_k\vert}^pdx+\frac{p_c}{p}\int_{\mathbb{R}^N}{\vert v_k\vert}^pdx>0\text{ for } k \text{ large enough}.
\end{equation*}
We arrive at a contradiction and hence Claim 2 holds.

\textbf{Claim 3.} $\limsup_{k\to+\infty}s(\tilde{v}_k)<+\infty$.

Suppose there exists a subsequence of $\tilde{v}_k$, still denoted it by $\tilde{v}_k$, such that
\begin{equation}\label{eq3.6}
    s(\tilde{v}_k)\to+\infty \quad \text{ as } k\to+\infty.
\end{equation}
By Claim 2, we have
\begin{equation}\label{eq3.7}
    \tilde{v}_k(\cdot+y_k)\to v\neq0 \quad \text{ a.e. in } \mathbb{R}^N.
\end{equation}
On the other hand, it follows from Lemma \ref{lem2.4}(iv) and \eqref{eq3.6} that
\begin{equation}\label{eq3.8}
    s(\tilde{v}_k(\cdot+y_k))=s(\tilde{v}_k)\to+\infty \text{ as } k\to+\infty.
\end{equation}
Moreover, Lemma \ref{lem2.4}(ii) implies that
\begin{equation}\label{eq3.9}
    I(s(\tilde{v}_k(\cdot+y_k))\star\tilde{v}_k(\cdot+y_k))\geq0.
\end{equation}
Thanks to \eqref{eq3.7}$-$\eqref{eq3.9}, we can get a contradiction in the same way as derivation of \eqref{eq2.12}. Then the Claim 3 is proved.

From Claims 1 and 3, we have $\limsup_{k\to+\infty}{\Vert s(\tilde{v}_k)\star v_k\Vert}_\mathscr{X}<+\infty$. Since $f$ satisfies $(f0)-(f4)$, it is clear that \eqref{eq3.5} holds. We complete the proof.
\end{proof}

\begin{lemma}\label{lem3.2}
Assume $N\geq3$ and $f$ satisfies $(f0)-(f4)$, then the function $m\mapsto E_m$ is nonincreasing on $(0,+\infty)$.
\end{lemma}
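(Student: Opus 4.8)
The plan is to reduce the statement to a one-sided comparison of energy levels and then transport a competitor from one constraint to the other by a pure dilation, reprojecting onto the Pohozaev manifold with the fiber map of Lemma \ref{lem2.4}. Fix $0<m_1<m_2$; by definition of monotonicity it suffices to prove $E_{m_2}\le E_{m_1}$. Given any $u\in\mathcal{P}_{m_1}$, I would set $\tau:=(m_2/m_1)^{1/N}>1$ and define the dilation $v:=u(\cdot/\tau)$, so that ${\Vert v\Vert}^2_{L^2(\mathbb{R}^N)}=\tau^N m_1=m_2$, i.e. $v\in S_{m_2}$. By Lemma \ref{lem2.4}(i),(ii) there is a unique $s(v)\in\mathbb{R}$ with $s(v)\star v\in\mathcal{P}_{m_2}$ and $I(s(v)\star v)=\max_{s\in\mathbb{R}}I(s\star v)$, whence $E_{m_2}\le I(s(v)\star v)$. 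The target is then $I(s(v)\star v)\le I(u)$; passing to the infimum over $u\in\mathcal{P}_{m_1}$ (recall $I(u)=\max_s I(s\star u)$ since $s(u)=0$ on $\mathcal{P}_{m_1}$) would give $E_{m_2}\le E_{m_1}$.

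The first substantive step is to locate the projection point. Using $P(u)=0$ together with the dilation scalings ${\Vert\nabla v\Vert}^p_{L^p}=\tau^{N-p}{\Vert\nabla u\Vert}^p_{L^p}$, ${\Vert v\Vert}^p_{L^p}=\tau^{N}{\Vert u\Vert}^p_{L^p}$ and $\int_{\mathbb{R}^N}\widetilde{F}(v)\,dx=\tau^N\int_{\mathbb{R}^N}\widetilde{F}(u)\,dx$, a direct computation yields $P(v)=\tfrac{p_c+p}{p}\tau^{N-p}(1-\tau^p){\Vert\nabla u\Vert}^p_{L^p(\mathbb{R}^N)}<0$ because $\tau>1$. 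Since $\tfrac{d}{ds}I(s\star v)=P(s\star v)$ and $s\mapsto I(s\star v)$ has its unique maximum at $s(v)$ (Lemma \ref{lem2.4}), the sign $P(0\star v)<0$ forces $s(v)<0$. Moreover $s\star\big(u(\cdot/\tau)\big)=(s\star u)(\cdot/\tau)$, so with $\phi:=s(v)\star u$ we have $s(v)\star v=\phi(\cdot/\tau)$, and the comparison reduces to
\[
I\big(\phi(\cdot/\tau)\big)=\tfrac{1}{p}\tau^{N-p}{\Vert\nabla \phi\Vert}^p_{L^p(\mathbb{R}^N)}+\tfrac{1}{p}\tau^{N}{\Vert \phi\Vert}^p_{L^p(\mathbb{R}^N)}-\tau^N\!\int_{\mathbb{R}^N}F(\phi)\,dx\ \le\ I(u),
\]
where $I(u)\ge I(\phi)$ by the maximality of $s(u)=0$.

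The main obstacle is precisely this energy comparison, where all the difficulty concentrates. The favourable feature is that the gradient term carries the smaller dilation weight $\tau^{N-p}$ while the $L^p$ term and the (strictly positive, by Lemma \ref{lem2.3}) nonlinear term both carry the larger weight $\tau^{N}$, and that $s(v)<0$ shifts mass so as to depress the gradient contribution of $\phi$; the unfavourable feature is the $\tau^N$ amplification of the remaining terms. To make this quantitative I would, for each $s$, match the nonlinear parts of $I(s\star v)$ and $I(s'\star u)$ by choosing $s'=s'(s)$ with $\int_{\mathbb{R}^N}F(s'\star u)\,dx=\tau^N\int_{\mathbb{R}^N}F(s\star u)\,dx$, which is possible because $s\mapsto\int_{\mathbb{R}^N}F(s\star u)\,dx$ is a continuous increasing bijection onto $(0,+\infty)$ (using $F>0$ and $\widetilde{F}>0$ from Lemma \ref{lem2.3}), and then control the leftover polynomial terms through the monotonicity $(f4)$ and the growth estimate $\tfrac{d}{ds}\int_{\mathbb{R}^N}F(s\star u)\,dx>(p_c+p)\int_{\mathbb{R}^N}F(s\star u)\,dx$, which follows from $f(t)t>p_*F(t)$; equivalently one may analyse the maximiser of $s\mapsto I(s\star v)$ directly through the stationarity identity $P(s(v)\star v)=0$. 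This is the delicate point at which the interplay between $p$ and $N$ (as already flagged for Lemmas \ref{lem2.5} and \ref{lem4.6}) genuinely enters, and I expect the balancing of the two dilation exponents to be the hardest part of the argument; once $I(s(v)\star v)\le I(u)$ is secured, taking the infimum over $u\in\mathcal{P}_{m_1}$ closes the proof.
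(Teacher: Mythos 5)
Your reduction ($E_{m_2}\le E_{m_1}$ for $m_1<m_2$), the scaling identities, the computation $P(v)=\tfrac{p_c+p}{p}\tau^{N-p}(1-\tau^p)\Vert\nabla u\Vert^p_{L^p(\mathbb{R}^N)}<0$ and the conclusion $s(v)<0$ are all correct, but the proposal stops exactly where the lemma begins: the comparison $I(s(v)\star v)\le I(u)$ is never proved, and it is not a routine leftover --- as stated it is false in the generality you are working in. The matching device you sketch runs the wrong way. Writing $K(s):=\int_{\mathbb{R}^N}F(s\star u)\,dx$, mass supercriticality gives $K'(s)=\tfrac N2\int e^{-Ns}\widetilde F(e^{Ns/2}u)\,dx\ge (p_c+p)K(s)$ (this is your own estimate, from $f(t)t>p_*F(t)$), so the matched point $s'$ with $K(s')=\tau^NK(s)$ satisfies $s'-s\le\tfrac{N\ln\tau}{p_c+p}$. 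Consequently $e^{p_c s'}\le \tau^{Np_c/(p_c+p)}e^{p_c s}<\tau^N e^{p_c s}$, so the $\Vert u\Vert_{L^p}^p$--terms always leave a strictly positive deficit $\tfrac1p\bigl[\tau^Ne^{p_cs}-e^{p_cs'}\bigr]\Vert u\Vert^p_{L^p(\mathbb{R}^N)}>0$, and when $K$ grows fast (e.g.\ a pure power $|t|^q$ with $q$ near $p^*$) the gradient terms are in deficit as well; there is nothing left to absorb it. The failure is structural, not technical: the fiber map preserves the $L^2$--norm, and for $p=2$ (which the statement and your argument both include) it preserves the $L^p$--norm too, so the dilation's inflation of $\tfrac1p\Vert u\Vert^p_{L^p}$ from $\tfrac{m_1}2$ to $\tfrac{m_2}2$ can never be undone by reprojection; a pure-power computation ($F(t)=|t|^q/q$, $p_*<q<2^*$, $p=2$) gives $E_m=\tfrac m2+c\,m^{-(q-\beta)/(\beta-2)}$ with $\beta=N(q-2)/2$, so $I(s(v)\star v)>I(u)$ actually happens along minimizers for $m_1$ large. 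Even for $p>2$, where the monotonicity does hold, your inequality forces a constraint of the type $N(q-p)\Vert u\Vert^p_{L^p}\le[Np-q(N-p)]\Vert\nabla u\Vert^p_{L^p}$ on the competitor $u$, which nothing guarantees along minimizing sequences in $\mathcal{P}_{m_1}$.

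The paper's mechanism is entirely different and explains what a correct proof must exploit. Given $m_1>m_2$ and a near-minimizer $u\in\mathcal{P}_{m_2}$, the paper first truncates $u$ to compact support ($u_\delta=u\,\chi(\delta\cdot)$, using the continuity of $s(\cdot)$ from Lemma \ref{lem2.4}(iii)), then adds the missing mass as a bump $\sigma\star\tilde v$ whose support is disjoint from that of $u_\delta$, so that $w_\sigma=u_\delta+\sigma\star\tilde v\in S_{m_1}$ and both $I$ and $P$ split additively. As $\sigma\to-\infty$ the bump spreads out and its full contribution vanishes: $\Vert(s(w_\sigma)+\sigma)\star\tilde v\Vert_{W^{1,p}}\to0$ and $I\bigl((s(w_\sigma)+\sigma)\star\tilde v\bigr)\to0$, which crucially uses $e^{sp_c}\to0$, i.e.\ $p_c>0$; the fiber-map maximality of Lemma \ref{lem2.4}(ii) then gives $E_{m_1}\le I(s(w_\sigma)\star u_\delta)+I\bigl((s(w_\sigma)+\sigma)\star\tilde v\bigr)\le I(u)+\varepsilon$. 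The moral is that the extra mass must be added \emph{at asymptotically zero energy cost, far away and at low amplitude}, which is possible precisely because for $p>2$ the $L^2$--constraint does not control $\Vert\cdot\Vert_{W^{1,p}}$ from below; transporting the competitor by a dilation of its own profile, as you propose, multiplies the $L^p$ and nonlinear terms by $\tau^N$ and cannot reproduce this effect.
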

\begin{proof}
We only need to prove that for any $m_1>m_2>0$ and any constant $\varepsilon>0$, it holds
\begin{equation}\label{eq3.10}
    E_{m_1}\leq E_{m_2}+\varepsilon.
\end{equation}
By the definition of $E_{m_2}$, there exists $u\in\mathcal{P}_{m_2}$ such that
\begin{equation}\label{eq3.11}
    I(u)\leq E_{m_2}+\frac{\varepsilon}{2}.
\end{equation}
Let $\chi\in C^\infty_0(\mathbb{R}^N)$ be a radial function such that $0\leq\chi(x)\leq1$ in $\mathbb{R}^N$ and
\begin{equation*}
     \chi(x)=\left\{
     \begin{aligned}
        &1, \quad \text{ if } \vert x\vert\leq1,\\
          &0, \quad \text{ if } \vert x\vert\geq2.
     \end{aligned}
     \right.
\end{equation*}
For any $\delta>0$ small, we define $u_\delta(x)=u(x)\cdot\chi(\delta x)\in\mathscr{X}\backslash\{0\}$. Since $u_\delta\to u$ in $\mathscr{X}$ as $\delta\to0^+$, by Lemma \ref{lem2.4} (iii), we have $\lim_{\delta\to0^+}s(u_\delta)=s(u)=0$, which leads to
\begin{equation*}
    s(u_\delta)\star u_\delta\to s(u)\star u=u\text{ in } \mathscr{X} \text{ as } \delta\to0^+.
\end{equation*}
We can choose $\delta>0$ small enough such that
\begin{equation}\label{eq3.12}
    I(s(u_\delta)\star u_\delta)\leq I(u)+\frac{\varepsilon}{4}.
\end{equation}
Choose $v\in C^\infty_0(\mathbb{R}^N)$ such that spt$(v)\subset B(0,1+4/\delta)\backslash B(0,4/\delta)$ and set
\begin{equation*}
    \tilde{v}=\frac{m_1-{\Vert u_\delta\Vert}^2_{L^2(\mathbb{R}^N)}}{{\Vert v\Vert}^2_{L^2(\mathbb{R}^N)}}v.
\end{equation*}
For any $\sigma\leq0$, we define $w_\sigma:=u_\delta+\sigma\star\tilde{v}$. A direct computation yields that
\begin{equation*}
    \text{spt}(u_\delta)\cap\text{spt}(\sigma\star\tilde{v})=\emptyset,
\end{equation*}
and thus $w_\sigma\in S_m$.

We claim that $s(w_\sigma)$ is bounded from above when $\sigma\to-\infty$. Indeed, it follows from Lemma \ref{lem2.4}(ii) that $I(s(w_\sigma)\star w_\sigma)\geq0$ and $w_\sigma\to u_\delta$ a.e. in $\mathbb{R}^N$ as $\sigma\to-\infty$ with $u_\delta\neq0$. We can infer to a contradiction in the same way to the derivation of \eqref{eq2.12} if the claim is not true.

Since $\lim_{\sigma\to-\infty}(s(w_\sigma)+\sigma)=-\infty$, then as $\sigma\to-\infty$, we have
\begin{equation*}
    {\Vert \left(s(w_\sigma)+\sigma\right)\star\tilde{v}\Vert}_{L^p(\mathbb{R}^N)}\to0, \quad {\Vert\nabla\left[\left(s(w_\sigma)+\sigma\right)\star\tilde{v}\right]\Vert}_{L^p(\mathbb{R}^N)}\to0
\end{equation*}
and
\begin{equation*}
    \left(s(w_\sigma)+\sigma\right)\star\tilde{v}\to0\text{ in } L^{p_*}(\mathbb{R}^N).
\end{equation*}
Using Lemma \ref{lem2.1} (ii), we have
\begin{equation}\label{eq3.13}
    I\left((s(w_\sigma)+\sigma\right)\star\tilde{v})\leq\frac{\varepsilon}{4} \text{ for } \sigma<0 \text{ small enough}.
\end{equation}
Thanks to Lemma \ref{lem2.1}(ii) and \eqref{eq3.11}$-$\eqref{eq3.13}, we further have
\begin{equation*}
     \begin{aligned}
         E_{m_1}&\leq I(s(w_\sigma)\star w_\sigma)=I(s(w_\sigma)\star u_\delta)+I(s(w_\sigma)\star(\sigma\star\tilde{v}))\\
         & \leq I(s(u_\delta)\star u_\delta)+I\left((s(w_\sigma)+\sigma\right)\star\tilde{v})\\
         & \leq I(u)+\frac{\varepsilon}{2}\leq E_{m_2}+\varepsilon.
     \end{aligned}
\end{equation*}
This completes the proof.
\end{proof}

\begin{lemma}\label{lem3.3}
Assume $N\geq3$ and $f$ satisfies $(f0)-(f4)$. If there exist $\mu\in\mathbb{R}$ and $u\in S_m$ such that $I(u)=E_m$ and
\begin{equation*}
    -\Delta_{p}u+{\vert u\vert}^{p-2}u=f(u)-\mu u,
\end{equation*}
then for any $m'>m$ close to $m$ when $\mu>0$ or for any $m'<m$ close to $m$ when $\mu<0$, we have  $E_m>E_{m'}$.
\end{lemma}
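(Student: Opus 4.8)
The plan is to bound $E_{m'}$ from above by the energy of an explicit competitor obtained by rescaling the minimizer $u$, and to read off the sign of the resulting difference through the multiplier $\mu$. For $m'$ near $m$ set $a:=\sqrt{m'/m}$ and consider the scalar rescaling $au$, which satisfies $\|au\|_{L^2(\mathbb{R}^N)}^2=a^2m=m'$, so $au\in S_{m'}$. By Lemma \ref{lem2.4}(i) its fibering projection $s(au)\star(au)$ lies in $\mathcal{P}_{m'}$, whence
\begin{equation*}
     E_{m'}\le h(a):=I\big(s(au)\star(au)\big).
\end{equation*}
Since $u\in\mathcal{P}_m$ forces $s(u)=0$ by the uniqueness in Lemma \ref{lem2.4}(i), we have $h(1)=I(u)=E_m$. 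Thus it suffices to show that $h(a)<h(1)$ for $a>1$ close to $1$ when $\mu>0$, and for $a<1$ close to $1$ when $\mu<0$.

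First I would record the Nehari-type identity coming from the equation. Testing $-\Delta_p u+|u|^{p-2}u=f(u)-\mu u$ against $u$ and integrating by parts gives
\begin{equation*}
     \|\nabla u\|_{L^p(\mathbb{R}^N)}^p+\|u\|_{L^p(\mathbb{R}^N)}^p-\int_{\mathbb{R}^N}f(u)u\,dx=-\mu m.
\end{equation*}
This quantity is exactly what will appear as the leading coefficient of the difference quotient, so the whole proof hinges on relating the variation of $h$ to it.

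The core step is the upper estimate for $h(a)-h(1)$. Writing $\Theta(a,s):=I(s\star(au))$ and $\tau:=s(au)$, continuity of the projection (Lemma \ref{lem2.4}(iii)) gives $\tau\to0$ as $a\to1$. Because $s=0$ maximizes $\Theta(1,\cdot)$ (Lemma \ref{lem2.4}(ii)),
\begin{equation*}
     h(a)=\Theta(1,\tau)+\big[\Theta(a,\tau)-\Theta(1,\tau)\big]\le\Theta(1,0)+\big[\Theta(a,\tau)-\Theta(1,\tau)\big]=h(1)+R(a),
\end{equation*}
where $R(a):=\Theta(a,\tau)-\Theta(1,\tau)$. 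Using the explicit scaling of the $\star$-action and $\tau\to0$, one checks by dominated convergence (under $(f0)$--$(f2)$, cf. Lemma \ref{lem2.1}) that
\begin{equation*}
     \frac{R(a)}{a-1}\longrightarrow\|\nabla u\|_{L^p(\mathbb{R}^N)}^p+\|u\|_{L^p(\mathbb{R}^N)}^p-\int_{\mathbb{R}^N}f(u)u\,dx=-\mu m
\end{equation*}
as $a\to1$ from either side. Hence $R(a)\approx(a-1)(-\mu m)$, which is strictly negative precisely for $a>1$ when $\mu>0$ and for $a<1$ when $\mu<0$; in both cases $h(a)-h(1)\le R(a)<0$, giving $E_{m'}\le h(a)<E_m$. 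Translating through $a=\sqrt{m'/m}$ (so that the sign of $a-1$ matches that of $m'-m$) yields exactly the two regimes claimed.

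The main obstacle is that the fibering projection $a\mapsto s(au)$ is only known to be continuous, not differentiable, so a direct chain rule for $h'(1)$ is not available; the device that circumvents this is the one-line upper bound above, which trades differentiability of $\tau$ for the maximality of $s=0$ on the fibre at $a=1$. A secondary technical point, namely passing to the limit in the nonlinear term $\int_{\mathbb{R}^N}F(e^{N\tau/2}au)\,dx$ as $(a,\tau)\to(1,0)$, is routine given the growth conditions $(f0)$--$(f2)$ and the boundedness supplied by Lemma \ref{lem2.1}.
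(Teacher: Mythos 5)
Your proposal is correct and takes essentially the same route as the paper's proof: both rescale the minimizer to $tu\in S_{m'}$ with $t=\sqrt{m'/m}$, project onto the Pohozaev manifold via the fiber map, use that $s=0$ maximizes $s\mapsto I(s\star u)$ (since $P(u)=0$ forces $s(u)=0$), and detect the sign of the energy variation through $\partial_t I(s\star(tu))$, which at $(t,s)=(1,0)$ equals $I'(u)u=-\mu m$ by testing the equation against $u$. The paper implements your difference-quotient limit for $R(a)$ as a mean value theorem step, using joint continuity to get a uniform negative sign of $\partial_t\Upsilon(t,s)$ on a small rectangle $(1,1+\delta]\times[-\delta,\delta]$, which is the same estimate in a slightly different packaging.
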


\begin{proof}
For any $t>0$ and $s\in\mathbb{R}$, set $u_{t,s}:=s\star(tu)\in S_{mt^2}$. Then
\begin{equation*}
    \Upsilon(t,s):=I(u_{t,s})=\frac{1}{p}t^p(e^s)^{p_c+p}\int_{\mathbb{R}^N}{\vert\nabla u\vert}^pdx+\frac{1}{p}t^pe^{sp_c}\int_{\mathbb{R}^N}{\vert u\vert}^pdx-e^{-Ns}\int_{\mathbb{R}^N}F(te^{\frac{Ns}{2}}u)dx,
\end{equation*}
Some simple calculations yield that
\begin{equation*}
     \begin{aligned}
         \frac{\partial}{\partial t}\Upsilon(t,s) &=t^{p-1}(e^s)^{p_c+p}\int_{\mathbb{R}^N}{\vert\nabla u\vert}^pdx+t^{p-1}e^{sp_c}\int_{\mathbb{R}^N}{\vert u\vert}^pdx-e^{-Ns}\int_{\mathbb{R}^N}f(te^{\frac{Ns}{2}}u)e^{\frac{Ns}{2}}udx\\
         & =\frac{1}{t}I'(u_{t,s})u_{t,s}.
     \end{aligned}
\end{equation*}
For $\mu>0$, we have $\lim_{(t,s)\to(1,0)}u_{t,s}=u$ in $\mathscr{X}$ and
$I'(u)u=-\mu{\Vert u\Vert}^2_{L^2(\mathbb{R}^N)}=-\mu m<0$. Choose $\delta>0$ small enough such that
\begin{equation*}
    \frac{\partial}{\partial t}\Upsilon(t,s)<0 \text{ for any } (t,s)\in(1,1+\delta]\times[-\delta,\delta].
\end{equation*}
Using the mean value theorem, for any $1<\theta<t\leq1+\delta$ and $\vert s\vert\leq\delta$, we have
\begin{equation}\label{eq3.14}
    \Upsilon(t,s)=\Upsilon(1,s)+(t-1)\frac{\partial}{\partial t}\Upsilon(\theta,s)<\Upsilon(1,s).
\end{equation}
Lemma \ref{lem2.4}(iii) implies that $\lim_{t\to1^+}s(tu)=s(u)=0$. Thus, for any $m'>m$ close enough to $m$, we have
\begin{equation*}
t:=\sqrt{\frac{m'}{m}}\in(1,1+\delta]\text{ and }s:=s(tu)\in[-\delta,\delta].
\end{equation*}
The case $\mu<0$ can be proved in a similar way, we omit the details here.
\end{proof}

Following conclusion is a direct result of Lemmas \ref{lem3.2} and \ref{lem3.3}.

\begin{lemma}\label{lem3.4}
Assume $N\geq3$ and $f$ satisfies $(f0)-(f4)$. If there exists $\mu\in\mathbb{R}$ and $u\in S_m$ such that $I(u)=E_m$ and
\begin{equation*}
-\Delta_{p}u+{\vert u\vert}^{p-2}u=f(u)-\mu u,
\end{equation*}
then $\mu\geq0$. Moreover, $E_m>E_{m'}$ for any $m'>m$ when $\mu>0$.
\end{lemma}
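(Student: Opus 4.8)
The plan is to deduce both assertions purely from the monotonicity statement of Lemma \ref{lem3.2} together with the local strict-monotonicity dichotomy of Lemma \ref{lem3.3}, with no further variational analysis: the two preceding lemmas carry all the analytic weight, and what remains is a careful logical combination.

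First I would establish $\mu \geq 0$ by contradiction. Suppose $\mu < 0$. Since $u \in S_m$ achieves $I(u) = E_m$ and solves the Euler--Lagrange equation with this $\mu$, the hypotheses of Lemma \ref{lem3.3} are met, and its $\mu < 0$ branch yields $E_m > E_{m'}$ for every $m' < m$ sufficiently close to $m$. On the other hand, Lemma \ref{lem3.2} asserts that $m \mapsto E_m$ is nonincreasing on $(0,+\infty)$, so for $m' < m$ we must have $E_{m'} \geq E_m$. These two inequalities are incompatible, so $\mu < 0$ is impossible and hence $\mu \geq 0$.

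For the second assertion, assume $\mu > 0$ and fix an arbitrary $m' > m$. Lemma \ref{lem3.3} (the $\mu > 0$ branch) only guarantees the strict inequality $E_m > E_{\hat m}$ for $\hat m > m$ close to $m$, so I would bridge the gap to an arbitrary $m'$ by inserting an intermediate mass and invoking global monotonicity. Choose $\hat m$ with $m < \hat m \le m'$ and $\hat m$ close enough to $m$ for Lemma \ref{lem3.3} to apply (if $m'$ is already close to $m$ one may simply take $\hat m = m'$). Then $E_m > E_{\hat m}$, while Lemma \ref{lem3.2} applied to $\hat m \le m'$ gives $E_{\hat m} \ge E_{m'}$. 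Chaining the two yields $E_m > E_{\hat m} \ge E_{m'}$, so $E_m > E_{m'}$ for every $m' > m$.

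I do not anticipate a genuine obstacle, as the argument is a two-line deduction once the earlier lemmas are in hand; the only point requiring care is the directional asymmetry in Lemma \ref{lem3.3}. The strict decrease occurs on the side $m' > m$ when $\mu > 0$ and on the side $m' < m$ when $\mu < 0$, so one must ensure that the contradiction in the first step uses the correct ($m' < m$) branch and that the chaining in the second step places the intermediate mass $\hat m$ on the correct side of $m$ before applying global monotonicity.
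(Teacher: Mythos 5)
Your proposal is correct and takes essentially the same approach as the paper: the paper gives no written proof, stating only that Lemma \ref{lem3.4} is ``a direct result of Lemmas \ref{lem3.2} and \ref{lem3.3},'' and your argument is precisely that combination made explicit (ruling out $\mu<0$ by playing the $m'<m$ branch of Lemma \ref{lem3.3} against the nonincreasing property, then extending the local strict inequality for $\mu>0$ to all $m'>m$ by chaining it with Lemma \ref{lem3.2}).
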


At the end of this section, we give following limit behavior of $E_m$ when $m\to0^+$.

\begin{lemma}\label{lem3.5}
Assume $N\geq3$ and $f$ satisfies $(f0)-(f4)$. Then $\lim_{m\to0^+}E_m=+\infty$.
\end{lemma}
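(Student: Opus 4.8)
The plan is to argue by contradiction and to exploit the fact that, on the Pohozaev manifold, a vanishing $L^2$-mass is incompatible with the constraint $P(u)=0$. Lemma \ref{lem3.2} already guarantees that $m\mapsto E_m$ is nonincreasing, so $\lim_{m\to0^+}E_m$ exists in $(0,+\infty]$; if it were not $+\infty$ there would be a sequence $m_n\to0^+$ and a constant $C>0$ with $E_{m_n}\le C$ for all $n$. Using Lemma \ref{lem2.5}(i) to ensure $\mathcal{P}_{m_n}\neq\emptyset$, I would then select $u_n\in\mathcal{P}_{m_n}$ with $I(u_n)\le E_{m_n}+\tfrac1n\le C+1$, and the goal becomes showing that such a sequence cannot exist.

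First I would show that $\{u_n\}$ is bounded in $\mathscr{X}$. This is immediate from Remark \ref{re2.3}, since $P(u_n)=0$, $\sup_n\|u_n\|_{L^2(\mathbb{R}^N)}^2=\sup_n m_n<+\infty$, and $\sup_n I(u_n)\le C+1<+\infty$. In particular $\|\nabla u_n\|_{L^p(\mathbb{R}^N)}$ is bounded, so the Sobolev embedding $W^{1,p}(\mathbb{R}^N)\hookrightarrow L^{p^*}(\mathbb{R}^N)$ gives a uniform bound on $\|u_n\|_{L^{p^*}(\mathbb{R}^N)}$. Since $\|u_n\|_{L^2(\mathbb{R}^N)}^2=m_n\to0$ and $2<p_*<p^*$, the interpolation inequality
$$\|u_n\|_{L^{p_*}(\mathbb{R}^N)}\le\|u_n\|_{L^2(\mathbb{R}^N)}^{1-\lambda}\,\|u_n\|_{L^{p^*}(\mathbb{R}^N)}^{\lambda},\qquad \frac{1}{p_*}=\frac{1-\lambda}{2}+\frac{\lambda}{p^*},$$
forces $\|u_n\|_{L^{p_*}(\mathbb{R}^N)}\to0$. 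Lemma \ref{lem2.1}(ii) then yields $\int_{\mathbb{R}^N}\widetilde{F}(u_n)\,dx\to0$.

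Next I would feed this into the Pohozaev identity. Because $P(u_n)=0$,
$$\frac{p_c+p}{p}\int_{\mathbb{R}^N}|\nabla u_n|^p\,dx+\frac{p_c}{p}\int_{\mathbb{R}^N}|u_n|^p\,dx=\frac{N}{2}\int_{\mathbb{R}^N}\widetilde{F}(u_n)\,dx\to0,$$
and since both terms on the left are nonnegative this forces $\|\nabla u_n\|_{L^p(\mathbb{R}^N)}\to0$. To land in the regime of Remark \ref{re2.1} I would fix $\bar m:=\sup_n m_n<+\infty$, so that $u_n\in B_{m_n}\subset B_{\bar m}$ for every $n$; then for $n$ large enough $\|\nabla u_n\|_{L^p(\mathbb{R}^N)}\le\delta(N,p,\bar m)$, and Remark \ref{re2.1} gives
$$0=P(u_n)\ge\frac{N(p-2)+2(p-1)}{2p}\int_{\mathbb{R}^N}|\nabla u_n|^p\,dx.$$
As $u_n\in L^2(\mathbb{R}^N)$ with $\|u_n\|_{L^2(\mathbb{R}^N)}^2=m_n>0$, the function $u_n$ cannot be constant, so $\int_{\mathbb{R}^N}|\nabla u_n|^p\,dx>0$ and the right-hand side is strictly positive, a contradiction. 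Hence no such sequence exists and $\lim_{m\to0^+}E_m=+\infty$.

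The argument is essentially routine once the preliminary results are available; the conceptual heart is simply that the Pohozaev constraint cannot be met by a nonzero function with arbitrarily small gradient, so driving both the mass and the $L^{p_*}$-norm to zero is self-defeating. The only points requiring genuine care are the uniform-in-$m$ use of the threshold $\delta$ in Remark \ref{re2.1}, which I resolve by working inside the single fixed ball $B_{\bar m}$, and the verification that $2<p_*<p^*$, which holds precisely when $N\ge3$ (as recorded after Theorem \ref{theo1.1}) and which legitimizes the interpolation step producing $\|u_n\|_{L^{p_*}(\mathbb{R}^N)}\to0$.
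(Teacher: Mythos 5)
Your proof is correct, but it takes a genuinely different route from the paper's. The paper argues directly: for any sequence with $P(u_n)=0$ and ${\Vert u_n\Vert}_{L^2(\mathbb{R}^N)}\to0$, it rescales $v_n:=(-s_n)\star u_n$ so that $1\le{\Vert v_n\Vert}^p_{W^{1,p}(\mathbb{R}^N)}\le2$, invokes the Lions lemma (Lemma \ref{Lemma I.1}) to get $v_n\to0$ in $L^{p_*}(\mathbb{R}^N)$, and then exploits the fiber-map maximality of Lemma \ref{lem2.4}, namely $I(u_n)=I(s_n\star v_n)\ge I(s\star v_n)\ge\frac{1}{p}e^{sp_c}+o_n(1)$ for every $s$, letting $s\to+\infty$ to force $I(u_n)\to+\infty$. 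You instead argue by contradiction: the assumed energy bound lets you invoke Remark \ref{re2.3} to get boundedness in $\mathscr{X}$, after which the vanishing of the \emph{global} mass (not merely of local masses) lets you replace concentration-compactness by plain Sobolev embedding plus interpolation to conclude ${\Vert u_n\Vert}_{L^{p_*}(\mathbb{R}^N)}\to0$; then $P(u_n)=0$ combined with Lemma \ref{lem2.1}(ii) forces ${\Vert\nabla u_n\Vert}_{L^p(\mathbb{R}^N)}\to0$, and Remark \ref{re2.1}, applied with the single threshold $\delta(N,p,\bar{m})$ on the fixed ball $B_{\bar{m}}$ (a point you handle correctly), gives the contradiction, since a nontrivial $L^2$ function cannot have vanishing gradient. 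Comparing the two: your argument is more elementary at this stage (no fiber map, no direct use of Lions' lemma, though Remark \ref{re2.3} itself rests on the machinery of Lemma \ref{lem2.5}(iv)), and it is insensitive to the value of $p_c=N(p-2)/2$, so it works verbatim for $p=2$, where the paper's closing estimate $\frac{1}{p}e^{sp_c}\to+\infty$ degenerates and the definition of $s_n$ involves $1/p_c$; the paper's approach, on the other hand, is direct rather than by contradiction, needs no a priori energy bound on the sequence, and displays the blow-up quantitatively through the scaling parameter $s$.
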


\begin{proof}
 It is enough to show that for any sequence $\{u_n\}\subset\mathscr{X}\backslash\{0\}$ such that $ P(u_n)=0$ and $\lim_{n\to+\infty}{\Vert u_n\Vert}_{L^2(\mathbb{R}^N)}=0$,
we have $\lim_{n\to+\infty}I(u_n)=\infty$. Thanks to Lemma \ref{lem2.5}(iv), we can define
\begin{equation*}
s_n:=\min\left\{\frac{1}{p_c+p}\ln{\left(\int_{\mathbb{R}^N}{\vert\nabla u_n\vert}^pdx\right)}, \frac{1}{p_c}\ln{\left(\int_{\mathbb{R}^N}{\vert u_n\vert}^pdx\right)} \right\}\text{ and }v_n:=(-s_n)\star u_n.
\end{equation*}
Some direct computations yield that
$$1\leq{\Vert v_n\Vert}^p_{W^{1,p}(\mathbb{R}^N)}\leq2\text{ and }\lim\limits_{n\to\infty}{\Vert v_n\Vert}_{L^2(\mathbb{R}^N)}=\lim\limits_{n\to\infty}{\Vert u_n\Vert}_{L^2(\mathbb{R}^N)}=0.$$
It follows from Lemma \ref{Lemma I.1} and Lemma \ref{lem2.1}(ii)that $\lim_{n\to\infty}v_n=0$ in $L^{p_*}(\mathbb{R}^N)$ and
\begin{equation*}
\lim\limits_{n\to+\infty}e^{-Ns}\int_{\mathbb{R}^N}F(e^{\frac{Ns}{2}}v_n)dx=0\text{ for any } s\in\mathbb{R}.
\end{equation*}
Thanks to Lemma \ref{lem2.4} and the fact that $P(s_n\star v_n)=P(u_n)=0$, we have
\begin{equation*}
     \begin{aligned}
         I(u_n)&=I(s_n\star v_n)\geq I(s\star v_n)\\
         & =\frac{1}{p}(e^s)^{p_c+p}\int_{\mathbb{R}^N}{\vert\nabla v_n\vert}^pdx+\frac{1}{p}e^{sp_c}\int_{\mathbb{R}^N}{\vert v_n\vert}^pdx-e^{-Ns}\int_{\mathbb{R}^N}F(e^{\frac{Ns}{2}}v_n)dx\\
         & \geq\frac{1}{p}e^{sp_c}+o_n(1).
     \end{aligned}
\end{equation*}
By the arbitrariness of $s\in\mathbb{R}$, we have $\lim_{n\to\infty}I(u_n)=+\infty$. This complete the proof.
\end{proof}
\section{Ground states}
This section is devoted to the proof of Theorems \ref{theo1.1} and \ref{theo1.2} by establishing the existence of ground states to \eqref{(P_m)} and completing the properties of the ground state energy function $E_m$. We first focus on the proof of Theorem \ref{theo1.1} and give the following lemmas.

\begin{lemma}\label{lem4.1}
For the constrained functional $I|_{S_m}$ at the level $E_m$, there exists a Palais-Smale sequence $\{u_n\}\subset\mathcal{P}_m$. Moreover, if $f$ is odd, we have in addition that ${\Vert u^-_n\Vert}_{L^2(\mathbb{R}^N)}\to0$, where $u_n^-$ represent the negative part of $u_n$.
\end{lemma}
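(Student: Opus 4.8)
The plan is to realize $E_m$ as a genuine minimization over $S_m$ through the fibering map of Lemma~\ref{lem2.4}, and then to extract the constrained Palais--Smale sequence by Ekeland's variational principle. First I would record the minimax reformulation. For $u\in S_m$ the scaling preserves the $L^2$-norm, so $s\star u\in S_m$ for every $s$, and by Lemma~\ref{lem2.4} the map $s\mapsto I(s\star u)$ attains its unique maximum at $s=s(u)$, where moreover $s(u)\star u\in\mathcal{P}_m$. Since $u\mapsto s(u)\star u$ maps $S_m$ onto $\mathcal{P}_m$ and is the identity on $\mathcal{P}_m$, the auxiliary functional
\begin{equation*}
\Psi(u):=I\bigl(s(u)\star u\bigr)=\max_{s\in\mathbb{R}}I(s\star u),\qquad u\in S_m,
\end{equation*}
satisfies $\inf_{u\in S_m}\Psi(u)=\inf_{v\in\mathcal{P}_m}I(v)=E_m$. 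By Lemma~\ref{lem2.4}(iii) the map $s(\cdot)$ is continuous, hence $\Psi$ is continuous on the complete metric space $(S_m,\|\cdot\|_{\mathscr X})$, and $\Psi\geq E_m>0$ by Lemma~\ref{lem2.5}(iii).

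Next I would apply Ekeland's variational principle to $\Psi$ on $S_m$: starting from a minimizing sequence one obtains $\{w_n\}\subset S_m$ with $\Psi(w_n)\to E_m$ and
\begin{equation*}
\Psi(w)\geq\Psi(w_n)-\tfrac{1}{n}\|w-w_n\|_{\mathscr X}\quad\text{for all }w\in S_m.
\end{equation*}
Setting $u_n:=s(w_n)\star w_n$ gives $u_n\in\mathcal{P}_m$, so $P(u_n)=0$ holds \emph{exactly}, and $I(u_n)=\Psi(w_n)\to E_m$. By the coercivity of $I$ on $\mathcal{P}_m$ (Lemma~\ref{lem2.5}(iv)) the sequence $\{u_n\}$ is bounded in $\mathscr X$, and combining this with $\inf_{\mathcal{P}_m}\|\nabla u\|_{L^p(\mathbb{R}^N)}>0$ (Lemma~\ref{lem2.5}(ii)) one checks, arguing as in the proof of Lemma~\ref{lem2.5}(iv), that $\{s(w_n)\}$ is bounded.

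The heart of the argument is to convert the Ekeland slope into the condition $\|I'|_{S_m}(u_n)\|\to 0$ without differentiating $\Psi$, since $s(\cdot)$ is only known to be continuous; I expect this transfer step to be the main obstacle. Here I would use that $s\star\cdot$ is a linear $L^2$-isometry satisfying $s_1\star(s_2\star u)=(s_1+s_2)\star u$, so that for any $\phi\in T_{u_n}S_m$ the pulled-back direction $\phi_n:=(-s(w_n))\star\phi$ lies in $T_{w_n}S_m$. Testing the Ekeland inequality along the curve obtained by normalizing $t\mapsto w_n+t\phi_n$ back onto $S_m$, and using $\Psi(w)\geq I(s(w_n)\star w)$ together with the continuity of $s(\cdot)$ and $I\in C^1$, the relevant difference quotients converge to $I'(u_n)\phi$ as $t\to 0^+$; the slope $1/n$ and the boundedness of $\{s(w_n)\}$ then give $|I'(u_n)\phi|\leq\frac{C}{n}\|\phi\|_{\mathscr X}$, and replacing $\phi$ by $-\phi$ yields $\|I'|_{S_m}(u_n)\|\to0$. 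Thus $\{u_n\}\subset\mathcal{P}_m$ is a Palais--Smale sequence for $I|_{S_m}$ at level $E_m$.

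Finally, for odd $f$ I would arrange the minimizing sequence to be nonnegative. Since $f$ odd makes $F$ even, one has $\|\nabla|u|\|_{L^p(\mathbb{R}^N)}=\|\nabla u\|_{L^p(\mathbb{R}^N)}$, $\||u|\|_{L^p(\mathbb{R}^N)}=\|u\|_{L^p(\mathbb{R}^N)}$ and $F(|u|)=F(u)$, whence $I(|u|)=I(u)$ and $\Psi(|u|)=\Psi(u)$; so the minimizing sequence $\{v_n\}$ fed into Ekeland may be taken with $v_n\geq0$. Ekeland's principle then produces $w_n$ with $\|w_n-v_n\|_{\mathscr X}\to0$, and since $u\mapsto u^-$ is $1$-Lipschitz in $L^2$ and $v_n^-=0$,
\begin{equation*}
\|w_n^-\|_{L^2(\mathbb{R}^N)}=\|w_n^--v_n^-\|_{L^2(\mathbb{R}^N)}\leq\|w_n-v_n\|_{L^2(\mathbb{R}^N)}\to0.
\end{equation*}
Because $s\star\cdot$ commutes with taking the negative part and preserves the $L^2$-norm, the identity $u_n^-=s(w_n)\star(w_n^-)$ gives $\|u_n^-\|_{L^2(\mathbb{R}^N)}=\|w_n^-\|_{L^2(\mathbb{R}^N)}\to0$, which is the additional claim.
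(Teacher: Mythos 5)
Your overall scheme coincides with the paper's: introduce the max-functional $\Psi(u)=I(s(u)\star u)$, observe that $\inf_{S_m}\Psi=E_m$, produce an almost-critical sequence $\{w_n\}\subset S_m$ for $\Psi$ (the paper invokes Ghoussoub's minimax principle for the homotopy-stable family of singletons, which at this level is essentially your Ekeland argument), push it onto $\mathcal{P}_m$ via $u_n:=s(w_n)\star w_n$, and, for odd $f$, combine a nonnegative minimizing sequence with the $L^2$-contraction of $u\mapsto u^-$ and the fact that $s\star\cdot$ preserves signs and the $L^2$-norm; this last part of your argument is correct and is exactly the paper's. The genuine problem lies in the step you yourself identify as the heart: converting the Ekeland slope into $\Vert dI|_{S_m}(u_n)\Vert_{u_n,*}\to0$. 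You justify it with the inequality $\Psi(w)\geq I(s(w_n)\star w)$, evaluated along the normalized curve $\tilde w_{n,t}\in S_m$ through $w_n+t\phi_n$. But this is a \emph{lower} bound for $\Psi(\tilde w_{n,t})$, and Ekeland's inequality $\Psi(\tilde w_{n,t})\geq\Psi(w_n)-\frac{1}{n}\Vert\tilde w_{n,t}-w_n\Vert_{\mathscr{X}}$ is \emph{also} a lower bound on the same quantity. Two lower bounds cannot be chained: they are perfectly consistent with $I(u_n+t\phi)=I(s(w_n)\star\tilde w_{n,t})+o(t)$ dropping at an arbitrary rate, because the maximum defining $\Psi(\tilde w_{n,t})$ may be attained at a scaling different from $s(w_n)$. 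So, from the facts you invoke, the difference quotients need not converge to $I'(u_n)\phi$, and the bound $\vert I'(u_n)\phi\vert\leq\frac{C}{n}\Vert\phi\Vert_{\mathscr{X}}$ does not follow.

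What is missing is the opposite ("hard") half of the envelope argument: the maximality of $\Psi$ at the \emph{base} point, $\Psi(w_n)=\max_{s}I(s\star w_n)\geq I\bigl(s(\tilde w_{n,t})\star w_n\bigr)$. This gives the upper bound
\begin{equation*}
\Psi(\tilde w_{n,t})-\Psi(w_n)\leq I\bigl(s(\tilde w_{n,t})\star\tilde w_{n,t}\bigr)-I\bigl(s(\tilde w_{n,t})\star w_n\bigr)=t\,dI(u_n)[\phi]+o(t),
\end{equation*}
where the last equality uses the mean value theorem, the continuity $s(\tilde w_{n,t})\to s(w_n)$ from Lemma \ref{lem2.4}(iii), $I\in C^1$, and the boundedness of $\{s(w_n)\}$ (which itself requires taking the minimizing sequence inside $\mathcal{P}_m$, so that Lemma \ref{lem2.5}(ii),(iv) control $w_n$). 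Chaining this upper bound with Ekeland's lower bound yields $dI(u_n)[\phi]\geq-\frac{C}{n}\Vert\phi\Vert_{\mathscr{X}}$, and replacing $\phi$ by $-\phi$ finishes. This two-sided comparison is precisely the content of the paper's Lemma \ref{lem4.2}, which proves that $\Phi$ is $C^1$ with $d\Phi(u)[\varphi]=dI(s(u)\star u)[s(u)\star\varphi]$; it is the reason the paper establishes differentiability instead of working with a merely continuous max-functional. Once you restore this inequality your proof closes, but as written the key step fails.
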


In order to prove Lemma \ref{lem4.1}, we refer to some arguments from \cite{JeanjeanCVPDE2020,BartschJFA2018,BartschCVPDE2019}. Firstly, we introduce the following functional $\Phi:\mathscr{X}\backslash\{0\}\to\mathbb{R}$ defined by
\begin{equation*}
     \begin{aligned}
          \Phi(u) &:=I(s(u)\star u)\\
          & =\frac{1}{p}\left(e^{s(u)}\right)^{p_c+p}\int_{\mathbb{R}^N}{\vert\nabla u\vert}^pdx+\frac{1}{p}\left(e^{s(u)}\right)^{p_c}\int_{\mathbb{R}^N}{\vert u\vert}^pdx-e^{-Ns(u)}\int_{\mathbb{R}^N}F(e^{\frac{Ns(u)}{2}}u)dx
     \end{aligned}
\end{equation*}
where the unique number $s(u)\in\mathbb{R}$ is guaranteed by Lemma \ref{lem2.4}.

\begin{lemma}\label{lem4.2}
The functional $\Phi:\mathscr{X}\backslash\{0\}\to\mathbb{R}$ is $C^1$ and
\begin{equation*}
     \begin{aligned}
          d\Phi(u)[\varphi] &=\left(e^{s(u)}\right)^{p_c+p}\int_{\mathbb{R}^N}{\vert\nabla u\vert}^{p-2}\nabla u\nabla\varphi dx+\left(e^{s(u)}\right)^{p_c}\int_{\mathbb{R}^N}{\vert u\vert}^{p-2}u\varphi dx\\
          & -e^{-Ns(u)}\int_{\mathbb{R}^N}f(e^{\frac{Ns(u)}{2}}u)e^{\frac{Ns(u)}{2}}\varphi dx\\
          & =dI(s(u)\star u)[s(u)\star\varphi]
     \end{aligned}
\end{equation*}
for any $u\in\mathscr{X}\backslash\{0\}$ and $\varphi\in\mathscr{X}$.
\end{lemma}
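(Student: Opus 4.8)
The plan is to exploit that, by Lemma \ref{lem2.4}(ii), the number $s(u)$ realizes the global maximum of the one-dimensional map $s\mapsto I(s\star u)$, while by Lemma \ref{lem2.4}(iii) the assignment $u\mapsto s(u)$ is only known to be continuous. Since $f$ is merely continuous, I do \emph{not} want to differentiate $s(\cdot)$; instead I will establish G\^{a}teaux differentiability of $\Phi$ through a squeeze argument anchored on the maximality of $s(u)$, and then upgrade to $C^1$ by continuity of the derivative.

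First I record the ingredients. The functional $I$ is of class $C^1$ on $\mathscr{X}$: the term $\frac1p\int_{\mathbb{R}^N}(|\nabla u|^p+|u|^p)\,dx$ is $C^1$ on $W^{1,p}(\mathbb{R}^N)$, while $(f1)$--$(f2)$ together with the continuous embeddings of $\mathscr{X}$ into $L^{p_*}(\mathbb{R}^N)$ and $L^{p^*}(\mathbb{R}^N)$ render the Nemytskii functional $u\mapsto\int_{\mathbb{R}^N}F(u)\,dx$ of class $C^1$. I also use the linearity $s\star(u+t\varphi)=s\star u+t\,(s\star\varphi)$, immediate from the definition of $\star$.

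Now fix $u\in\mathscr{X}\setminus\{0\}$ and $\varphi\in\mathscr{X}$; for $|t|$ small, $u+t\varphi\ne0$. Set $s_0:=s(u)$ and $s_t:=s(u+t\varphi)$, so that $s_t\to s_0$ as $t\to0$. The maximality in Lemma \ref{lem2.4}(ii) yields $I(s_0\star u)\ge I(s_t\star u)$ and $I\big(s_t\star(u+t\varphi)\big)\ge I\big(s_0\star(u+t\varphi)\big)$, and subtracting these sandwiches the increment of $\Phi$:
\begin{equation*}
    I\big(s_0\star(u+t\varphi)\big)-I(s_0\star u)\ \le\ \Phi(u+t\varphi)-\Phi(u)\ \le\ I\big(s_t\star(u+t\varphi)\big)-I(s_t\star u).
\end{equation*}
Writing $\xi\star(u+t\varphi)=\xi\star u+t\,(\xi\star\varphi)$ and applying the mean value theorem to $r\mapsto I(\xi\star u+r\,\xi\star\varphi)$, each outer difference equals $t\,dI\big(\xi\star u+\tau t\,\xi\star\varphi\big)[\xi\star\varphi]$ for some $\tau\in(0,1)$, with $\xi=s_0$ on the left and $\xi=s_t$ on the right. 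Dividing by $t$ and letting $t\to0$ (the inequalities reverse consistently for $t<0$), and using $s_t\to s_0$ together with the continuity of $dI$ and of $\star$, both bounds converge to $dI(s_0\star u)[s_0\star\varphi]$. Hence $\Phi$ is G\^{a}teaux differentiable with $d\Phi(u)[\varphi]=dI(s(u)\star u)[s(u)\star\varphi]$.

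The explicit formula then follows by substituting the scaling identities \eqref{bian_huan} into $dI(s(u)\star u)[s(u)\star\varphi]$ and changing variables $y=e^{s(u)}x$. Finally, since $u\mapsto s(u)$ is continuous, so are $u\mapsto s(u)\star u$ and $u\mapsto(\varphi\mapsto s(u)\star\varphi)$, and as $dI$ is continuous the map $u\mapsto d\Phi(u)\in\mathscr{X}^*$ is continuous; a continuous G\^{a}teaux derivative is automatically Fr\'{e}chet, so $\Phi\in C^1$. The main obstacle is exactly the $s$-increment $s_t-s_0$, whose differentiability in $t$ is unavailable because $f$ is only continuous; the squeeze argument built on $s(u)$ being a maximizer is precisely what removes the need to differentiate $s(\cdot)$.
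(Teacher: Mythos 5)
Your proof is correct and takes essentially the same route as the paper's: the same squeeze of $\Phi(u+t\varphi)-\Phi(u)$ between $I(s_0\star(u+t\varphi))-I(s_0\star u)$ and $I(s_t\star(u+t\varphi))-I(s_t\star u)$ using the maximality in Lemma \ref{lem2.4}(ii), the mean value theorem, the continuity of $u\mapsto s(u)$ from Lemma \ref{lem2.4}(iii) to pass to the limit, and the standard fact that a continuous G\^{a}teaux derivative gives $C^1$. The only cosmetic difference is that the paper applies the mean value theorem termwise to the explicit integrands (producing the intermediate points $\eta_i,\xi_i$), whereas you apply it once to $r\mapsto I(\xi\star u+r\,\xi\star\varphi)$ via $I\in C^1(\mathscr{X})$; this changes nothing essential.
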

\begin{proof}
A direct computation shows that
\begin{equation*}
\frac{d}{dt}{\vert\nabla u+t\nabla\varphi\vert}^p=p{\vert\nabla u+t\nabla\varphi\vert}^{p-2}\left(\nabla u\cdot\nabla\varphi+t{\vert\nabla\varphi\vert}^2\right),
\end{equation*}
\begin{equation*}
\frac{d}{dt}{\vert u+t\varphi\vert}^p=p{\vert u+t\varphi\vert}^{p-2}\left(u\varphi+t{\vert\varphi\vert}^2\right).
\end{equation*}
Let $\varphi\in\mathscr{X}, u\in\mathscr{X}\backslash\{0\}$ and define $s_t:=s(u+t\varphi)$. From the definition of $\Phi$, we have
\begin{equation*}
\Phi(u+t\varphi)-\Phi(u)=I(s_t\star(u+t\varphi))-I(s_0\star u)
\end{equation*}
where $\vert t\vert$ is small enough. Since $s_0=s(u)$ is the unique maximum point of the function $I(s\star u)$, by the mean value theorem, we have
\begin{equation*}
     \begin{aligned}
          I(s_t\star(u+t\varphi))-I(s_0\star u) &\leq I(s_t\star(u+t\varphi))-I(s_t\star u)\\
          & =\frac{1}{p}\left(e^{s_t}\right)^{p_c+p}\int_{\mathbb{R}^N}\left[{\vert\nabla u+t\nabla\varphi\vert}^p-{\vert\nabla u\vert}^p\right]dx\\
          & +\frac{1}{p}\left(e^{s_t}\right)^{p_c}\int_{\mathbb{R}^N}\left[{\vert u+t\varphi\vert}^p-{\vert u\vert}^p\right]dx\\
          & -e^{-Ns_t}\int_{\mathbb{R}^N}\left[F(e^{\frac{Ns_t}{2}}(u+t\varphi))-F(e^{\frac{Ns_t}{2}}u)\right]dx\\
          & =\left(e^{s_t}\right)^{p_c+p}\int_{\mathbb{R}^N}{\vert\nabla u+\eta_1t\nabla\varphi\vert}^{p-2}\left(\nabla u\cdot\nabla\varphi+\eta_1t{\vert\nabla\varphi\vert}^2\right)tdx\\
          & +\left(e^{s_t}\right)^{p_c}\int_{\mathbb{R}^N}{\vert u+\eta_2t\varphi\vert}^{p-2}\left(u\varphi+\eta_2t{\vert\varphi\vert}^2\right)tdx\\
          & -e^{-Ns_t}\int_{\mathbb{R}^N}f(e^{\frac{Ns_t}{2}}(u+\eta_3t\varphi))e^{\frac{Ns_t}{2}}t\varphi dx
     \end{aligned}
\end{equation*}
where $\eta_1, \eta_2, \eta_3\in(0,1)$. Similarly, we have
\begin{equation*}
     \begin{aligned}
          I(s_t\star(u+t\varphi))-I(s_0\star u) &\geq I(s_0\star(u+t\varphi))-I(s_0\star u)\\
          & =\left(e^{s_0}\right)^{p_c+p}\int_{\mathbb{R}^N}{\vert\nabla u+\xi_1t\nabla\varphi\vert}^{p-2}\left(\nabla u\cdot\nabla\varphi+\xi_1t{\vert\nabla\varphi\vert}^2\right)tdx\\
          & +\left(e^{s_0}\right)^{p_c}\int_{\mathbb{R}^N}{\vert u+\xi_2t\varphi\vert}^{p-2}\left(u\varphi+\xi_2t{\vert\varphi\vert}^2\right)tdx\\
          & -e^{-Ns_0}\int_{\mathbb{R}^N}f(e^{\frac{Ns_0}{2}}(u+\xi_3t\varphi))e^{\frac{Ns_t}{2}}t\varphi dx
     \end{aligned}
\end{equation*}
where $\xi_1, \xi_2, \xi_3\in(0,1)$. It follows from Lemma \ref{lem2.4}(iii) that $\lim_{t\to0}s_t=s_0=s(u)$. Then using above of two inequalities, we can obtain
\begin{equation*}
     \begin{aligned}
          \lim\limits_{t\to0}\frac{\Phi(u+t\varphi)-\Phi(u)}{t} &=\left(e^{s(u)}\right)^{p_c+p}\int_{\mathbb{R}^N}{\vert\nabla u\vert}^{p-2}\nabla u\cdot\nabla\varphi dx+\left(e^{s(u)}\right)^{p_c}\int_{\mathbb{R}^N}{\vert u\vert}^{p-2}u\varphi dx\\
          & -e^{-Ns(u)}\int_{\mathbb{R}^N}f(e^{\frac{Ns(u)}{2}}u)e^{\frac{Ns(u)}{2}}\varphi dx.
     \end{aligned}
\end{equation*}
Using Lemma \ref{lem2.4}(iii) again, the G\^ateaux derivative of $\Phi$ is continuous in $u$ and linearly bounded in $\varphi$. Hence $\Phi$ is a $C^1$ function \cite{Willem1997,Badiale2010} and by changing variables in the integrals, we have
\begin{equation*}
     \begin{aligned}
          d\Phi(u)[\varphi] &=\int_{\mathbb{R}^N}{\vert\nabla(s(u))\star u\vert}^{p-2}\nabla(s(u)\star u)\cdot\nabla(s(u)\star\varphi) dx\\
          & +\int_{\mathbb{R}^N}{\vert s(u)\star u\vert}^{p-2}(s(u)\star u)(s(u)\star \varphi)dx-\int_{\mathbb{R}^N}f(s(u)\star u)(s(u)\star\varphi)dx\\
          & =dI(s(u)\star u)[s(u)\star\varphi].
     \end{aligned}
\end{equation*}
This completes the proof.
\end{proof}

For any given $m>0$, we consider the following constrained functional
\begin{equation*}
J:=\Phi|_{S_m}:S_m\to\mathbb{R}.
\end{equation*}
Clearly, $J$ has following conclusion.
\begin{lemma}\label{lem4.3}
The functional $J:S_m\to\mathbb{R}$ is $C^1$ and for any $u\in S_m$ and $\varphi\in T_uS_m$,
\begin{equation*}
dJ(u)[\varphi]=d\Phi(u)[\varphi]=dI(s(u)\star u)[s(u)\star\varphi].
\end{equation*}
\end{lemma}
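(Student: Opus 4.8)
The plan is to deduce the statement from two ingredients: the $C^1$ regularity of $\Phi$ on $\mathscr{X}\setminus\{0\}$ already proved in Lemma~\ref{lem4.2}, and the fact that $S_m$ is a $C^1$ submanifold of $\mathscr{X}$. Since every $u\in S_m$ obeys $\|u\|_{L^2(\mathbb{R}^N)}^2=m>0$, we have $S_m\subset\mathscr{X}\setminus\{0\}$, which is open; hence $\Phi$ is defined and of class $C^1$ on a full neighbourhood of $S_m$ in $\mathscr{X}$.

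First I would check that $S_m$ is a $C^1$ submanifold of codimension one. Introduce the constraint map $G:\mathscr{X}\to\mathbb{R}$, $G(u):=\int_{\mathbb{R}^N}|u|^2\,dx$. Because the inclusion $\mathscr{X}\hookrightarrow L^2(\mathbb{R}^N)$ is continuous and $v\mapsto\|v\|_{L^2(\mathbb{R}^N)}^2$ is smooth on $L^2(\mathbb{R}^N)$, the map $G$ is $C^1$ on $\mathscr{X}$ with $dG(u)[\varphi]=2\int_{\mathbb{R}^N}u\varphi\,dx$. For $u\in S_m$, taking $\varphi=u$ gives $dG(u)[u]=2m\neq0$, so $dG(u)$ is a nonzero bounded linear functional, i.e.\ a surjection onto $\mathbb{R}$, and $m$ is a regular value of $G$. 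The implicit function theorem then yields that $S_m=G^{-1}(m)$ is a $C^1$ submanifold with tangent space $T_uS_m=\ker dG(u)=\{\varphi\in\mathscr{X}:\int_{\mathbb{R}^N}u\varphi\,dx=0\}$.

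With these facts in hand, $J=\Phi|_{S_m}$ is the composition $J=\Phi\circ\iota$ of the $C^1$ map $\Phi$ with the inclusion $\iota:S_m\hookrightarrow\mathscr{X}$, whose differential $d\iota(u)$ is precisely the inclusion $T_uS_m\hookrightarrow\mathscr{X}$. Hence $J$ is $C^1$ on $S_m$ and, by the chain rule, for every $\varphi\in T_uS_m$,
\begin{equation*}
dJ(u)[\varphi]=d\Phi(u)[d\iota(u)\varphi]=d\Phi(u)[\varphi].
\end{equation*}
Substituting the explicit expression for $d\Phi(u)[\varphi]$ supplied by Lemma~\ref{lem4.2} gives $dJ(u)[\varphi]=dI(s(u)\star u)[s(u)\star\varphi]$, which is the asserted identity.

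I do not anticipate a genuine difficulty here, as the content is a routine assembly of the submanifold structure of $S_m$ with the already-established regularity of $\Phi$. The only point deserving slight care is that $\mathscr{X}$ is a Banach space and, when $p\neq2$, not a Hilbert space; I would therefore phrase the submersion argument and the description of $T_uS_m$ entirely through the bounded linear functional $dG(u)$, avoiding orthogonal projections so that the reasoning remains valid in the general Banach setting.
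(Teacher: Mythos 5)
Your proposal is correct and matches the paper's treatment: the paper states Lemma~\ref{lem4.3} without proof (``Clearly, $J$ has following conclusion''), regarding it as an immediate consequence of the $C^1$ regularity of $\Phi$ established in Lemma~\ref{lem4.2}, restricted to the submanifold $S_m\subset\mathscr{X}\setminus\{0\}$. Your argument simply supplies the routine details the paper omits --- verifying via the constraint map $G(u)=\int_{\mathbb{R}^N}|u|^2\,dx$ that $S_m$ is a $C^1$ codimension-one submanifold with $T_uS_m=\ker dG(u)$, then applying the chain rule to $J=\Phi\circ\iota$ --- and your care in avoiding Hilbert-space structure is appropriate since $\mathscr{X}$ is only a Banach space for $p\neq2$.
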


We recall below definition from \cite{Ghoussoub1993} and then establish a result to prove that a minimax value of $J$ will produce a Palais-Smale sequence, which is made of elements of $\mathcal{P}_m$ for the constrained functional $I|_{S_m}$ at the same level.

\begin{definition}\label{def4.4}(see \cite{Ghoussoub1993})
Let $B$ be a closed subset of $\mathscr{X}$. We shall say that a class $\mathcal{F}$ of compact subsets of $\mathscr{X}$ is a homotopy-stable family with closed boundary $B$ provided

(a) every set in $\mathcal{F}$ contains $B$;

(b) for any set $A\in\mathcal{F}$ and any homotopy $\eta\in C([0,1]\times\mathscr{X},\mathscr{X})$ satisfying $\eta(t,u)=u$ for all $(t,u)\in(\{0\}\times X)\cup([0,1]\times B)$ we have that $\eta(\{1\}\times A)\in\mathcal{F}$.
\end{definition}
We remark that the case $B=\emptyset$ is admissible.

\begin{lemma}\label{lem4.5}
Let $\mathcal{F}$ be a homotopy-stable family of compact subsets of $S_m$ with $B=\emptyset$ and set
\begin{equation*}
E_{m,\mathcal{F}}:=\inf\limits_{A\in\mathcal{F}}\max\limits_{u\in A}J(u).
\end{equation*}
If $E_{m,\mathcal{F}}>0$, then for the constrained functional $I|_{S_m}$, there exists a Palais-Smale sequence $\{u_n\}\subset\mathcal{P}_m$ at the level $E_{m,\mathcal{F}}$. In particular, if $f$ is odd and $\mathcal{F}$ is the class of all set-with-one-element included in $S_m$, then in addition we have that ${\Vert u^-_n\Vert}_{L^2(\mathbb{R}^N)}\to0$.
\end{lemma}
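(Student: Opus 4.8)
The plan is to apply Ghoussoub's minimax principle \cite{Ghoussoub1993} to the $C^1$ functional $J=\Phi|_{S_m}$ directly on the manifold $S_m$, producing a Palais--Smale sequence $\{v_n\}\subset S_m$ for $J$ at the level $E_{m,\mathcal{F}}$ with the additional \emph{distance property} $\mathrm{dist}_{\mathscr{X}}(v_n,A_n)\to0$, where $\{A_n\}\subset\mathcal{F}$ is a chosen minimizing sequence for the minimax. The candidate Palais--Smale sequence for $I|_{S_m}$ will then be the fiber projection $u_n:=s(v_n)\star v_n$. By Lemma \ref{lem2.4}(i) we have $P(u_n)=0$, and since the fiber map preserves the $L^2$-norm, $u_n\in\mathcal{P}_m$; moreover $I(u_n)=\Phi(v_n)=J(v_n)\to E_{m,\mathcal{F}}$. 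It thus remains to upgrade $\|dJ(v_n)\|_{(T_{v_n}S_m)^*}\to0$ into $\|dI|_{S_m}(u_n)\|_{(T_{u_n}S_m)^*}\to0$.

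The key structural fact is that $\Phi$ is invariant under the fiber flow, since $s(\sigma\star u)=s(u)-\sigma$ gives $\Phi(\sigma\star u)=\Phi(u)$ for every $\sigma\in\mathbb{R}$; consequently $J$ cannot control the scaling variable and $\{v_n\}$ is a priori free to drift to infinity along the fibers. To tame this, I would first replace each $A_n$ by its fiber projection $\{s(w)\star w:w\in A_n\}$. Using the homotopy $\eta(t,w)=(t\,s(w))\star w$ (continuous by Lemma \ref{lem2.4}(iii)) together with the homotopy stability with $B=\emptyset$, this projected set again lies in $\mathcal{F}$, and by the invariance of $J$ its maximal value is unchanged; hence we may assume $A_n\subset\mathcal{P}_m$. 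Then the closest point $w_n\in A_n$ satisfies $I(w_n)=J(w_n)\le\max_{A_n}J\to E_{m,\mathcal{F}}$, so $\{w_n\}$ is bounded in $\mathscr{X}$ by the coercivity of $I$ on $\mathcal{P}_m$ (Lemma \ref{lem2.5}(iv)) and obeys $\|\nabla w_n\|_{L^p(\mathbb{R}^N)}\ge\inf_{\mathcal{P}_m}\|\nabla\cdot\|_{L^p(\mathbb{R}^N)}>0$ by Lemma \ref{lem2.5}(ii). The distance property transports both bounds to $\{v_n\}$, whence $\|\nabla v_n\|_{L^p(\mathbb{R}^N)}$ is bounded above and below. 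I expect this control of the fiber coordinate, equivalently the boundedness of $\{s(v_n)\}$, to be the main obstacle of the argument.

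Granting boundedness of $\{s(v_n)\}$, the transfer of the Palais--Smale condition is then routine. Given $\psi\in T_{u_n}S_m$, set $\varphi:=(-s(v_n))\star\psi$; because the fiber map preserves the $L^2$ inner product, $\varphi\in T_{v_n}S_m$, and Lemma \ref{lem4.3} yields $dI(u_n)[\psi]=dJ(v_n)[\varphi]$. The scaling identities \eqref{bian_huan} express $\|\varphi\|_{\mathscr{X}}$ through $\|\psi\|_{\mathscr{X}}$ with exponential factors of the form $e^{-s(v_n)(p_c+p)}$ and $e^{-s(v_n)p_c}$, which are uniformly bounded once $\{s(v_n)\}$ is; hence $\|\varphi\|_{\mathscr{X}}\le C\|\psi\|_{\mathscr{X}}$ and $\|dI|_{S_m}(u_n)\|\le C\|dJ(v_n)\|\to0$. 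Combined with $I(u_n)\to E_{m,\mathcal{F}}$ and $u_n\in\mathcal{P}_m$, this produces the desired Palais--Smale sequence.

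For the odd case with $\mathcal{F}$ the family of singletons, note first that $E_{m,\mathcal{F}}=\inf_{u\in S_m}\Phi(u)=\inf_{u\in\mathcal{P}_m}I(u)=E_m>0$. Since $f$ is odd, $F$ is even and $|\nabla|u||=|\nabla u|$ a.e., so $I(\sigma\star|u|)=I(\sigma\star u)$ for all $\sigma$; consequently $s(|u|)=s(u)$ and $\Phi(|u|)=\Phi(u)$. I would therefore choose the minimizing singletons $A_n=\{w_n\}$ with $w_n\ge0$ without changing the minimax value. The distance property then gives $\|v_n^-\|_{L^2(\mathbb{R}^N)}\le\|v_n-w_n\|_{L^2(\mathbb{R}^N)}\to0$, and since the fiber map acts by a positive factor it commutes with taking the negative part, so $\|u_n^-\|_{L^2(\mathbb{R}^N)}=\|v_n^-\|_{L^2(\mathbb{R}^N)}\to0$, completing the plan.
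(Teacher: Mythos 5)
Your proposal is correct and follows essentially the same route as the paper's own proof: projecting the minimizing sets onto $\mathcal{P}_m$ via the homotopy $\eta(t,u)=(t\,s(u))\star u$ and homotopy-stability, invoking Ghoussoub's minimax principle with the distance property, pulling the resulting Palais--Smale sequence back through the fiber map, controlling the scaling factor via Lemma \ref{lem2.5}(ii),(iv), and treating the odd case with nonnegative minimizing singletons. The only step stated loosely is the word \emph{equivalently}: the two-sided bounds on $\Vert\nabla v_n\Vert_{L^p(\mathbb{R}^N)}$ alone do not yield boundedness of $\{s(v_n)\}$; one must combine them with the bounds on $\Vert\nabla u_n\Vert_{L^p(\mathbb{R}^N)}$ (lower bound from Lemma \ref{lem2.5}(ii) since $u_n\in\mathcal{P}_m$, upper bound from coercivity since $I(u_n)\to E_{m,\mathcal{F}}$) through the identity $e^{s(v_n)(p_c+p)}=\Vert\nabla u_n\Vert_{L^p(\mathbb{R}^N)}^p/\Vert\nabla v_n\Vert_{L^p(\mathbb{R}^N)}^p$, which is exactly the ratio the paper's proof uses.
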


\begin{proof}
Let $\{A_n\}\subset\mathcal{F}$ be a minimazing sequence of $E_{m,\mathcal{F}}$. Thanks to Lemma \ref{lem2.4}(iii), following continuous function is well defined
\begin{equation*}
\eta:[0,1]\times S_m\to S_m, \quad \eta(t,u)=(ts(u))\star u,
\end{equation*}
which satisfies $\eta(t,u)=u$ for all $(t,u)\in\{0\}\times S_m$. Therefore, by definition of $\mathcal{F}$, we have
\begin{equation}\label{eq4.1}
D_n:=\eta(1,A_n)=\left\{s(u)\star u:u\in A_n\right\}\in\mathcal{F}.
\end{equation}
In particular, $D_n\subset\mathcal{P}_m$ for every $n\in\mathbb{N}^+$. Since $J(s\star u)=J(u)$ for all $s\in\mathbb{R}$ and all $u\in S_m$, it follows that
\begin{equation*}
\max\limits_{D_n}J=\max\limits_{A_n}J\to E_{m,\mathcal{F}}\text{ as }n\to\infty
\end{equation*}
and thus $\{D_n\}\subset\mathcal{F}$ is another minimizing sequence of $E_{m,\mathcal{F}}$. Hence by the minimax principle (see \cite{Ghoussoub1993}), we can obtain a Palais-Smale sequence $\{v_n\}\subset S_m$ for $J$ at the level $E_{m,\mathcal{G}}$ such that dist$_X(v_n,D_n)\to0$ as $n\to\infty$. Define
\begin{equation*}
s_n:=s(v_n)\text{ and }u_n:=s_n\star v_n=s(v_n)\star v_n.
\end{equation*}

We claim that there exists $C>0$ such that $e^{-s_n}\leq C$ for any $n\in\mathbb{N}^+$. Firstly, we notice that
\begin{equation*}
e^{-s_n}={\left(\frac{\int_{\mathbb{R}^N}{\vert\nabla u_n\vert}^pdx}{\int_{\mathbb{R}^N}{\vert\nabla v_n\vert}^pdx}\right)}^{\frac{1}{p_c+p}}.
\end{equation*}
It follows from $\{u_n\}\subset\mathcal{P}_m$ and Lemma \ref{lem2.5}(ii) that $\left\{\Vert\nabla u_n\Vert_{L^p(\mathbb{R}^N)}\right\}$ has a positive bound from below. Concerning the term of $\{v_n\}$, since $D_n\subset\mathcal{P}_m$ for any $n\in\mathbb{N}^+$, we have
\begin{equation*}
\max\limits_{D_n}I=\max\limits_{D_n}J\to E_{m,\mathcal{F}}\text{ as }n\to\infty.
\end{equation*}
 It follows from Lemma \ref{lem2.5}(iv) that $\{D_n\}$ is uniformly bounded in $\mathscr{X}$. Since dist$_\mathscr{X}(v_n,D_n)\to0$, we have $\sup_{n\in\mathbb{N}^+}{\Vert\nabla v_n\Vert}_{L^p(\mathbb{R}^N)}<+\infty$. Therefore, the claim is proved.

Since $\{u_n\}\subset\mathcal{P}_m$, we have $I(u_n)=J(u_n)=J(v_n)\to E_{m,\mathcal{F}}$ as $n\to\infty$. Thus we only need to prove that $\{u_n\}$ is a Palais-Smale sequence for $I$ on $S_m$. For any $\psi\in T_{u_n}S_m$, we have
\begin{equation*}
\int_{\mathbb{R}^N}v_n[(-s_n)\star\psi]dx=\int_{\mathbb{R}^N}(s_n\star v_n)\psi dx=\int_{\mathbb{R}^N}u_n\psi dx=0\text{ as }n\to\infty,
\end{equation*}
which means that $(-s_n)\star\psi\in T_{v_n}S_m$. Furthermore, it follows from above claim that
\begin{equation*}
{\Vert(-s_n)\star\psi\Vert}_\mathscr{X}\leq\max\left\{C^{\frac{p_c+p}{p}}, C^{\frac{p_c}{p}}, 1\right\}{\Vert\psi\Vert}_\mathscr{X},
\end{equation*}
where ${\Vert\cdot\Vert}_{u,*}$ is the dual norm of ${\left(T_uS_m\right)}^*$. From Lemma \ref{lem4.3}, we further have
\begin{equation*}
     \begin{aligned}
          {\Vert dI(u_n)\Vert}_{u_n,*} &=\sup\limits_{\psi\in T_{u_n}S_m, {\Vert\psi\Vert}_\mathscr{X}\leq1}\left\vert dI(u_n)[\psi]\right\vert=\sup\limits_{\psi\in T_{u_n}S_m, {\Vert\psi\Vert}_\mathscr{X}\leq1}\left\vert dI(s_n\star v_n)[s_n\star((-s_n)\star\psi)]\right\vert\\
          & =\sup\limits_{\psi\in T_{u_n}S_m, {\Vert\psi\Vert}_\mathscr{X}\leq1}\left\vert dJ(v_n)[(-s_n)\star\psi]\right\vert\leq{\Vert dJ(v_n)\Vert}_{v_n,*}\sup\limits_{\psi\in T_{u_n}S_m, {\Vert\psi\Vert}_\mathscr{X}\leq1}\left\vert(-s_n)\star\psi\right\vert\\
          & \leq\max\left\{C^{\frac{p_c+p}{p}}, C^{\frac{p_c}{p}}, 1\right\}{\Vert dJ(v_n)\Vert}_{v_n,*}.
     \end{aligned}
\end{equation*}
Since $\{v_n\}\subset S_m$ is a Palais-Smale sequence of $J$, it clearly holds ${\Vert dI(u_n)\Vert}_{u_n,*}\to0$ as $n\to\infty$.

Finally, we notice that the class of all set-with-one-element included in $S_m$ with $B=\emptyset$ is a homotopy-stable family of $S_m$. Moreover, if $f$ is odd, taking this especially choice for $\mathcal{F}$ and by Lemma \ref{lem2.4}(iv), $J(u)$ is an even function in $u\in S_m$. Thus, we can choose a minimizing sequence $\{A_n\}\subset\mathcal{F}$ that consists of nonnegative functions and the sequence $\{D_n\}$ defined in \eqref{eq4.1} retains this property. Since dist$_\mathscr{X}(v_n,D_n)\to0$ as $n\to\infty$, there exists a Palais-Smale sequence $\{u_n\}\subset\mathcal{P}_m$ for $I|_{S_m}$ at the level $E_{m,\mathcal{F}}$ satisfying
\begin{equation*}
{\Vert u^-_n\Vert}^2_{L^2(\mathbb{R}^N)}={\Vert s(v_n)\star v^-_n\Vert}^2_{L^2(\mathbb{R}^N)}={\Vert v^-_n\Vert}^2_{L^2(\mathbb{R}^N)}\to0\text{ as }n\to\infty.
\end{equation*}
This completes the proof.
\end{proof}

\noindent\textit{Proof of Lemma 4.1.}
Lemma \ref{lem4.5} plays an important role in the especial case where $\mathcal{F}$ is the class of all set-with-one-element included in $S_m$. Since $E_m>0$, we only need to prove $E_{m,\mathcal{F}}=E_m$. Firstly, we have
\begin{equation*}
E_{m,\mathcal{F}}=\inf\limits_{A\in\mathcal{F}}\max\limits_{u\in A}J(u)=\inf\limits_{u\in S_m}I(s(u)\star u).
\end{equation*}
Since $s(u)\star u\in\mathcal{P}_m$ for each $u\in S_m$, we have $I(s(u)\star u)\geq E_m$, which implies that $E_{m,\mathcal{F}}\geq E_m$. Moreover, $s(u)=0$ and $I(u)=I(0\star u)\geq E_{m,\mathcal{F}}$ for all $u\in\mathcal{P}_m$, which leads to $E_m\geq E_{m,\mathcal{F}}$. This completes the proof.
$\hfill\square$
\\ \hspace*{\fill}

Since $\mathscr{X}=W^{1,p}(\mathbb{R}^N)\cap L^2(\mathbb{R}^N)$, we have $\mathscr{X}\hookrightarrow L^2(\mathbb{R}^N)$. Denote the $L^2(\mathbb{R}^N)$ norm and scalar product by $\Vert\cdot\Vert_{L^2(\mathbb{R}^N)}$ and $(\cdot,\cdot)$, respectively. For give point $u\in\mathscr{X}$, set
\begin{equation*}
T_uS_m=\left\{v\in\mathscr{X}: (u,v)=\int_{\mathbb{R}^N}uvdx=0\right\}.
\end{equation*}
Denote by $\pi_u$ the restriction to $\mathscr{X}$ of the orthogonal projection onto $T_uS_m$, that is,
$$\pi_uv:=v-(v,u)u =v-u\int_{\mathbb{R}^N}uvdx\text{ for all }v\in\mathscr{X}.$$
Denote by $I|_{S_m}$ the trace of $I$ on $S_m$. Since the functional $I: \mathscr{X}\rightarrow\mathbb{R}$ is class of $C^1$ on $\mathscr{X}$, then $I|_{S_m}$ is a $C^1$ functional on $S_m$. Moreover, for any $u\in{S_m}$ and $w\in T_uS_m$, we have
\begin{equation*}
\langle dI|_{S_m}(u), w\rangle=\langle dI(u), w\rangle.
\end{equation*}
Hence we have following Lions lemma (see \cite{LionsARMAII1983}).

\begin{lemma}\label{Lemma 3}
Let $\{u_n\}$ be a sequence in $S_m$ which is bounded in $\mathscr{X}$. Then following statement are equivalent:

(i) ${\Vert dI|_{S_m}(u_n)\Vert}_{u_n,*}\rightarrow0$ as $n\rightarrow+\infty$.

(ii) $dI(u_n)-\langle dI(u_n), u_n\rangle u_n\rightarrow0$ in $\mathscr{X}^*$.
\end{lemma}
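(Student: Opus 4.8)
The plan is to use the $L^2$-orthogonal splitting $\mathscr{X}=T_{u_n}S_m\oplus\mathbb{R}u_n$ encoded by the projection $\pi_{u_n}$, together with the identity $\langle dI|_{S_m}(u),w\rangle=\langle dI(u),w\rangle$ valid for $w\in T_uS_m$ recalled just above the statement. I would introduce the functional $\Psi_n:=dI(u_n)-\langle dI(u_n),u_n\rangle u_n\in\mathscr{X}^*$, where $u_n$ is regarded as an element of $\mathscr{X}^*$ through the $L^2$ pairing. Conditions (i) and (ii) then merely compare how $dI(u_n)$ acts on the tangent space $T_{u_n}S_m$ and on all of $\mathscr{X}$, so I would establish the equivalence by proving the two implications separately.

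\textbf{Direction (ii) $\Rightarrow$ (i).} For any $w\in T_{u_n}S_m$ one has $(u_n,w)=0$, hence $\langle\Psi_n,w\rangle=\langle dI(u_n),w\rangle=\langle dI|_{S_m}(u_n),w\rangle$. Taking the supremum over $w\in T_{u_n}S_m$ with ${\Vert w\Vert}_{\mathscr{X}}\le1$ gives ${\Vert dI|_{S_m}(u_n)\Vert}_{u_n,*}\le{\Vert\Psi_n\Vert}_{\mathscr{X}^*}$, so (ii) immediately forces (i). This direction uses neither the boundedness of $\{u_n\}$ nor the embedding.

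\textbf{Direction (i) $\Rightarrow$ (ii).} For an arbitrary $v\in\mathscr{X}$ I would decompose $v=\pi_{u_n}v+(v,u_n)u_n$. Substituting this into $\langle\Psi_n,v\rangle$ and using the symmetry of the $L^2$ pairing, the two contributions along $u_n$ cancel exactly, leaving $\langle\Psi_n,v\rangle=\langle dI(u_n),\pi_{u_n}v\rangle$. Since $\pi_{u_n}v\in T_{u_n}S_m$, this is controlled by ${\Vert dI|_{S_m}(u_n)\Vert}_{u_n,*}\,{\Vert\pi_{u_n}v\Vert}_{\mathscr{X}}$, so the entire matter reduces to a uniform bound ${\Vert\pi_{u_n}v\Vert}_{\mathscr{X}}\le C{\Vert v\Vert}_{\mathscr{X}}$ with $C$ independent of $n$.

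This uniform control of the projections is where the hypotheses genuinely enter, and I expect it to be the only real obstacle. From $\mathscr{X}\hookrightarrow L^2(\mathbb{R}^N)$ one has $|(v,u_n)|\le{\Vert v\Vert}_{L^2(\mathbb{R}^N)}{\Vert u_n\Vert}_{L^2(\mathbb{R}^N)}\le C\sqrt{m}\,{\Vert v\Vert}_{\mathscr{X}}$, whence ${\Vert\pi_{u_n}v\Vert}_{\mathscr{X}}\le{\Vert v\Vert}_{\mathscr{X}}+C\sqrt{m}\,{\Vert u_n\Vert}_{\mathscr{X}}{\Vert v\Vert}_{\mathscr{X}}$. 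Because $\{u_n\}$ is bounded in $\mathscr{X}$ by assumption, $\sup_n{\Vert u_n\Vert}_{\mathscr{X}}<+\infty$ and the operator norms of the $\pi_{u_n}$ are bounded uniformly in $n$. Taking the supremum over ${\Vert v\Vert}_{\mathscr{X}}\le1$ then gives ${\Vert\Psi_n\Vert}_{\mathscr{X}^*}\le C'{\Vert dI|_{S_m}(u_n)\Vert}_{u_n,*}\to0$, which is exactly (ii). Once this uniform bound is secured, both implications are purely formal.
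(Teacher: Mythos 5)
Your proof is correct and takes essentially the same route as the paper's: both directions rest on the exact identity $\langle\Psi_n,v\rangle=\langle dI(u_n),\pi_{u_n}v\rangle$ with $\pi_{u_n}v=v-(v,u_n)u_n$, followed by the uniform bound ${\Vert\pi_{u_n}v\Vert}_{\mathscr{X}}\leq\left(1+\sqrt{m}\,\sup_n{\Vert u_n\Vert}_{\mathscr{X}}\right){\Vert v\Vert}_{\mathscr{X}}$ coming from the embedding $\mathscr{X}\hookrightarrow L^2(\mathbb{R}^N)$ and the boundedness of $\{u_n\}$, which is precisely the paper's argument. The only caveat is that your claim $\pi_{u_n}v\in T_{u_n}S_m$ (needed to pass to the dual norm ${\Vert\cdot\Vert}_{u_n,*}$) implicitly uses $(u_n,u_n)=1$ rather than $(u_n,u_n)=m$, but the paper's own proof makes the identical normalization assumption in its decomposition $v=(v,u)u+\pi_uv$, so your attempt shares, rather than introduces, this slip.
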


\begin{proof}
For any $v\in\mathscr{X}$, it has the unique decomposition $v=(v,u)u+\pi_uv$ with $u\in S_m$ and $\pi_u\in T_uS_m$. Since $\vert(u,v)\vert\leq m\Vert v\Vert_{L^2(\mathbb{R}^N)}\leq m\Vert v\Vert_{\mathscr{X}}$, then for all $v\in\mathscr{X}, u\in S_m$, we have
\begin{equation*}
\Vert \pi_uv\Vert_{\mathscr{X}}=\Vert v-(v,u)u\Vert_{\mathscr{X}}\leq\Vert v\Vert_{\mathscr{X}}+\vert(v,u)\vert\Vert u\Vert_{\mathscr{X}}\leq(1+m\Vert u\Vert_{\mathscr{X}})\Vert v\Vert_{\mathscr{X}}.
\end{equation*}
Let $d\widetilde{I}(u)=dI(u)-\langle dI(u), u\rangle u$, then for each $v\in\mathscr{X}$, we have
\begin{equation*}
     \begin{aligned}
     d\widetilde{I}(u)v &=\left(dI(u)-\langle dI(u), u\rangle u\right)v\\
     & =\int_{\mathbb{R}^N}\vert\nabla u\vert^{p-2}\nabla u\cdot\nabla vdx+\int_{\mathbb{R}^N}\vert u\vert^{p-2}uvdx-\int_{\mathbb{R}^N}F(u)vdx-\langle dI(u), u\rangle\int_{\mathbb{R}^N}uvdx.
     \end{aligned}
\end{equation*}
Clearly, $d\widetilde{I}(u)\in\mathscr{X}^*$ and $\langle d\widetilde{I}(u), w\rangle=\langle dI|_{S_m}(u), w\rangle$ for all $w\in T_uS_m$, which implies that ${\Vert dI|_{S_m}(u)\Vert}_{u,*}\leq{\Vert d\widetilde{I}(u)\Vert}_{\mathscr{X}^*}$ for all $u\in S_m$ and thus we deduce (ii) $\Rightarrow$ (i).

Suppose $\{u_n\}\subset S_m$ is a bounded sequence such that ${\Vert dI|_{S_m}(u_n)\Vert}_{u_n,*}\rightarrow0$ and $\langle d\widetilde{I}(u_n), v\rangle=\langle dI(u_n), \pi_{u_n}v\rangle$ for any $v\in\mathscr{X}$. Thus, we have
\begin{equation*}
     \begin{aligned}
     \vert\langle d\widetilde{I}(u_n), v\rangle\vert &=\vert\langle dI(u_n), v\rangle-\langle \langle dI(u_n), u_n\rangle u_n, v\rangle\vert\\
     & =\vert\langle dI(u_n), v\rangle-\langle dI(u_n), u_n\rangle \langle u_n, v\rangle\vert\\
     & =\vert\langle dI(u_n), v\rangle-\langle dI(u_n), u_n\rangle \langle u_n, (v,u_n)u_n\rangle\vert\\
     & =\vert\langle dI(u_n), v-m(v,u_n)u_n\rangle\vert\\
     & \leq \Vert dI(u_n)\Vert_{\mathscr{X}^*}\Vert v-m(v,u_n)u_n\Vert_{\mathscr{X}}\\
     & \leq \Vert dI|_{S_m}(u_n)\Vert_{u_n,*}\left(\Vert v\Vert_{\mathscr{X}}+m^2\Vert v\Vert_{\mathscr{X}}\Vert u_n\Vert_{\mathscr{X}}\right)\\
     & =\Vert dI|_{S_m}(u_n)\Vert_{u_n,*}\left(1+m^2\Vert u_n\Vert_{\mathscr{X}}\right)\Vert v\Vert_{\mathscr{X}}.
     \end{aligned}
\end{equation*}
Since $\{u_n\}$ is bounded, then $\lim_{n\to\infty}\Vert d\widetilde{I}(u_n)\Vert_{\mathscr{X}^*}=0$ and thus we deduce (i) $\Rightarrow$ (ii).
\end{proof}

\begin{lemma}\label{lem2.6}
Assume $N\geq3$ and $f$ satisfies $(f0)-(f2)$. If $\{u_n\}\subset\mathscr{X}$ is bounded and $u_n\to u$ a.e. in $\mathbb{R}^N$ as $n\to\infty$ for some $u\in\mathscr{X}$, then
\begin{equation}\label{eq2.14}
\lim\limits_{n\to+\infty}\int_{\mathbb{R}^N}\vert F(u_n)-F(u_n-u)-F(u)\vert dx=0.
\end{equation}
\end{lemma}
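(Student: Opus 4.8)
The plan is to adapt the Brezis--Lieb splitting technique to the general nonlinearity $F$, using only the growth information contained in $(f1)$ and $(f2)$. The first ingredient I would record is the pointwise growth bound on $f$: since $p_*-1=p-1+\frac{2p}{N}$ and $p^*-1=\frac{Np-N+p}{N-p}$, conditions $(f1)$ and $(f2)$ yield, for every $\varepsilon>0$, a constant $C_\varepsilon>0$ with $|f(t)|\le\varepsilon|t|^{p_*-1}+C_\varepsilon|t|^{p^*-1}$ for all $t\in\mathbb{R}$. Writing $F(a+b)-F(a)=\int_0^1 f(a+sb)\,b\,ds$, inserting this bound, estimating $|a+sb|\le|a|+|b|$, and applying Young's inequality to separate the mixed terms, I would obtain the key splitting estimate: for every $\varepsilon>0$ there is $C_\varepsilon>0$ such that
\begin{equation*}
|F(a+b)-F(a)|\le\varepsilon\big(|a|^{p_*}+|a|^{p^*}\big)+C_\varepsilon\big(|b|^{p_*}+|b|^{p^*}\big)\quad\text{for all }a,b\in\mathbb{R}.
\end{equation*}

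Next I would set $a=u_n-u$, $b=u$ and introduce the truncated functions
\begin{equation*}
\Psi_{\varepsilon,n}:=\Big(\big|F(u_n)-F(u_n-u)-F(u)\big|-\varepsilon\big(|u_n-u|^{p_*}+|u_n-u|^{p^*}\big)\Big)^+.
\end{equation*}
The splitting estimate above, together with $|F(u)|\in L^1(\mathbb{R}^N)$ (a consequence of the same growth bounds and $u\in\mathscr{X}$), shows that $0\le\Psi_{\varepsilon,n}\le C_\varepsilon(|u|^{p_*}+|u|^{p^*})+|F(u)|$, an $L^1$ bound independent of $n$. Since $u_n\to u$ a.e. and $F$ is continuous with $F(0)=0$, one has $F(u_n)-F(u_n-u)-F(u)\to0$ a.e., hence $\Psi_{\varepsilon,n}\to0$ a.e. The dominated convergence theorem then gives $\int_{\mathbb{R}^N}\Psi_{\varepsilon,n}\,dx\to0$ as $n\to\infty$.

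Finally I would close the argument by the definition of $\Psi_{\varepsilon,n}$, which yields
\begin{equation*}
\int_{\mathbb{R}^N}\big|F(u_n)-F(u_n-u)-F(u)\big|\,dx\le\int_{\mathbb{R}^N}\Psi_{\varepsilon,n}\,dx+\varepsilon\int_{\mathbb{R}^N}\big(|u_n-u|^{p_*}+|u_n-u|^{p^*}\big)\,dx.
\end{equation*}
Because $\{u_n\}$ is bounded in $\mathscr{X}=W^{1,p}(\mathbb{R}^N)\cap L^2(\mathbb{R}^N)$, the Gagliardo--Nirenberg inequality controls $\|u_n-u\|_{L^{p_*}(\mathbb{R}^N)}$ and the Sobolev embedding controls $\|u_n-u\|_{L^{p^*}(\mathbb{R}^N)}$, so the last integral is bounded uniformly in $n$ by a constant $M$. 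Passing to the $\limsup$ gives $\limsup_{n\to\infty}\int_{\mathbb{R}^N}|F(u_n)-F(u_n-u)-F(u)|\,dx\le M\varepsilon$, and letting $\varepsilon\to0^+$ proves \eqref{eq2.14}.

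The main obstacle is the derivation of the splitting estimate in the first step: one must isolate the $|a|$-terms with an arbitrarily small coefficient while keeping the $|b|$-terms controlled by $C_\varepsilon$, which requires a careful Young-inequality bookkeeping of the cross terms $|a|^{p_*-1}|b|$ and $|a|^{p^*-1}|b|$. Everything afterwards is a routine combination of dominated convergence and the uniform $L^{p_*}$--$L^{p^*}$ bound coming from the $\mathscr{X}$-boundedness of $\{u_n\}$.
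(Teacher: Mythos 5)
Your proposal is correct and follows essentially the same route as the paper: both derive from $(f0)$--$(f2)$ a splitting inequality of the form $\vert F(a+b)-F(a)\vert\leq\varepsilon\varphi(a)+\psi_\varepsilon(b)$ and then apply the Brezis--Lieb mechanism to the decomposition $u_n=(u_n-u)+u$, using the uniform $\mathscr{X}$-bound to control the $\varphi$-terms and the integrability of $\psi_\varepsilon(u)$ and $F(u)$ for the rest. The only differences are cosmetic: the paper chooses $\varphi(a)=C\left(a^2+\vert 2a\vert^{p^*}\right)$ (exploiting the $L^2$ part of $\mathscr{X}$) and invokes the Brezis--Lieb theorem \cite{BrezisPAMS1983} as a black box, whereas you choose $\varphi(a)=\vert a\vert^{p_*}+\vert a\vert^{p^*}$ (controlled via Gagliardo--Nirenberg and Sobolev) and inline that theorem's proof through the truncation $\Psi_{\varepsilon,n}$ and dominated convergence.
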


\begin{proof}
Since $\{u_n\}$ is bounded and $u_n\to u$ a.e. in $\mathbb{R}^N$ as $n\to\infty$ for some $u\in\mathscr{X}$, there exists $M>0$ large enough such that $$\sup_{n\geq1}{\Vert u_n\Vert}_\mathscr{X}<M,~\sup_{n\geq1}{\Vert u_n-u\Vert}_\mathscr{X}<M\text{ and }\sup_{n\geq1}{\Vert u\Vert}_\mathscr{X}<M.$$
Thanks to $(f0)-(f2)$, there exists a positive constant $C$ such that
\begin{equation}\label{eq(*)}
\vert f(t)\vert\leq C\left(\vert t\vert+{\vert t\vert}^{p^*-1}\right)\text{ for all } t\in\mathbb{R}.
\end{equation}
Fixed any $a, b\in\mathbb{R}$, by \eqref{eq(*)} and Young's inequality, for any $\varepsilon>0$, we have
\begin{equation*}
     \begin{aligned}
          \vert F(a+b)-F(a)\vert &=\left\vert\int^1_0f(a+\theta b)bd\theta\right\vert\\
          & \leq C\int^1_0\left(\vert a+\theta b\vert+{\vert a+\theta b\vert}^{p^*-1}\right)\vert b\vert d\theta\\
          & \leq C\int^1_0\left[\vert a\vert+\vert\theta\vert\vert b\vert+2^{p^*}\left({\vert a\vert}^{p^*-1}+{\vert\theta b\vert}^{p^*-1}\right)\right]\vert b\vert d\theta\\
          & \leq C \left[\vert a\vert+\vert b\vert+2^{p^*}\left({\vert a\vert}^{p^*-1}+{\vert b\vert}^{p^*-1}\right)\right]\vert b\vert\\
          & \leq\varepsilon C\left(a^2+{\vert2a\vert}^{p^*}\right)+C\left[\left(1+\frac{1}{\varepsilon}\right)b^2+\left(1+{\varepsilon}^{1-p^*}\right){\vert2b\vert}^{p^*}\right]\\
          & =:\varepsilon\varphi(a)+\psi_\varepsilon(b)
     \end{aligned}
\end{equation*}
In particular, $\vert F(b)\vert\leq\psi_\varepsilon(b)$ for all $b\in\mathbb{R}$. By the Gagliardo-Nirenberg inequality, we know that $\int_{\mathbb{R}^N}\varphi(u_n-u)dx$ is bounded uniformly in $\varepsilon$ and $n$, $\int_{\mathbb{R}^N}\psi_\varepsilon(u)dx<+\infty$ for any $\varepsilon>0$ and $F(u)\in L^1(\mathbb{R}^N)$. Using the Brezis-Lieb Theorem (see \cite{BrezisPAMS1983}),  \eqref{eq2.14} clearly holds.
\end{proof}

\begin{lemma}\label{lemma_xin}
Assume $N\geq3$, $f$ satisfies $(f0)-(f5)$. Suppose ${\Vert u_0\Vert}^2_{L^2(\mathbb{R}^N)}=m$ and
\begin{equation}\label{(p_m)}
-\Delta_{p}u_0+{\vert u_0\vert}^{p-2}u_0+\mu_0u_0=f(u_0)
\end{equation}
If there exists a sufficiently small constant $m_0>0$ such that $m\in(0,m_0)$, then $\mu_0>0$.
\end{lemma}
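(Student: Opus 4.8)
The plan is to extract two scalar identities from the equation and combine them with $(f5)$, a Gagliardo--Nirenberg estimate, and the blow-up $\lim_{m\to0^+}E_m=+\infty$ from Lemma \ref{lem3.5} (here $u_0$ is taken to be the ground state, so $I(u_0)=E_m$ and $u_0\in\mathcal{P}_m$). Write $A:=\|\nabla u_0\|_{L^p(\mathbb{R}^N)}^p$, $B:=\|u_0\|_{L^p(\mathbb{R}^N)}^p$, $G:=\int_{\mathbb{R}^N}f(u_0)u_0\,dx$ and $H:=\int_{\mathbb{R}^N}F(u_0)\,dx$. Testing \eqref{(p_m)} with $u_0$ gives the Nehari identity $A+B+\mu_0 m=G$, while $u_0\in\mathcal{P}_m$ gives the Pohozaev identity $P(u_0)=0$, that is $\frac{p_c+p}{p}A+\frac{p_c}{p}B=\frac N2(G-2H)$. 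Eliminating $G$ between the two and using $\tfrac1p-\tfrac1{p^*}=\tfrac1N$ and $\tfrac2p-\tfrac2N=\tfrac2{p^*}$, I would obtain the two clean relations
\[
\mu_0 m=2H-\frac{2}{p^*}A-\frac{2}{p}B,\qquad I(u_0)=\frac1N A-\frac{\mu_0 m}{2}.
\]

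Next I would feed in $(f5)$. Since $G-2H=c_1A+c_2B$ with $c_1=1-\tfrac2{p^*}$ and $c_2=1-\tfrac2p$, the bound $G\le\gamma H$ yields $H\ge(c_1A+c_2B)/(\gamma-2)$ (legitimate because $\gamma>p_*>2$). Substituting this into the formula for $\mu_0 m$ gives
\[
\mu_0 m\ \ge\ 2\kappa_1 A-2\kappa_2 B,\qquad \kappa_1=\frac{p^*-\gamma}{p^*(\gamma-2)},\quad \kappa_2=\frac{\gamma-p}{p(\gamma-2)}.
\]
The positivity $\kappa_1>0$ is precisely where $\gamma<p^*$ from $(f5)$ enters, and $\kappa_2>0$ because $\gamma>p_*>p$. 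The genuinely awkward term is $-2\kappa_2 B$: condition $(f5)$ cannot absorb the $\|u_0\|_{L^p}^p$ contribution, and this wrong-signed term is the main obstacle.

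To overcome it I would argue by contradiction, assuming $\mu_0\le0$. Then $2\kappa_1 A-2\kappa_2 B\le \mu_0 m\le0$, so $\kappa_1 A\le\kappa_2 B$. The Gagliardo--Nirenberg inequality (valid since $2\le p<p^*$) gives $B\le C\,A^{\theta}m^{p(1-\theta)/2}$ for some $\theta\in[0,1)$, whence $A^{1-\theta}\le(\kappa_2 C/\kappa_1)\,m^{p(1-\theta)/2}$ and therefore $A\le C_\ast m^{p/2}$, with the exponent $p/2$ independent of $\theta$. Consequently $A\to0$ and $B\to0$ as $m\to0^+$, so $-\mu_0 m=\tfrac2{p^*}A+\tfrac2pB-2H\le\tfrac2{p^*}A+\tfrac2pB\to0$. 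Inserting these limits into $I(u_0)=\tfrac1N A-\tfrac{\mu_0 m}{2}$ forces $E_m=I(u_0)\to0$, contradicting $\lim_{m\to0^+}E_m=+\infty$ from Lemma \ref{lem3.5}. Hence $\mu_0>0$ for all $m\in(0,m_0)$ once $m_0$ is small enough.

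I expect the derivation of the Pohozaev identity and the algebra pinning down $\kappa_1,\kappa_2$ to be routine; the crux is recognizing that the wrong-signed $\|u_0\|_{L^p}^p$ term can only be tamed by pairing Gagliardo--Nirenberg interpolation with the smallness of $m$ and the divergence of $E_m$. This is exactly the delicate interplay between $p$ and $N$ flagged in the introduction (the exponent $p/2$ survives uniformly in $\theta$ only because $p<N$).
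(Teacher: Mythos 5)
Your proof is correct, but it takes a genuinely different route from the paper's, which never uses the ground state energy at all. The paper works directly on the solution: it combines the Nehari identity with the Guedda--V\'eron Pohozaev identity \cite{GueddaNA1989}, uses $(f5)$ together with the growth conditions $(f1)$--$(f2)$ to bound $\frac{p_c+p}{p}\Vert\nabla u_0\Vert_{L^p}^p+\frac{p_c}{p}\Vert u_0\Vert_{L^p}^p$ by $C_1\Vert u_0\Vert_{L^{p_1}}^{p_1}+C_2\Vert u_0\Vert_{L^{p_2}}^{p_2}$ with mass-supercritical exponents $p_1=p_*+\varepsilon$, $p_2=p^*-\delta$, and then invokes \emph{sharp} Gagliardo--Nirenberg inequalities to conclude that $\Vert\nabla u_0\Vert_{L^p}$ must blow up as $m\to0^+$; the contradiction for $\mu_0\le0$ then comes from the negativity of $\frac{N-p}{p}-\frac{N}{\gamma}$ (the same place where $\gamma<p^*$ enters as in your $\kappa_1>0$), which multiplies the dominant power of $\Vert\nabla u_0\Vert_{L^p}$, against a lower-order interpolation term. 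You argue in the opposite direction: assuming $\mu_0\le0$, you force $\Vert\nabla u_0\Vert_{L^p}^p\le C_*m^{p/2}\to0$, hence $I(u_0)\to0$, and the contradiction is delegated to the blow-up $E_m\to+\infty$ of Lemma \ref{lem3.5}. Your version is more economical: beyond the two identities it needs only $(f5)$, the positivity of $F$ from Lemma \ref{lem2.3} (which is what justifies discarding $-2H$), and elementary subcritical interpolation, with no sharp constants and no $\varepsilon,\delta$ bookkeeping. The paper's version buys self-containedness: it uses nothing from Section 3, and it applies verbatim to every solution of \eqref{(p_m)}, not only to minimizers.

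That last contrast points to the one defect in your write-up: the lemma does not assume $u_0$ is a ground state, so your step $I(u_0)=E_m$ proves a formally weaker statement than the one claimed. The repair costs one line and is already implicit in your computation: for any solution of \eqref{(p_m)}, the Nehari identity and the Guedda--V\'eron Pohozaev identity combine to give $P(u_0)=0$ --- this, and not the ground-state property, is the correct justification of your ``second clean relation'' --- hence $u_0\in\mathcal{P}_m$ and $I(u_0)\ge E_m$, and your contradiction survives with ``$E_m=I(u_0)\to0$'' replaced by ``$E_m\le I(u_0)\to0$'' against $E_m\to+\infty$. (Relatedly, ``$A\to0$ as $m\to0^+$'' tacitly quantifies over a family of solutions; either run the argument along an arbitrary sequence $m_k\to0^+$ of masses carrying solutions with $\mu_0^{(k)}\le0$, or keep the bound quantitative, $E_m\le I(u_0)\le Cm^{p/2}$ with $C=C(N,p,\gamma)$, which is incompatible with Lemma \ref{lem3.5} once $m_0$ is small.) With these cosmetic adjustments your proof is complete and covers the stated generality.
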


\begin{proof}
By $(f0)-(f2)$, for any positive constants $\varepsilon$ and $\delta$, there exists $c_1({\varepsilon}), c_2({\varepsilon})>0$ such that
\begin{equation*}
\vert F(t)\vert\leq c_1{\vert t\vert}^{p_1}+c_2{\vert t\vert}^{p_2},
\end{equation*}
where $p_1=p_*+\varepsilon, p_2=p^*-\delta$. We can choose test function $u_0$ given in \eqref{(p_m)} to obtain
\begin{equation}\label{eq55}
\int_{\mathbb{R}^N}\vert\nabla u_0\vert^pdx+\int_{\mathbb{R}^N}\vert u_0\vert^pdx+\mu_0\int_{\mathbb{R}^N}u_0^2dx=\int_{\mathbb{R}^N}f(u_0)u_0dx.
\end{equation}
It follows from the Pohozaev identity \cite{GueddaNA1989} that
\begin{equation}\label{eq56}
\frac{N-p}{p}\int_{\mathbb{R}^N}\vert\nabla u_0\vert^pdx+\frac{N}{p}\int_{\mathbb{R}^N}\vert u_0\vert^pdx+\frac{N}{2}\mu_0\int_{\mathbb{R}^N}u_0^2dx=N\int_{\mathbb{R}^N}F(u_0)dx.
\end{equation}
Combining \eqref{eq55}, \eqref{eq56} with $(f5)$, we can obtain that
\begin{equation}\label{eq57}
     \begin{aligned}
     \frac{p_c+p}{p}&\int_{\mathbb{R}^N}\vert\nabla u_0\vert^pdx+\frac{p_c}{p}\int_{\mathbb{R}^N}\vert u_0\vert^pdx
      \leq\frac{N}{2}\int_{\mathbb{R}^N}\vert f(u_0)u_0\vert dx+N\int_{\mathbb{R}^N}\vert F(u_0)\vert dx\\
     & \leq\frac{(\gamma+2)N}{2}\int_{\mathbb{R}^N}\vert F(u_0)\vert dx\leq C_1\int_{\mathbb{R}^N}\vert u_0\vert^{p_1}dx+C_2\int_{\mathbb{R}^N}\vert u_0\vert^{p_2}dx,
     \end{aligned}
\end{equation}
and
\begin{equation}\label{eq58}
     \begin{aligned}
     &\left(\frac{N-p}{p}-\frac{N}{\gamma}\right)\int_{\mathbb{R}^N}\vert\nabla u_0\vert^pdx+\left(\frac{1}{p}-\frac{1}{\gamma}\right)N\int_{\mathbb{R}^N}\vert u_0\vert^pdx\\
     & =-\left(\frac{1}{2}-\frac{1}{\gamma}\right)N\mu_0\int_{\mathbb{R}^N}\vert u_0\vert^2dx+\frac{N}{\gamma}\int_{\mathbb{R}^N}\gamma F(t)-f(t)tdx
     \end{aligned}
\end{equation}
Using sharp Gagliardo-Nirenberg inequalities \cite{AguehNDE2008}, we further have
\begin{equation}\label{eq59}
     \begin{aligned}
     & C_1\int_{\mathbb{R}^N}\vert u_0\vert^{p_1}dx\leq C_3\Vert\nabla u_0\Vert_{L^p}^{\frac{Np(p_1-2)}{Np+2p-2N}}\Vert u_0\Vert_{L^2}^{\frac{2Np-2p_1(N-p)}{Np+2p-2N}},\\
     & C_2\int_{\mathbb{R}^N}\vert u_0\vert^{p_2}dx\leq C_4\Vert\nabla u_0\Vert_{L^p}^{\frac{Np(p_2-2)}{Np+2p-2N}}\Vert u_0\Vert_{L^2}^{\frac{2Np-2p_2(N-p)}{Np+2p-2N}}.
     \end{aligned}
\end{equation}
For the convenience of calculation, set
\begin{equation*}
g(x)=\frac{Np(x-2)}{Np+2p-2N} \  \text{ and } \  h(x)=\frac{2Np-2x(N-p)}{Np+2p-2N}.
\end{equation*}
Then $g(p_1), g(p_2), h(p_1), h(p_2)>0$ by a direct computation. It follows from \eqref{eq57} and \eqref{eq59} that
\begin{equation*}
     \begin{aligned}
     &\frac{p_c+p}{p}\int_{\mathbb{R}^N}\vert\nabla u_0\vert^pdx-C_3\Vert\nabla u_0\Vert_{L^p}^{g(p_1)}\Vert u_0\Vert_{L^2}^{h(p_1)}-C_4\Vert\nabla u_0\Vert_{L^p}^{g(p_2)}\Vert u_0\Vert_{L^2}^{h(p_2)}\\
     & \leq\frac{p_c+p}{p}\int_{\mathbb{R}^N}\vert\nabla u_0\vert^pdx-C_1\int_{\mathbb{R}^N}\vert u_0\vert^{p_1}dx-C_2\int_{\mathbb{R}^N}\vert u_0\vert^{p_2}dx\leq0,
     \end{aligned}
\end{equation*}
which leads to
\begin{equation}\label{eq61}
\frac{p_c+p}{p}\int_{\mathbb{R}^N}\vert\nabla u_0\vert^pdx-C_3\Vert\nabla u_0\Vert_{L^p}^{g(p_1)}\Vert u_0\Vert_{L^2}^{h(p_1)}-C_4\Vert\nabla u_0\Vert_{L^p}^{g(p_2)}\Vert u_0\Vert_{L^2}^{h(p_2)}\leq0.
\end{equation}
Thus, there exists $m_0>0$ small enough such that $\Vert u_0\Vert_{L^2}^2=m\le m_0$, then $\Vert\nabla u_0\Vert_{L^p}$ must be large. Moreover, using the interpolation inequalities for $L^p$ and the sharp Gagliardo-Nirenberg inequalities for $L^{p_1}$, we obtain
\begin{equation}\label{eq62}
     \begin{aligned}
     \int_{\mathbb{R}^N}\vert u_0\vert^pdx &\leq\Vert u_0\Vert_{L^2}^{p\theta}\Vert u_0\Vert_{L^{p_1}}^{p(1-\theta)}\\
     & \leq C_0\Vert u_0\Vert_{L^2}^{p\theta}\Vert u_0\Vert_{L^2}^{p(1-\theta)\frac{h(p_1)}{p_1}}\Vert\nabla u_0\Vert_{L^p}^{p(1-\theta)\frac{g(p_1)}{p_1}}\text{ for }\theta\in(0,1).
     \end{aligned}
\end{equation}
It follows from \eqref{eq58}, \eqref{eq61} and \eqref{eq62} that
\begin{equation*}
     \begin{aligned}
     -\left(\frac{1}{2}-\frac{1}{\gamma}\right)N\mu_0\int_{\mathbb{R}^N}\vert u_0\vert^2dx &\leq\left(\frac{N-p}{p}-\frac{N}{\gamma}\right)\Vert\nabla u_0\Vert_{L^p}^p+\left(\frac{1}{p}-\frac{1}{\gamma}\right)N\Vert u_0\Vert_{L^p}^p\\
     & \leq\left(\frac{N-p}{p}-\frac{N}{\gamma}\right)\Vert\nabla u_0\Vert_{L^p}^p+C_5\Vert u_0\Vert_{L^2}^{p\left[\theta+(1-\theta)\frac{h(p_1)}{p_1}\right]}\Vert\nabla u_0\Vert_{L^p}^{p(1-\theta)\frac{g(p_1)}{p_1}}.
     \end{aligned}
\end{equation*}
A direct computation yields that
\begin{equation*}
\frac{g(p_1)}{p_1}<1, \frac{N-p}{p}-\frac{N}{\gamma}<0 \text{ and } \ \frac{Np-p_1(N-p)}{Np+2p-2N}>0.
\end{equation*}
We can choose $m_0>0$ sufficiently small such that $\Vert u_0\Vert_{L^2}^2=m<m_0$, then $\mu_0=0$ clearly contradicts with above inequality, which leads to $\mu_0>0$.
\end{proof}

\begin{lemma}\label{lem4.6}
Take any $m\in(0,m_0)$ for some $m_0>0$ small and let $\{u_n\}\subset S_m$ be any bounded Palais-Smale sequence for the constrained functional $I|_{S_m}$ at the level $E_m>0$ satisfying $P(u_n)\to0$. If condition $(f5)$ holds, then there exists $u\in S_m$ and $\mu>0$ such that, up to a subsequence and some translations in $\mathbb{R}^N$, $u_n\to u$ strongly in $\mathscr{X}$ and $-\Delta_{p}u+{\vert u\vert}^{p-2}u=f(u)-\mu u$.
\end{lemma}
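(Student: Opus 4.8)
The plan is to run a concentration–compactness argument adapted to the quasilinear setting, the delicate point being that weak $W^{1,p}$-convergence does not by itself pass to the limit in $-\Delta_p$.

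\textbf{Step 1 (non-vanishing).} Since $\{u_n\}$ is bounded in $\mathscr{X}$, I first exclude vanishing by putting $\rho:=\limsup_{n}\sup_{y\in\mathbb{R}^N}\int_{B(y,1)}|u_n|^p\,dx$. If $\rho=0$, then Lemma \ref{Lemma I.1} gives $u_n\to0$ in $L^{p_*}(\mathbb{R}^N)$, so Lemma \ref{lem2.1}(ii) yields $\int_{\mathbb{R}^N}F(u_n)\,dx\to0$ and $\int_{\mathbb{R}^N}\widetilde{F}(u_n)\,dx\to0$; inserting this into $P(u_n)\to0$ forces $\|\nabla u_n\|_{L^p(\mathbb{R}^N)}\to0$ and $\|u_n\|_{L^p(\mathbb{R}^N)}\to0$, whence $I(u_n)\to0$, contradicting $E_m>0$. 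Hence $\rho>0$, and after a suitable translation by $\{y_n\}\subset\mathbb{R}^N$ and relabelling (which leaves $S_m$, $I$, $P$ and the Palais--Smale property invariant) I may assume $u_n\rightharpoonup u$ in $\mathscr{X}$ with $u\neq0$ and $u_n\to u$ a.e.

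\textbf{Step 2 (Euler--Lagrange equation for the limit).} Applying Lemma \ref{Lemma 3} to the Palais--Smale condition, $dI(u_n)+\mu_n u_n\to0$ in $\mathscr{X}^{*}$ for some real sequence $\{\mu_n\}$ which is bounded by $(f1)$--$(f2)$ and the boundedness of $\{u_n\}$; thus $\mu_n\to\mu$ along a subsequence. The crux is to pass to the limit in the quasilinear term. I would test the approximate equation against $(u_n-u)\phi$ with $\phi\in C_c^\infty(\mathbb{R}^N)$, $\phi\ge0$: the zero-order, $L^p$ and nonlinear contributions vanish by local strong convergence, leaving $\int_{\mathbb{R}^N}(|\nabla u_n|^{p-2}\nabla u_n-|\nabla u|^{p-2}\nabla u)\cdot\nabla(u_n-u)\,\phi\,dx\to0$. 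By the monotonicity inequality for $p\ge2$ this forces $\nabla u_n\to\nabla u$ in $L^p_{loc}(\mathbb{R}^N)$, hence a.e.; then $|\nabla u_n|^{p-2}\nabla u_n\rightharpoonup|\nabla u|^{p-2}\nabla u$ in $L^{p'}(\mathbb{R}^N)$ and $u$ solves $-\Delta_{p}u+|u|^{p-2}u=f(u)-\mu u$ weakly. I expect \emph{this a.e.-gradient convergence to be the main obstacle}, precisely the $p$-versus-$N$ difficulty flagged in the introduction; it is automatic only when $p=2$.

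\textbf{Step 3 (splitting and energy level).} Writing $v_n:=u_n-u$, the a.e. gradient convergence licenses the Brezis--Lieb splitting of $\|\nabla\cdot\|_{L^p(\mathbb{R}^N)}^p$ and $\|\cdot\|_{L^p(\mathbb{R}^N)}^p$, while Lemma \ref{lem2.6} handles $\int F$; hence $I(u_n)=I(u)+I(v_n)+o(1)$ and $P(u_n)=P(u)+P(v_n)+o(1)$. Since $u$ solves the equation it satisfies $P(u)=0$ (combine the Nehari and Pohozaev identities as in the proof of Lemma \ref{lemma_xin}), so $P(v_n)\to0$, and $\|v_n\|_{L^2(\mathbb{R}^N)}^2\to m-m_1$ with $m_1:=\|u\|_{L^2(\mathbb{R}^N)}^2\in(0,m]$. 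Using only $f(t)t>p_*F(t)\ge0$ from Lemma \ref{lem2.3}, i.e. $F\le\frac{1}{p_*-2}\widetilde{F}$ with $p_*-2=\frac{2(p_c+p)}{N}$, together with $P(v_n)\to0$, a direct computation gives $I(v_n)\ge\frac{1}{p_c+p}\|v_n\|_{L^p(\mathbb{R}^N)}^p+o(1)\ge o(1)$, so $\liminf_n I(v_n)\ge0$. Since $u\in\mathcal{P}_{m_1}$ we have $I(u)\ge E_{m_1}\ge E_m$ by Lemma \ref{lem3.2}; combined with $E_m=I(u)+\lim_n I(v_n)$ this forces $I(u)=E_{m_1}=E_m$ and $\lim_n I(v_n)=0$.

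\textbf{Step 4 (positivity of $\mu$, no mass loss, strong convergence).} Here $(f5)$ enters: since $u$ solves the equation with $\|u\|_{L^2(\mathbb{R}^N)}^2=m_1\le m<m_0$, Lemma \ref{lemma_xin} gives $\mu>0$. As $u$ realises $I(u)=E_{m_1}$ and solves the equation, Lemma \ref{lem3.4} applied at mass $m_1$ yields $E_{m_1}>E_{m'}$ for every $m'>m_1$; were $m_1<m$ this would give $E_{m_1}>E_m$, contradicting $E_{m_1}=E_m$. Hence $m_1=m$ and $\|v_n\|_{L^2(\mathbb{R}^N)}\to0$. Finally, the scaling-invariant Gagliardo--Nirenberg inequality $\int_{\mathbb{R}^N}|v_n|^{p_*}\,dx\le C\|\nabla v_n\|_{L^p(\mathbb{R}^N)}^p\|v_n\|_{L^2(\mathbb{R}^N)}^{2p/N}$, with $\{\|\nabla v_n\|_{L^p(\mathbb{R}^N)}\}$ bounded, gives $\|v_n\|_{L^{p_*}(\mathbb{R}^N)}\to0$, so $\int F(v_n)\to0$ by Lemma \ref{lem2.1}(ii); then $I(v_n)=\frac1p\|\nabla v_n\|_{L^p(\mathbb{R}^N)}^p+\frac1p\|v_n\|_{L^p(\mathbb{R}^N)}^p-\int_{\mathbb{R}^N}F(v_n)\,dx\to0$ with nonnegative leading terms forces $\|\nabla v_n\|_{L^p(\mathbb{R}^N)}\to0$ and $\|v_n\|_{L^p(\mathbb{R}^N)}\to0$. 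Together with $\|v_n\|_{L^2(\mathbb{R}^N)}\to0$ this gives $u_n\to u$ strongly in $\mathscr{X}$, with $u\in S_m$ and $\mu>0$, as claimed.
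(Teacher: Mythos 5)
Your Steps 1, 2 and 4 track the paper's own argument: non-vanishing via Lemma \ref{Lemma I.1} and Lemma \ref{lem2.1}(ii), a.e.\ convergence of gradients through the monotonicity inequality for the $p$-Laplacian, $P(u)=0$ from the Nehari and Pohozaev identities of the limit equation, $\mu>0$ from Lemma \ref{lemma_xin}, exclusion of mass loss via Lemma \ref{lem3.4}, and the Gagliardo--Nirenberg endgame for strong convergence. The genuine gap is one line in Step 3: the claim $P(u_n)=P(u)+P(v_n)+o(1)$, from which you deduce $P(v_n)\to0$ and then the lower bound $I(v_n)\ge\frac{1}{p_c+p}\Vert v_n\Vert^p_{L^p(\mathbb{R}^N)}+o(1)$. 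Splitting $P$ requires a Brezis--Lieb decomposition of $\int_{\mathbb{R}^N}\widetilde F(u_n)\,dx$, equivalently of $\int_{\mathbb{R}^N}f(u_n)u_n\,dx$, and this is precisely what is \emph{not} available under $(f0)$--$(f5)$. Lemma \ref{lem2.6} covers $\int F$ because $F\in C^1$ with $F'=f$ obeying the growth bound \eqref{eq(*)}; its proof runs through $F(a+b)-F(a)=\int_0^1 f(a+\theta b)b\,d\theta$. By contrast $\widetilde F(t)=f(t)t-2F(t)$ is only continuous ($f$ is not assumed differentiable), so no mean value argument is available for it, and continuity plus growth bounds alone do not yield the Brezis--Lieb hypothesis $\vert\widetilde F(a+b)-\widetilde F(a)\vert\le\varepsilon\varphi(a)+\psi_\varepsilon(b)$. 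The paper itself flags this in Remark \ref{re4.1}: the computation you perform is essentially the one given there, and it is stated to require the \emph{additional} assumptions that $\widetilde F\in C^1$ and $\widetilde F'$ satisfies \eqref{eq(*)}.

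The paper proves $\lim_n I(v_n)\ge0$ without ever splitting $P$. It argues by contradiction: if $\lim_n I(v_n)<0$, then $\{v_n\}$ is non-vanishing, so after new translations a second profile $w^2\ne0$ appears as a weak limit; crucially $P(w^2)=0$ is obtained from the limiting equation satisfied by $w^2$ (Nehari plus Pohozaev identities), not from any decomposition of $P(u_n)$, and $I(w^2)>0$ by Lemma \ref{lem2.5}(iii). Subtracting $w^2$ strictly decreases the (negative) limiting energy, so the extraction can be iterated indefinitely, producing infinitely many profiles $w^k$ with $P(w^k)=0$; each satisfies $\Vert\nabla w^k\Vert_{L^p(\mathbb{R}^N)}\ge\delta>0$ by Remark \ref{re2.1}, contradicting $\sum_k\Vert\nabla w^k\Vert^p_{L^p(\mathbb{R}^N)}\le\lim_n\Vert\nabla u_n\Vert^p_{L^p(\mathbb{R}^N)}<+\infty$. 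To repair your proposal under the stated hypotheses, replace the $P$-splitting in Step 3 by this iterative profile-decomposition argument; the rest of your proof (including the identity $E_m=I(u)+\lim_n I(v_n)$, which only uses the splittings of the gradient term, the $L^p$ term and $\int F$, all of which you have) then goes through unchanged.
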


\begin{proof}
Since $\{u_n\}\subset S_m$ is bounded in $\mathscr{X}$, without loss of generality,  following limits exist
\begin{equation*}
\lim\limits_{n\to+\infty}{\Vert\nabla u_n\Vert}_{L^p(\mathbb{R}^N)}, \ \lim\limits_{n\to+\infty}{\Vert u_n\Vert}_{L^p(\mathbb{R}^N)}, \ \lim\limits_{n\to+\infty}\int_{\mathbb{R}^N}F(u_n)dx, \ \lim\limits_{n\to+\infty}\int_{\mathbb{R}^N}f(u_n)u_ndx.
\end{equation*}
It follows from ${\Vert dI(u_n)\Vert}_{u_n,*}\to0$ and $\mathscr{X}\hookrightarrow L^2(\mathbb{R}^N)$ that $\mathscr{X}$ is a Hilbert space in the sense of $L^2(\mathbb{R}^N)$. By Lemma \ref{Lemma 3}, we have $Q(u_n):=dI(u_n)-\langle dI(u_n),u_n \rangle u_n\to0$ in $\mathscr{X}^*$ as $n\to\infty$. Then for any $v\in\mathscr{X}$, it holds
\begin{equation*}
     \int_{\mathbb{R}^N}\vert\nabla u_n\vert^{p-2}\nabla u_n\cdot\nabla vdx+\int_{\mathbb{R}^N}\vert u_n\vert^{p-2}u_nvdx+\mu_n\int_{\mathbb{R}^N}u_nvdx-\int_{\mathbb{R}^N}F(u_n)vdx\to0,
\end{equation*}
For convenience, we rewrite above limit as follows
\begin{equation*}
-\Delta_{p}u_n+{\vert u_n\vert}^{p-2}u_n+\mu_nu_n-f(u_n)\to0\text{ in } \mathscr{X}^*,
\end{equation*}
where
\begin{equation*}
     \mu_n:=-\langle dI(u_n), u_n \rangle=\frac{1}{m}\left(\int_{\mathbb{R}^N}f(u_n)u_ndx-\int_{\mathbb{R}^N}\vert\nabla u_n\vert^pdx-\int_{\mathbb{R}^N}\vert u_n\vert^pdx\right).
\end{equation*}
Since $\{u_n\}$ is bounded in $\mathscr{X}$, then $\mu_n\to\mu$ as $n\to\infty$ for some $\mu\in\mathbb{R}$. Moreover, we have
\begin{equation}\label{eq4.2}
-\Delta_{p}u_n(\cdot+y_n)+{\vert u_n(\cdot+y_n)\vert}^{p-2}u_n(\cdot+y_n)+\mu u_n(\cdot+y_n)-f(u_n(\cdot+y_n))\to0\text{ in }\mathscr{X}^*
\end{equation}
for any $\{y_n\}\subset\mathbb{R}^N$.

Firstly, we claim that $\{u_n\}$ is non-vanishing. Indeed, if $\{u_n\}$ is vanishing, then Lemma \ref{Lemma I.1} implies that $u_n\to0$ in ${L^{p_*}(\mathbb{R}^N)}$. Using Lemma \ref{lem2.1}(ii) and the fact that $P(u_n)\to0$ as $n\to\infty$, we have $\int_{\mathbb{R}^N}F(u_n)dx\to0$ as $n\to\infty$ and
\begin{equation*}
\frac{p_c+p}{p}\int_{\mathbb{R}^N}{\vert\nabla u_n\vert}^pdx+\frac{p_c}{p}\int_{\mathbb{R}^N}{\vert u_n\vert}^pdx=P(u_n)+\frac{N}{2}\int_{\mathbb{R}^N}\widetilde{F}(u_n)dx\to0\text{ as }n\to\infty,
\end{equation*}
which implies that $\int_{\mathbb{R}^N}{\vert\nabla u_n\vert}^pdx\to0$ and $\int_{\mathbb{R}^N}{\vert u_n\vert}^pdx\to0$ as $n\to\infty$. Hence, we have
\begin{equation*}
E_m=\lim\limits_{n\to+\infty}I(u_n)=\frac{1}{p}\lim\limits_{n\to+\infty}\int_{\mathbb{R}^N}{\vert\nabla u_n\vert}^pdx+\frac{1}{p}\lim\limits_{n\to+\infty}\int_{\mathbb{R}^N}{\vert u_n\vert}^pdx-\lim\limits_{n\to+\infty}\int_{\mathbb{R}^N}F(u_n)dx=0,
\end{equation*}
which contradicts with $E_m>0$ and thus the claim is proved.

Since $\{u_n\}$ is non-vanishing, there exists $\{y^1_n\}\subset\mathbb{R}^N$ and $w^1\in B_m\backslash\{0\}$ such that $u_n(\cdot+y^1_n)\rightharpoonup w^1$ in $\mathscr{X}$, $u_n(\cdot+y^1_n)\to w^1$ in $L^q_\text{loc}(\mathbb{R}^N)$ for any $q\in[1,p^*)$ and $u_n(\cdot+y^1_n)\to w^1$ a.e. in $\mathbb{R}^N$. It follows from \eqref{eq(*)} in Lemma \ref{lem2.6} and  Theorem A.I. in \cite{LionsARMAI1983} that
\begin{equation}\label{eq(**)}
\lim\limits_{n+\infty}\int_{\mathbb{R}^N}\left\vert [f(u_n(\cdot+y^1_n))-f(w^1)]\varphi\right\vert dx\leq{\Vert\varphi\Vert}_{L^\infty(\mathbb{R}^N)}\lim\limits_{n+\infty}\int_{\text{spt}(\varphi)}\left\vert f(u_n(\cdot+y^1_n))-f(w^1)\right\vert dx=0
\end{equation}
for any $\varphi\in C^\infty_0(\mathbb{R}^N)$. From \eqref{eq4.2}, we further have
\begin{equation}\label{eq4.3}
-\Delta_{p}w^1+{\vert w^1\vert}^{p-2}w^1+\mu w^1=f(w^1).
\end{equation}
According to the Nehari identity and the Pohozaev identity corresponding to \eqref{eq4.3}, we have $P(w^1)=0$. Define $v^1_n:=u_n-w^1(\cdot-y^1_n)$ for each $n\in\mathbb{N}^+$, then $v^1_n(\cdot+y^1_n)\rightharpoonup0$ in $\mathscr{X}$ and
\begin{equation}\label{eq4.4}
m=\lim\limits_{n\to+\infty}{\Vert v^1_n(\cdot+y^1_n)+w^1\Vert}^2_{L^2(\mathbb{R}^N)}={\Vert w^1\Vert}^2_{L^2(\mathbb{R}^N)}+\lim\limits_{n\to+\infty}{\Vert v^1_n\Vert}^2_{L^2(\mathbb{R}^N)}.
\end{equation}

\textbf{Claim.} Still using the above symbols, we have the following equation
\begin{equation*}
\lim\limits_{n\to+\infty}{\Vert v^1_n(\cdot+y^1_n)+w^1\Vert}^p_{L^p(\mathbb{R}^N)}={\Vert w^1\Vert}^p_{L^p(\mathbb{R}^N)}+\lim\limits_{n\to+\infty}{\Vert v^1_n\Vert}^p_{L^p(\mathbb{R}^N)},
\end{equation*}
\begin{equation*}
\lim\limits_{n\to+\infty}{\Vert\nabla\left(v^1_n(\cdot+y^1_n)+w^1\right)\Vert}^p_{L^p(\mathbb{R}^N)}={\Vert\nabla w^1\Vert}^p_{L^p(\mathbb{R}^N)}+\lim\limits_{n\to+\infty}{\Vert\nabla v^1_n\Vert}^p_{L^p(\mathbb{R}^N)}.
\end{equation*}
We only need to prove that $\nabla u_n(\cdot+y^1_n)\to\nabla w^1 $ a.e. in $\mathbb{R}^N$.
Inspired by Lemma 2.1 in \cite{AlvesJMP2014}, there exists a function $\psi\in C^\infty_0(\mathbb{R}^N)$ such that $0\leq\psi(x)\leq1$ for every $x\in\mathbb{R}^N$ and
\begin{equation*}
     \psi(x)=\left\{
     \begin{aligned}
          &1, \quad \text{ if } \vert x\vert\leq1,\\
          &0, \quad \text{ if } \vert x\vert\geq2,
     \end{aligned}
     \right.
\end{equation*}
Define $\psi_R(x):=\psi(\frac{x}{R})$ for all $x\in\mathbb{R}^N$ and $R>0$. After some calculations, we have
\begin{equation*}
     \begin{aligned}
          T_n &=\langle Q(u_n), (u_n-w^1)\psi_R \rangle+\int_{\mathbb{R}^N}{\vert\nabla u_n\vert}^{p-2}\nabla u_n\cdot\nabla\psi_R(u_n-w^1)dx\\
          & +\int_{\mathbb{R}^N}{\vert u_n\vert}^{p-2}u_n(u_n-w^1)\psi_Rdx-\int_{\mathbb{R}^N}{\vert\nabla w^1\vert}^{p-2}\nabla w^1\cdot\nabla(u_n-w^1)\psi_Rdx\\
          & +\int_{\mathbb{R}^N}f(u_n)(u_n-w^1)\psi_Rdx+\mu_n\int_{\mathbb{R}^N}u_n(u_n-w^1)\psi_Rdx,
     \end{aligned}
\end{equation*}
where
\begin{equation*}
     \begin{aligned}
          T_n &:=\int_{\mathbb{R}^N}\langle {\vert\nabla u_n\vert}^{p-2}\nabla u_n-{\vert\nabla w^1\vert}^{p-2}\nabla w^1, \nabla u_n-\nabla w^1 \rangle\psi_Rdx\\
          & +\int_{\mathbb{R}^N}\left({\vert u_n\vert}^{p-2}u_n-{\vert w^1\vert}^{p-2}w^1\right)\left(u_n-w^1\right)\psi_Rdx.
     \end{aligned}
\end{equation*}
Thanks to \eqref{eq(**)} and the fact that $\lim_{n\to\infty}Q(u_n)=0$ in $\mathscr{X}^*$, we have
\begin{equation*}
\int_{\mathbb{R}^N}f(u_n)(u_n-w^1)\psi_Rdx=o_n(1)\text{ and }\langle Q(u_n),(u_n-w^1)\psi_R \rangle=o_n(1).
\end{equation*}
By Sobolev compact embeddings $\mathscr{X}\hookrightarrow L^q_{\text{loc}}(\mathbb{R}^N)$ for any $q\in[1,p^*)$ and weak convergence $u_n(\cdot+y^1_n)\rightharpoonup w^1$ in $\mathscr{X}$, we have
\begin{equation*}
     \begin{aligned}
          \int_{\mathbb{R}^N}{\vert\nabla u_n\vert}^{p-2}\nabla u_n\cdot\nabla\psi_R(u_n-w^1)dx=o_n(1), \ \ \int_{\mathbb{R}^N}{\vert u_n\vert}^{p-2}u_n(u_n-w^1)\psi_Rdx=o_n(1),\\
          \mu_n\int_{\mathbb{R}^N}u_n(u_n-w^1)\psi_Rdx=o_n(1), \ \ \int_{\mathbb{R}^N}{\vert\nabla w^1\vert}^{p-2}\nabla w^1\cdot\nabla(u_n-w^1)\psi_Rdx=o_n(1).
     \end{aligned}
\end{equation*}
Then we have $T_n=o_n(1)$. According to \cite{Damascelli1998}, following inequality holds
\begin{equation*}
     \langle {\vert\eta\vert}^{p-2}\eta-{\vert\xi\vert}^{p-2}\xi, \eta-\xi \rangle\geq\left\{
     \begin{aligned}
          &C_1\vert{\eta-\xi}\vert^p, \qquad\qquad\qquad\  \text{ if } p\geq2,\\
          &C_2{(\vert\eta\vert+\vert\xi\vert)}^{p-2}{\vert\eta-\xi\vert}^2, \quad \text{ if } 1<p<2,
     \end{aligned}
     \right.
\end{equation*}
where constants $C1$, $C2>0$. Hence, up to a subsequence, we have $
\nabla u_n(\cdot+y^1_n)\to\nabla w^1$ a.e. in $B(0,R)$. By the arbitrariness of $R$ and up to a subsequence, we have
\begin{equation*}
\nabla u_n(\cdot+y^1_n)\to\nabla w^1 \ \text{ a.e. in } \mathbb{R}^N.
\end{equation*}
Since $\{u_n\}$ is bounded in $\mathscr{X}$, $u_n(\cdot+y^1_n)\to w^1$ a.e. in $\mathbb{R}^N$ and $\nabla u_n(\cdot+y^1_n)\to\nabla w^1$ a.e. in $\mathbb{R}^N$, by the Brezis-Lieb lemma in \cite{Willem1997}, we have completed the proof of the Claim.

Using Lemma \ref{lem2.6}, we have
\begin{equation*}
\lim\limits_{n\to+\infty}\int_{\mathbb{R}^N}F(u_n(\cdot+y^1_n))dx=\int_{\mathbb{R}^N}F(w^1)dx +\lim\limits_{n\to+\infty}\int_{\mathbb{R}^N}F(v^1_n(\cdot+y^1_n))dx.
\end{equation*}
We can combine this with the Claim to obtain that
\begin{equation}\label{eq4.5}
E_m=\lim\limits_{n\to+\infty}I(u_n)=\lim\limits_{n\to+\infty}I(u_n(\cdot+y^1_n))=I(w^1) +\lim\limits_{n\to+\infty}I(v^1_n(\cdot+y^1_n))=I(w^1)+\lim\limits_{n\to+\infty}I(v^1_n).
\end{equation}

We next show $\lim_{n\to+\infty}I(v^1_n)\geq0$. By contradiction we assume that $\lim_{n\to+\infty}I(v^1_n)<0$. Thus $\{v^1_n\}$ is non-vanishing. Up to a subsequence, there exists $\{y^2_n\}\subset\mathbb{R}^N$ such that
\begin{equation*}
\lim\limits_{n\to+\infty}\int_{B(y^2_n,1)}{\vert v^1_n\vert}^pdx>0.
\end{equation*}
Since $v^1_n(\cdot+y^1_n)\to0$ as $n\to\infty$ in $L^p_\text{loc}(\mathbb{R}^N)$, then $\vert y^2_n-y^1_n\vert\to+\infty$. Up to a subsequence, there exists some $w^2\in B_m\backslash\{0\}$ such that $v^1_n(\cdot+y^2_n)\rightharpoonup w^2$ in $\mathscr{X}$. Thanks to \eqref{eq4.2} and
\begin{equation*}
u_n(\cdot+y^2_n)=v^1_n(\cdot+y^2_n)+w^1(\cdot-y^1_n+y^2_n)\rightharpoonup w^2 \ \text{ in } \mathscr{X},
\end{equation*}
we have $P(w^2)=0$ and $I(w^2)>0$. Set $v^2_n:=v^1_n-w^2(\cdot-y^2_n)=u_n-\sum^2_{i=1}w^i(\cdot-y^i_n)$, then
\begin{equation*}
\lim\limits_{n\to+\infty}{\Vert v^2_n\Vert}^p_{L^p(\mathbb{R}^N)}=\lim\limits_{n\to+\infty}{\Vert u_n\Vert}^p_{L^p(\mathbb{R}^N)}-\sum^2_{i=1}{\Vert w^i\Vert}^p_{L^p(\mathbb{R}^N)},
\end{equation*}
\begin{equation*}
\lim\limits_{n\to+\infty}{\Vert\nabla v^2_n\Vert}^p_{L^p(\mathbb{R}^N)}=\lim\limits_{n\to+\infty}{\Vert\nabla u_n\Vert}^p_{L^p(\mathbb{R}^N)}-\sum^2_{i=1}{\Vert\nabla w^i\Vert}^p_{L^p(\mathbb{R}^N)},
\end{equation*}
which leads to
\begin{equation*}
0>\lim\limits_{n\to+\infty}I(v^1_n)=I(w^2)+\lim\limits_{n\to+\infty}I(v^2_n)>\lim\limits_{n\to+\infty}I(v^2_n).
\end{equation*}
Repeating above process, we can find an infinite sequence $\{w^k\}\subset B_m\backslash\{0\}$ such that $P(w^k)=0$ and
\begin{equation*}
\sum^{k}_{i=1}{\Vert\nabla w^i\Vert}^p_{L^p(\mathbb{R}^N)}\leq\lim\limits_{n\to+\infty}{\Vert\nabla u_n\Vert}^p_{L^p(\mathbb{R}^N)}<+\infty \ \text{ for any } k\in\mathbb{N}^+,
\end{equation*}
which is impossible. It follows from Remark \ref{re2.1} that there exists $\delta>0$ such that ${\Vert\nabla w\Vert}_{L^p(\mathbb{R}^N)}\geq\delta$ for any $w\in B_m\backslash\{0\}$ with $P(w)=0$. Thus, it holds $\lim_{n\to+\infty}I(v^1_n)\geq0$.

Set $z:={\Vert w^1\Vert}^2_{L^2(\mathbb{R}^N)}\in(0,m]$. Since $\lim_{n\to+\infty}I(v^1_n)\geq0$ and $w^1\in\mathcal{P}_z$, \eqref{eq4.5} implies that
\begin{equation*}
E_m=I(w^1)+\lim_{n\to+\infty}I(v^1_n)\geq I(w^1)\geq E_z.
\end{equation*}
Using Lemma \ref{lem3.2}, we know that $E_m$ is nonincreasing for $m>0$. Thus we have
\begin{equation}\label{eq4.6}
I(w^1)=E_z=E_m
\end{equation}
and
\begin{equation}\label{eq4.7}
\lim_{n\to+\infty}I(v^1_n)=0.
\end{equation}
It follows from \eqref{eq4.3}, \eqref{eq4.6} and Lemma \ref{lem3.4} that $\mu\geq0$. Moreover, thanks to Lemma \ref{lemma_xin}, there exists $m_0>0$ sufficiently small such that $\Vert u_0\Vert_{L^2}^2=m<m_0$, then $\mu>0$. If $z\in(0,m)$,  Lemma \ref{lem3.4} and \eqref{eq4.3} imply that $I(w^1)=E_z>E_m$, which contradicts with \eqref{eq4.6}. Hence $z={\Vert w^1\Vert}^2_{L^2(\mathbb{R}^N)}=m$ and then ${\Vert v_n^1\Vert}^2_{L^2(\mathbb{R}^N)}\to0$ as $n\to\infty$ by \eqref{eq4.4}. Note that $w^1\in\mathscr{X}$ and ${\Vert v_n^1\Vert}^2_{L^2(\mathbb{R}^N)}\to0$ as $n\to\infty$, then $v_n^1\to0$ in $L^{p_*}(\mathbb{R}^N)$ as $n\to\infty$ by Lemma \ref{Lemma I.1}. By Lemma \ref{lem2.1} (ii), we further have $\lim_{n\to+\infty}\int_{\mathbb{R}^N}F(v_n^1)dx=0$, then  ${\Vert v_n\Vert}_{W^{1,p}(\mathbb{R}^N)}\to0$ as $n\to\infty$ by \eqref{eq4.7}. Thus, $u_n(\cdot+y_n^1)\to w^1$ strongly in $\mathscr{X}$. This completes the proof.
\end{proof}

\begin{remark}\label{re4.1}
If function $f$ satisfies some stronger conditions, the proof of $\lim_{n\to+\infty}I(v_n^1)\geq0$ can be simplified. For instance, in addition to $(f0)-(f4)$, we further assume that $\widetilde{F}$ is a $C^1$ function and $\widetilde{F}'$ satisfies \eqref{eq(*)} in Lemma \ref{lem2.6}. Note that $P(w^1)=0$, similar to the proof of \eqref{eq4.5}, we have
\begin{equation*}
0=\lim\limits_{n\to+\infty}P(u_n) =P(w^1)+\lim\limits_{n\to+\infty}P(v_n^1)=\lim\limits_{n\to+\infty}P(v_n^1).
\end{equation*}
Therefore, it follows from Lemma \ref{lem2.3} that
\begin{equation*}
     \begin{aligned}
          \lim\limits_{n\to+\infty}\frac{p_c+p}{p}I(v_n^1) &=\lim\limits_{{n\to+\infty}}\left(\frac{p_c}{p}I(v_n^1)+I(v_n^1)-\frac{1}{p}P(v_n^1)\right)\\ &=\frac{N}{2p}\lim\limits_{n\to+\infty}\int_{\mathbb{R}^N}\left[f(v_n^1)v_n^1-p_*F(v_n^1)\right]dx\geq0.
     \end{aligned}
\end{equation*}
\end{remark}
With the help of Lemmas \ref{lem4.1} and \ref{lem4.6}, we now complete the proof of Theorem \ref{theo1.1}.

\noindent\textit{The proof of Theorem 1.1.}
By Lemma \ref{lem4.1} and Lemma \ref{lem2.5}(iv), for the constrained functional $I|_{S_m}$ we can obtain a bounded Palais-Smale sequence $\{u_n\}\subset\mathcal{P}_m$ at the level $E_m>0$.

(i) If the condition $(f5)$ holds, then there exists $m_0>0$ small enough such that $m\in(0,m_0)$. By Lemma \ref{lem4.6}, the proof of the existence of a ground state $u\in S_m$ at the level $E_m$ is obvious.

(ii) If $f$ is odd, then by Lemma \ref{lem4.1} implies that $\Vert u_n^-\Vert_{L^2(\mathbb{R}^N)}\to0$ as $n\to\infty$. Using Lemma \ref{lem4.6}, $\lim_{n\to\infty}\Vert u_n^-\Vert_{L^2(\mathbb{R}^N)}=0$ implies $u\geq0$, and we obtain a nonnegative ground state $u\in S_m$ at the level $E_m$. $\hfill\square$
\\ \hspace*{\fill}

Next, we give the proof of Theorem \ref{theo1.2}.

\noindent\textit{The proof of Theorem \ref{theo1.2}}.
We first show $E_m$ is strictly increasing for $m>0$. Due to Theorem \ref{theo1.1}, $E_m$ is reached by a ground state of \eqref{(P_m)} with the associated Lagrange multiplier $\mu>0$. Thus, by Lemma \ref{lem3.4}, the function $m\to E_m$ is strictly decreasing on $(0,+\infty)$. Thanks to Lemmas \ref{lem2.5}, \ref{lem3.1}, \ref{lem3.2}, \ref{lem3.5}, the rest of proof of Theorem \ref{theo1.2} can be easily proved.
 $\hfill\square$
\\ \hspace*{\fill}

\bibliographystyle{plain}

\end{document}